\documentclass[11pt, reqno]{amsart}

\usepackage{graphicx}
\usepackage{cite}
\usepackage{color}
\usepackage{amsfonts}
\usepackage{amsthm}
\usepackage{url}
\usepackage{amsmath}
\usepackage{amssymb}
\usepackage{enumerate}
\sloppy
\usepackage{subfig}
\usepackage{fixltx2e}
\usepackage{dsfont}
\usepackage{pseudocode}
\usepackage{mathrsfs}
\usepackage[margin=1in]{geometry}

\theoremstyle{plain}
\newtheorem{theorem}{Theorem}[section]
\newtheorem{lemma}[theorem]{Lemma}
\newtheorem{corollary}[theorem]{Corollary}

\newtheorem{proposition}[theorem]{Proposition}

\theoremstyle{definition}
\newtheorem{definition}[theorem]{Definition}
\newtheorem{example}[theorem]{Example}
\newtheorem{remark}[theorem]{Remark}

\theoremstyle{remark}

\newcommand{\EE}{\mathbb{E}}
\newcommand{\R}{\mathbb{R}}

\newcommand{\HS}{\mathrm{HS}}

\newcommand{\Lc}{\Lambda_{\mathbf{c}}}
\newcommand{\Ld}{\Lambda_{\mathbf{d}}}
\newcommand{\Cov}{\operatorname{Cov}}

\newcommand{\Var}{\operatorname{Var}}
\newcommand{\Tr}{\operatorname{tr}}
\renewcommand{\to}{\longrightarrow}
%
%
%

\begin{document}

\title[Euclidean forward-reverse Brascamp-Lieb inequalities]{Euclidean forward-reverse Brascamp-Lieb inequalities:\\finiteness, structure  and extremals}
\author[T.~A.~Courtade]{Thomas~A.~Courtade}
\address{University of California, Berkeley \\ Department of Electrical Engineering and Computer Sciences \\ Berkeley, CA 94720, USA}
\email{courtade@berkeley.edu}

\thanks{This work was  supported by NSF grants CCF-1704967, CCF-0939370 and CCF-1750430.}

\author[J.~Liu]{Jingbo Liu}
\address{Massachusetts Institute of Technology \\ Institute for Data, Systems, and Society (IDSS) \\ Cambridge, MA 02139, USA}
\email{jingbo@mit.edu}

\subjclass[2010]{Primary 26D15, 39B72;  Secondary 47G10}

\begin{abstract} 
A new proof is given for the fact that centered gaussian functions saturate the   
Euclidean forward-reverse Brascamp-Lieb inequalities, extending the Brascamp-Lieb and Barthe theorems.  A  duality principle for best constants is also developed, which generalizes the fact that the best constants in the Brascamp-Lieb and Barthe inequalities are equal.  Finally, as the title hints, the main results concerning finiteness, structure and gaussian-extremizability for the Brascamp-Lieb inequality due to  Bennett, Carbery, Christ and Tao are generalized to the setting of the forward-reverse Brascamp-Lieb inequality. 
\end{abstract}

\maketitle 

\section{Introduction and Main Results}
We begin with notation that will prevail throughout.  Let $(E_i)_{1\leq i\leq k}$ and $(E^j)_{1\leq j\leq m}$ be Euclidean spaces, i.e., finite-dimensional Hilbert spaces endowed with Lebesgue measure and the usual  inner product $\langle \cdot, \cdot \rangle$ giving rise to Euclidean length $|\cdot|$.   We write $E_0 = \bigoplus_{i=1}^k E_i$, and let $\pi_{E_i} : E_0 \to E_i$ be the  orthogonal  projection  of $E_0$ onto $E_i$.  %

Let $\mathbf{B}:=(B_{ij})_{1\leq i\leq k,  1\leq j \leq m}$, where each $B_{ij}: E_i \to E^j$ is a bounded linear transformation.   Because it will be referred to frequently,  we define $B_j : E_0 \to E^j$ according to 
$$
B_j x := \sum_{i=1}^k B_{ij}\pi_{E_i}(x),   ~~~~x\in E_0.
$$ 
Note that the collection $(B_j)_{1\leq j\leq m}$ may be regarded as an equivalent characterization of  $\mathbf{B}$.  
We define $\mathbf{B}^*:=(B^*_{ij} )_{1\leq i\leq k,  1\leq j \leq m}$, where $A^*$ denotes the adjoint of $A$. 

We  let $\mathbf{c}:=(c_i)_{1\leq i\leq k}$ and $\mathbf{d}:=(d_j)_{1\leq j\leq m}$ be collections of positive real numbers satisfying 
\begin{align}
\sum_{i=1}^k c_i \dim(E_i) = \sum_{j=1}^m d_j \dim(E^j) , \label{eq:consistency}
\end{align}
and we refer to the triple $(\mathbf{c},\mathbf{d},\mathbf{B})$ as a \emph{datum}.  Finally, $\R^+$ denotes the non-negative real numbers. 

\subsection{The forward-reverse Brascamp-Lieb inequalities}

For a given datum $(\mathbf{c},\mathbf{d},\mathbf{B})$, this paper is concerned with characterizing the best constant $D$ in the following statement:
If measurable functions $f_i : E_i \to \R^+$,  $1\leq i \leq k$ and $g_j : E^j \to \R^+$,  $1\leq j\leq m$  satisfy 
\begin{align}
\prod_{i=1}^k f_i^{c_i}(x_i) \leq \prod_{j=1}^m g_j^{d_j}\left( \sum_{i=1}^k c_i B_{ij} x_i \right)\hspace{1cm}\forall x_i\in E_i,~ 1\leq i\leq k,\label{FRBLhypIntro}
\end{align}
then 
\begin{align}
\prod_{i=1}^k \left( \int_{E_i} f_i  \right)^{c_i} \leq e^D \prod_{j=1}^m \left( \int_{E^j} g_j  \right)^{d_j},\label{FRBLintro}
\end{align}
where the integrals are with respect to Lebesgue measure on the respective spaces. 
To facilitate later referencing, we make a formal definition.
\begin{definition}\label{def:DcdB}
Given a datum $(\mathbf{c},\mathbf{d},\mathbf{B})$, we define $D(\mathbf{c},\mathbf{d},\mathbf{B})$ to be the smallest constant $D$ such that \eqref{FRBLintro} holds for all nonnegative measurable functions satisfying the constraints \eqref{FRBLhypIntro}.
\end{definition}

\begin{remark}
  If \eqref{eq:consistency} does not hold, then dilating all functions by a common factor shows $D(\mathbf{c},\mathbf{d},\mathbf{B}) =+\infty$, motivating the assumption. It is easy to see that $D(\mathbf{c},\mathbf{d},\mathbf{B}) >-\infty$. 
\end{remark}

The above class of  inequalities was introduced by the authors together with Cuff and Verd\'u, and termed  \emph{Forward-Reverse Brascamp-Lieb inequalities} \cite{liu2018forward}.  This choice of terminology reflects the observation that taking $k=1$ and $c_1 = 1$  specializes to the  classical (forward, or direct) Brascamp-Lieb inequalities \cite{brascamp1974general, brascamp1976best, lieb1990gaussian};  on the other hand, taking $m=1$ and $d_1=1$ specializes to the reverse form of the Brascamp-Lieb inequalities introduced by Barthe \cite{barthe1998reverse}.   

The celebrated result of Lieb \cite{lieb1990gaussian} is that in the case $k=c_1=1$, the best constant $D(1,\mathbf{d},\mathbf{B})$ can be computed by considering only centered gaussian functions $f_1, g_1, \dots, g_m$. Likewise, Barthe  showed in  \cite{barthe1998reverse}  that in the case of $m=d_1=1$, the best constant $D(\mathbf{c},1,\mathbf{B})$ can be computed by considering only centered gaussian functions $f_1, \dots, f_k, g_1$.   Barthe also established a remarkable duality between the forward and reverse Brascamp-Lieb inequalities, in the sense that%
\begin{align}
D(\mathbf{c},1,\mathbf{B})=D(1,\mathbf{c},\mathbf{B^*}),\label{dualityBarthe}
\end{align}
 where, by Definition \ref{def:DcdB} applied to the datum $(\mathbf{d},\mathbf{c},\mathbf{B^*})$,  the quantity $D(\mathbf{d},\mathbf{c},\mathbf{B^*})$  denotes the smallest constant $D$ in the inequality 
 \begin{align*}
\prod_{j=1}^m \left( \int_{E^j} g_j  \right)^{d_j} \leq  e^D \prod_{i=1}^k \left( \int_{E_i} f_i  \right)^{c_i},
\end{align*}
 holding for all measurable functions 
 $f_i : E_i \to \R^+$,  $1\leq i\leq k$ and $g_j : E^j \to \R^+$,  $1\leq j \leq  m$  satisfying
\begin{align}
\prod_{j=1}^m g_j^{d_j}(y_j) \leq \prod_{i=1}^k f_i^{c_i} \left( \sum_{j=1}^m d_j B^*_{ij} y_j \right)\hspace{1cm}\forall y_j\in E^j,~1\leq j\leq m. \label{intro:revHyp}
\end{align}

 Perhaps surprisingly, the forward-reverse Brascamp-Lieb inequality suggests that there is no fundamental distinction between the traditional forward and reverse forms of the Brascamp-Lieb inequality. Indeed,  they are each  a particular instance of the inequality \eqref{FRBLintro} under the domination hypothesis \eqref{FRBLhypIntro} for an appropriate choice of datum.    Most importantly, the gaussian saturation property continues to hold for the forward-reverse Brascamp-Lieb inequality, as well as a full-fledged form of the duality relation \eqref{dualityBarthe}.  This both clarifies and unifies the general landscape of Euclidean Brascamp-Lieb-Barthe-type   inequalities and the duality they enjoy.  This is our first main result:
 \begin{theorem}\label{thm:FRBL}
 The quantities $D(\mathbf{c},\mathbf{d},\mathbf{B})$ and $D(\mathbf{d},\mathbf{c},\mathbf{B^*})$ can be computed by considering only centered gaussian functions $(f_i)_{1\leq i\leq k}$ and  $(g_j)_{1\leq j \leq m}$ in their respective definitions.  Moreover, it holds that 
\begin{align}
 D(\mathbf{c},\mathbf{d},\mathbf{B})=D(\mathbf{d},\mathbf{c},\mathbf{B^*}). \label{eq:DualIdentity}
 \end{align}
 \end{theorem}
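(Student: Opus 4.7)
The plan is to prove gaussian saturation by an optimal-transport argument in the spirit of Barthe, and then to derive the duality (\ref{eq:DualIdentity}) as a consequence of a finite-dimensional convex-duality statement at the gaussian level.

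\emph{Step 1 (Gaussian duality).} For centered gaussian inputs $f_i = e^{-\langle \mA_i x_i, x_i\rangle/2}$ and $g_j = e^{-\langle \mB_j y_j, y_j\rangle/2}$ with $\mA_i, \mB_j$ positive, the constraint (\ref{FRBLhypIntro}) reduces to a linear matrix inequality in $(\mA_i, \mB_j)$ and the objective $\sum_i c_i \log\int f_i - \sum_j d_j \log\int g_j$ becomes a linear combination of $\log\det$'s. The resulting finite-dimensional convex program has a Lagrangian dual --- via standard SDP-style duality together with the symmetry $(\mathbf{B}^*)^* = \mathbf{B}$ --- that is exactly the gaussian problem associated with datum $(\mathbf{d},\mathbf{c},\mathbf{B}^*)$, yielding $D_{\mathrm g}(\mathbf{c},\mathbf{d},\mathbf{B}) = D_{\mathrm g}(\mathbf{d},\mathbf{c},\mathbf{B}^*)$ at the gaussian level, where $D_{\mathrm g}$ denotes the restriction of Definition \ref{def:DcdB} to centered gaussian inputs.

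\emph{Step 2 (Gaussian saturation).} For arbitrary admissible $(f_i, g_j)$, let $S_i : E_i \to E_i$ be the Brenier map pushing a centered gaussian reference forward to the probability measure proportional to $f_i$. Substituting $x_i = S_i(u_i)$ in (\ref{FRBLhypIntro}), applying the Monge-Amp\`ere equation to express $f_i \circ S_i$ in terms of $\det DS_i$, and invoking the concavity of $\log\det$ on positive operators reduces (\ref{FRBLhypIntro}) to a pointwise matrix inequality in $(DS_i(u_i))_i$. Integrating against the gaussian reference and optimizing then bounds the functional extremum by its gaussian counterpart, yielding $D(\mathbf{c},\mathbf{d},\mathbf{B}) = D_{\mathrm g}(\mathbf{c},\mathbf{d},\mathbf{B})$; the analogous argument applied on the reverse side gives $D(\mathbf{d},\mathbf{c},\mathbf{B}^*) = D_{\mathrm g}(\mathbf{d},\mathbf{c},\mathbf{B}^*)$. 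Chaining these equalities through Step 1 delivers (\ref{eq:DualIdentity}).

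The main obstacle I anticipate is the simultaneous treatment of the forward functions $f_i$ and the reverse functions $g_j$ coupled through the linear maps $B_{ij}$. Barthe's original argument has only a single output function $g_1$, so transporting the $f_i$'s alone suffices; here, after the change of variables, the $g_j$'s must be absorbed via the pointwise hypothesis (\ref{FRBLhypIntro}), producing cross-terms on the image spaces $E^j$ that need to be recognized as gaussian integrals. Regularity issues with Brenier maps should be handled by routine smoothing of the $f_i$'s, and the consistency condition (\ref{eq:consistency}) ensures the correct homogeneity so that the resulting matrix inequality admits a gaussian extremizer. Establishing the SDP-style duality in Step 1 in full generality, with possible degeneracies requiring Slater-condition care, is a subsidiary technical point but should follow from standard perturbation arguments combined with the freedom to restrict to subspaces on which the relevant operators are nondegenerate.
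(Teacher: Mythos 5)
Your overall strategy (optimal transport for saturation, convex duality at the gaussian level) is viable in principle --- Barthe and Wolff carried out a transport proof of a formally equivalent statement in \cite{barthe2018positive} --- but it is genuinely different from the route taken here (Fenchel--Rockafellar duality plus Lehec's stochastic argument plus a critical-subspace decomposition), and as written it has two real gaps. First, Step 2 does not resolve the obstacle it names. With $k,m\geq 2$ neither the $f_i$'s nor the $g_j$'s can be solved for in terms of the others, and after substituting $x_i=S_i(u_i)$ the hypothesis \eqref{FRBLhypIntro} must be converted into $m$ simultaneous gaussian integrals on the spaces $E^j$, each receiving the image $\sum_i c_iB_{ij}S_i(u_i)$ of the \emph{same} variable $u=(u_1,\dots,u_k)$; the Jacobian bookkeeping that closes Barthe's argument for $m=1$ has no obvious analogue. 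The missing ingredient is structural: the correct gaussian reference on $E_0$ is not a product measure but carries a nontrivial coupling $\Pi\in\Pi(V_1^{-1},\dots,V_k^{-1})$ satisfying the first-order conditions \eqref{optimalityCondition}, and it is exactly these conditions that make the $m$ target constraints simultaneously compatible (in the present proof this is the norm comparison \eqref{Hnorms}, which is what allows the single combined drift to be admissible on every $E^j$). Extracting $\Pi$ requires the strong min--max identity of Theorem \ref{thm:FRdualQuadraticForms}; smoothing and concavity of $\log\det$ alone will not produce it. Second, your argument presupposes that gaussian extremizers exist to serve as reference measures and targets of the optimization. When the datum is not gaussian-extremizable --- which happens precisely when a critical product-form subspace exists --- one must decompose $E_0$ along such subspaces and induct on dimension (Section \ref{sec:Decomposability}); the proposal is silent on this case, and without it the claimed equality $D=D_g$ is only established for extremizable data.

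A smaller correction to Step 1: the Lagrangian dual of the gaussian program for $(\mathbf{c},\mathbf{d},\mathbf{B})$ is not the gaussian program for $(\mathbf{d},\mathbf{c},\mathbf{B}^*)$ but a coupling-maximization problem over $\Pi(K_1,\dots,K_k)$ (Theorem \ref{thm:equivFormsDg}). The identity $D_g(\mathbf{c},\mathbf{d},\mathbf{B})=D_g(\mathbf{d},\mathbf{c},\mathbf{B}^*)$ instead follows from an elementary Schur-complement bijection between feasible points: the constraint \eqref{operatorInequality} for $(\mathbf{c},\mathbf{d},\mathbf{B})$ with data $(V_i,U_j)$ holds if and only if the corresponding constraint for $(\mathbf{d},\mathbf{c},\mathbf{B}^*)$ holds with data $(U_j^{-1},V_i^{-1})$, and this exchange negates the objective. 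So the conclusion of Step 1 is correct and easy, but the mechanism is a change of variables, not SDP duality, and no Slater-type care is needed for it.
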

 \begin{remark}
 The sufficiency of considering gaussian functions for computing the constant $D(\mathbf{c},\mathbf{d},\mathbf{B})$ was already established in our previous work \cite[Theorem 2]{liu2018forward}.  As will be explained in Section \ref{sec:connections},  the gaussian saturation property is closely connected (in fact, formally equivalent) to a result announced  by Barthe and Wolff in the note \cite{barthe2014positivity}, and proved  in their recent followup work \cite{barthe2018positive}. 
The identity \eqref{eq:DualIdentity} has not been previously observed. 
 \end{remark}
\begin{remark}
The identity \eqref{eq:DualIdentity} explains the scaling of each $B_{ij}$ by $c_i$ in \eqref{FRBLhypIntro} and, similarly, the scaling of each $B^*_{ij}$ by $d_j$ in \eqref{intro:revHyp}.  If we were not after \eqref{eq:DualIdentity}, these scalar factors could be absorbed into the maps themselves without affecting the first claim of Theorem  \ref{thm:FRBL}.  %
\end{remark}

 There are now several independent proofs of the original Brascamp-Lieb and Barthe theorems.  Early proofs relied on rearrangement arguments \cite{brascamp1974general, brascamp1976best}, and Lieb appealed to rotational invariance of the extremizers \cite{lieb1990gaussian}.  Barthe came up with a clever optimal transport argument, and simultaneously proved both the forward and reverse inequality \cite{barthe1998reverse}, further establishing equality of best constants.  More recently, Lehec  \cite{lehec2014short} gave a probabilistic proof of both theorems using  a variational representation for functionals due to Bou\'e and Dupuis \cite{boue1998variational}. Semigroup techniques provide yet another avenue of proof; see Bennett, Carbery, Christ and Tao \cite{bennett2008brascamp}, or Carlen, Lieb and Loss \cite{carlen2004sharp}.  Our previous work \cite{liu2018forward} gave an information-theoretic proof of the gaussian saturation part of Theorem \ref{thm:FRBL} (therefore extending to the classical settings as well), by way of a doubling argument similar to that employed by  Geng and Nair in \cite{GengNair} for a different problem.  This doubling argument is similar in spirit to that given by Lieb \cite{lieb1990gaussian}, but it exploited an equivalent entropic representation of the problem.

As far as applications go, it is well-known that the Brascamp-Lieb inequalities imply many other classical inequalities in analysis and geometry, such as H\"older's inequality, Young's inequality, and the Loomis-Whitney inequality.  Likewise, Barthe's inequality contains, for example, the Pr\'ekopa-Leindler  and Brunn-Minkowski inequalities as special cases.  All of these implications and more are described  in Gardner's survey of the Brunn-Minkowski inequality, which places the Brascamp-Lieb, Barthe, and reverse Young inequalities atop a hierarchy of implications  \cite[Figure 1]{gardner2002brunn}, with none of the three evidently implying the others.  In the accompanying discussion, Gardner  asks whether stronger unifying inequalities await discovery; the content of Theorem \ref{thm:FRBL} may be regarded as an affirmative answer.  We have already described how the Brascamp-Lieb and Barthe inequalities may be immediately recognized as special cases of the forward-reverse Brascamp-Lieb inequality.  It turns out that the reverse Young inequality constitutes another instance of the forward-reverse Brascamp-Lieb inequality.  Further examples will be given in Section \ref{sec:connections}.

\begin{example} \label{ex:RevYoung}
Let $0<p,q,r \leq 1$ satisfy $\frac{1}{p} + \frac{1}{q} = 1 + \frac{1}{r}$.  For $\phi,\psi$ non-negative measurable functions on $\R^n$, the reverse Young inequality for convolutions asserts 
\begin{align}
\|\phi * \psi\|_{r}\geq C^n \|\phi\|_p \|\psi\|_q, \label{RYform1}
\end{align}
where $\|h\|_p := \left(\int_{\R^n} |h|^p dx\right)^{1/p}$ for $h: \R^n\to \R$ and $p\in \R$.  The sharp constant is given by $C = C_p C_q/C_r$, with 
$$
C^2_s := \frac{|s|^{1/s}}{|s'|^{1/s'}}
$$
for $1/s + 1/s' = 1$ (i.e., $s$ and $s'$ are H\"older conjugates). 

We may assume $r<1$ henceforth, else if $r=1$, then we must have $p=q=1$, and the claim is trivial.  Under this assumption, it is easily verified using the reverse H\"older inequality and renaming functions, that \eqref{RYform1} is equivalent to
\begin{align}
\iint f_1^{1/p}(x-y) f_2^{1/q}(y) g_1^{1/r'} (x) dx dy  \geq C^n  \left(  \int_{\R^n} f_1    \right)^{1/p}    \left(  \int_{\R^n} f_2     \right)^{1/q}   \left(  \int_{\R^n} g_1     \right)^{1/r'} ,  \label{revYoungEquiv}
\end{align}
holding for  $f_1,f_2,g_1$ non-negative measurable functions on $\R^n$.

To place this into the framework of the forward-reverse Brascamp-Lieb inequality, let $E_1 = E_2 = E^1 = \R^n$ and $E^2=\R^{2n}$, with $c_1 = 1/p$, $c_2 = 1/q$, $d_1 = 1/r-1$ and $d_2 = 1$.  Further, let $\mathbf{B}$ be such that 
$$
\sum_{i=1}^2 c_i B_{i1} x_i = x_1 + x_2; ~~~~ \sum_{i=1}^2 c_i B_{i2} x_i =(x_1,x_2), ~~\forall x_1,x_2\in \R^n.
$$
Then, the hypothesis \eqref{FRBLhypIntro} boils down to
\begin{align}
f_1^{1/p}(x_1) f_2^{1/q}(x_2) \leq g_1^{1/r-1}(x_1+x_2) g_2(x_1,x_2) ~~~~\forall x_1,x_2\in \R^n.
  \label{hypRevYoung}
\end{align}
For arbitrary functions $f_1,f_2,g_1$, the best function $g_2$ can be computed as
$$
g_2(x,y) :=  f_1^{1/p}(x-y) f_2^{1/q}(y) g_1^{1/r'}(x)  ,~~~x,y\in \R^n,
$$
where ``best" is in the sense that the RHS of \eqref{FRBLintro} is minimized subject to \eqref{hypRevYoung}.  On substituting this choice of $g_2$ into \eqref{FRBLintro} and rearranging, we we are left precisely with \eqref{revYoungEquiv}, with best constant necessarily characterized as $C^n = e^{-D(\mathbf{c},\mathbf{d},\mathbf{B}) }$, computed by considering only centered gaussian functions.   
\end{example}

The  relation \eqref{eq:DualIdentity} allows us to easily deduce an interesting ``dual"  to the reverse Young inequality, given in the following example.  Here, we emphasize that the term dual is meant in terms of \eqref{eq:DualIdentity}, which is the same sense in which the Brascamp-Lieb and Barthe inequalities are dual to one another.    It  bears a superficial resemblance to Maurey's property $(\tau)$ \cite{maurey1991} and functional  Santal\'o inequalities (e.g., \cite{artstein2005, lehecSantalo}).  
\begin{example}\label{ex:dualRevYoung}
Let $0<p,q,r \leq 1$   satisfy $\frac{1}{p} + \frac{1}{q} = 1 + \frac{1}{r}$, and let $(\mathbf{c},\mathbf{d},\mathbf{B})$ be the datum of the previous example that yields  the reverse Young inequality \eqref{revYoungEquiv}.  By considering the dual datum $(\mathbf{d},\mathbf{c},\mathbf{B}^*)$ and applying \eqref{eq:DualIdentity}, we  conclude after elementary simplification that
\begin{align}
\int_{\R^{2n}} w \leq K^n  \|f\|_p \|g\|_q \|h\|_{r'} ,\label{dualRevYoung}
\end{align}
for  nonnegative measurable functions $f,g,h,w$ satisfying
\begin{align}
 w(z_1,z_2)\leq \inf_{y\in \R^n }\Big(  f( z_1-y  )g( z_2-y  )  h(y) \Big), ~~z_1,z_2\in \R^n.\label{eq:wdef}
\end{align}
The sharp constant $K$ is given by $K = K_p K_q K_r$, where\footnote{If $s\in \{1,+\infty\}$, $K_s$ is defined by the limit $K_s := \lim_{t\rightarrow s} K_t$ to avoid indeterminate forms in the definition.} 
$$
K_s^2 :=  |s|^{1/s} |s'|^{1/s'}, ~~~0 <s \leq+\infty.
$$
The choice of functions
$$f(x) = e^{|x|^2/p'}, ~~g(x) = e^{ |x|^2/ q'}, ~~h(x) = e^{ |x|^2 /r  } $$
saturate the inequality \eqref{dualRevYoung} when $w$ is taken equal to the infimum in \eqref{eq:wdef}. 
\end{example}
\begin{remark}
 To be more precise, the instances in the above example where $r<1$ follow by \eqref{eq:DualIdentity}, and the exceptional case $r=1$ ($r'=-\infty$) can be easily checked directly. %
\end{remark}

\begin{remark}
In analogy to Example \ref{ex:dualRevYoung}, one can derive  a ``dual" Young inequality.  It formally reverses the inequality of the previous example:  If $p,q,r\geq 1$ satisfy $\frac{1}{p} + \frac{1}{q} = 1 + \frac{1}{r}$, then 
\begin{align}
\int_{\R^{2n}} w \geq K^n \|f\|_p \|g\|_q  \|h\|_{r'}\label{dualYoung}
\end{align}
for  nonnegative measurable functions $f,g,h,w$ satisfying
\begin{align}
 w(z_1,z_2)\geq \sup_{y\in \R^n }\Big(  f( z_1-y  )g( z_2-y  ) h(y) \Big), ~~z_1,z_2\in \R^n.\label{eq:w2def}
\end{align}
Since the standard Young  inequality is a special case of the Brascamp-Lieb inequality, its dual \eqref{dualYoung} is a special case of the Barthe inequality, and should therefore be regarded as already known.  In contrast, we do not know whether   \eqref{dualRevYoung} (or  equivalent) has appeared previously in the literature. 
\end{remark}

To close this section, let us remark briefly on the chief difficulty encountered in proving Theorem \ref{thm:FRBL} compared to the special cases corresponding to the Brascamp-Lieb and Barthe inequalities.  In the case of the direct Brascamp-Lieb inequalities, the function $f_1$ can be explicitly computed in terms of the $(g_j)_{1\leq j\leq m}$ (specifically, $f_1 = \prod_{j=1}^m (g_j\circ B_j)^{d_j}$).  In the reverse case, the function $g_1$ can be computed explicitly in terms of the $(f_i)_{1\leq i\leq k}$.  Such a simplification is not typically possible in the more general forward-reverse inequalities; this leads us to establish a rather  subtle duality principle, to be made precise in Theorem \ref{thm:equivFormsDg} (see Remark \ref{rmk:differenceFromClassical}).  Once this duality principle is established,  the structure of gaussian extremizers can be distilled, and techniques previously developed for proving the Brascamp-Lieb and Barthe inequalities can be successfully adapted to forward-reverse setting.

\subsection{Finiteness and gaussian-extremizability}
A motivation of the present paper is to better understand the structural properties of the gaussian extremizers in Theorem \ref{thm:FRBL} (when they exist), and to establish necessary and sufficient conditions for finiteness of   $D(\mathbf{c},\mathbf{d},\mathbf{B})$.  To this end, we give a new proof of Theorem \ref{thm:FRBL}, which combines ideas from Lehec's probabilistic proof of the Brascamp-Lieb and Barthe inequalities \cite{lehec2014short}, the structural viewpoint on the forward Brascamp-Lieb inequality developed by Bennett, Carbery, Christ and Tao \cite{bennett2008brascamp}, and the entropic duality of the forward-reverse Brascamp-Lieb inequality in \cite{liu2018forward}.   The detailed structural results, for example, allow us to easily identify ``geometric" instances of the forward-reverse Brascamp-Lieb inequality, which may be particularly useful in applications (see, e.g., Section \ref{sec:gaussianKernels}).%

Let us now  make precise the notions of gaussian-extremizability and extremizers that have been  alluded to above.  We will need some more notation, which will prevail throughout.  For a Euclidean space $E$, we let $S(E)$ denote the set of self-adjoint linear operators on $E$, and $S^+(E)$ denote the set of self-adjoint positive-definite linear operators on $E$.  That is, $A\in S^+(E)$ if $A\in S(E)$  and it further holds that $\langle A x, x\rangle>0 $ for all nonzero $x\in E$.   If $A\in S(E)$ and $\langle A x, x\rangle\geq0 $ for all  $x\in E$, we say that $A$ is positive-semidefinite.  For $A,B \in S(E)$, we write $A\geq B$ if $A-B$ is  positive-semidefinite.    Finally, for positive-semidefinite $A$, we let $A^{1/2}$ denote the unique positive-semidefinite $M$ such that $A= M^2$. 

A centered gaussian function (or kernel) $g: E\to \R^+$ is a function of the form 
$$
g(x) =   \exp\left(-\frac{1}{2}\langle A^{-1} x, x \rangle\right),  ~~~A \in S^+(E),
$$
where $A$ is said to be the \emph{covariance} of the gaussian kernel $g$.  We remark that a centered gaussian random vector on $E$ with covariance $A$ has density (with respect to Lebesgue measure on $E$) proportional to $g$.

Restricting attention to centered gaussian functions 
\begin{align*}
f_i &: x\in E_i  \longmapsto  \exp(-\langle V_i x, x \rangle),  ~~~V_i \in S^+(E_i), 1\leq i\leq k,\\
g_j &: x\in E^j \longmapsto  \exp(-\langle U_j x, x \rangle),  ~~~U_j \in S^+(E^j),1\leq j\leq m,
\end{align*}
and defining  
$$
\Lc := \bigoplus_{i=1}^k c_i \operatorname{id}_{E_i},
$$
we see that the hypothesis \eqref{FRBLhypIntro} boils down to 
\begin{align}
\sum_{j=1}^m d_j \langle U_j B_{j} \Lc x  , B_{j} \Lc x \rangle  \leq \sum_{i=1}^k c_i \langle V_i \pi_{E_i}(x), \pi_{E_i}(x) \rangle   \hspace{1cm}\forall x \in  E_0. \label{Qconstraint}
\end{align}
Additionally, using \eqref{eq:consistency}, we may compute
$$
\frac{\prod_{i=1}^k \left( \int_{E_i} f_i  \right)^{c_i} }{ \prod_{j=1}^m \left( \int_{E^j} g_j  \right)^{d_j} } = \frac{\prod_{j=1}^m \det(U_j)^{d_j/2}}{\prod_{i=1}^k \det(V_i)^{c_i/2}}.
$$
Collecting the above and comparing to \eqref{FRBLintro},  this motivates definition of the quantity
\begin{align}
D_g(\mathbf{c},\mathbf{d},\mathbf{B}) := \frac{1}{2}\sup \left( \sum_{j=1}^m d_j \log \det U_j - \sum_{i=1}^k c_i \log \det V_i\right), \label{defDg}
\end{align}
where the supremum is over all $V_i\in S^+(E_i),1\leq i\leq k$ and $U_j\in S^+(E^j), 1\leq j\leq m$ satisfying \eqref{Qconstraint}.  In words, $D_g(\mathbf{c},\mathbf{d},\mathbf{B})$ is the smallest possible constant $D$ in \eqref{FRBLintro}, holding for all gaussian kernels satisfying the constraints \eqref{FRBLhypIntro}.  
By definition, $D_g(\mathbf{c},\mathbf{d},\mathbf{B})\leq D(\mathbf{c},\mathbf{d},\mathbf{B})$.  The first part of Theorem \ref{thm:FRBL} asserts that this is always met with equality. 

\begin{remark}
Even if \eqref{eq:consistency} does not hold, it remains true that $D_g(\mathbf{c},\mathbf{d},\mathbf{B})$ as defined above will equal the smallest possible constant $D$ in \eqref{FRBLintro}, holding for all gaussian kernels satisfying the constraints \eqref{FRBLhypIntro}.  Indeed, scaling the $(V_i)_{1\leq i\leq k}$ and $(U_j)_{1\leq j \leq m}$ by a common factor shows $D_g(\mathbf{c},\mathbf{d},\mathbf{B})=+\infty$, while dilating functions by a common factor will show that the best constant $D$ in \eqref{FRBLintro} will also be equal to $+\infty$. 
\end{remark}

\begin{definition}
A datum $(\mathbf{c},\mathbf{d},\mathbf{B})$ is \emph{gaussian-extremizable} if the supremum in \eqref{defDg} is attained and is finite for some $V_i\in S^+(E_i),1\leq i\leq k$ and $U_j\in S^+(E^j), 1\leq j\leq m$ satisfying \eqref{Qconstraint}.  Any such operators are said to be \emph{gaussian extremizers}. 
\end{definition}

The constraint \eqref{Qconstraint} is a bit cumbersome to write out.  So, henceforth, we adopt some notation to make statements more compact; for given $V_i\in S^+(E_i),1\leq i\leq k$ and $U_j\in S^+(E^j), 1\leq j\leq m$,  we define  $V_{\mathbf{c}} : E_0 \to E_0$ and $U_{\mathbf{d}} : \bigoplus_{j=1}^m E^j \to \bigoplus_{j=1}^m E^j $ according to 
$$
V_{\mathbf{c}} := \bigoplus_{i=1}^k c_i V_i; \hspace{1cm}U_{\mathbf{d}} := \bigoplus_{j=1}^m d_j U_j.
$$
Note that this does not cause any ambiguity since $V_i\in S^+(E_i),1\leq i\leq k$ defines $V_{\mathbf{c}}$, and vice versa.  Similarly for $U_{\mathbf{d}}$.  The constraints \eqref{Qconstraint} may now be concisely written as the operator inequality
\begin{align}
\Lc B^* U_{\mathbf{d}} B \Lc  \leq V_{\mathbf{c}}, \label{operatorInequality}
\end{align}
where $B : E_0 \to \bigoplus_{j=1}^k E^j$ is the linear operator defined according to
$$
B : x \longmapsto B_1 x +  \cdots +  B_m x; ~~~~x \in E_0.
$$

We introduce one last piece of notation before our characterization of gaussian-extremizability: 
\begin{definition}
For $A_i \in S^{+}(E_i)$, $i=1,\dots, k$, we let $\Pi(A_1, \dots, A_k)$ denote the set of positive-semidefinite $A\in S(E_0)$ satisfying 
$$
\langle A x_i,x_i\rangle   = \langle A_i x_i, x_i\rangle  \mbox{~~for all $x_i\in E_i$, } 1\leq i\leq k.
$$
\end{definition}
\begin{remark}\label{rmk:PiasCouplings}
The above definition has a natural interpretation in terms of couplings.  Consider a gaussian random vector $X_i$ taking values in $E_i$, with covariance $A_i$, $1\leq i \leq k$.  If a jointly gaussian coupling of $(X_i)_{1\leq i\leq k}$ has covariance $A$, then  $A\in \Pi(A_1, \dots, A_k)$.  Conversely, each $A\in \Pi(A_1, \dots, A_k)$ corresponds to the covariance of a jointly gaussian coupling of $(X_i)_{1\leq i\leq k}$. 
\end{remark}

\begin{theorem}[Structure of gaussian extremizers]\label{thm:FRBLgaussianExtiff}
A datum $(\mathbf{c},\mathbf{d},\mathbf{B})$ is gaussian-extremizable if and only if \eqref{eq:consistency} holds and there are  $V_i\in S^+(E_i), 1\leq i\leq  k$ and  $\Pi  \in \Pi(V^{-1}_1,\dots, V^{-1}_k)$ satisfying 
\begin{align}
\sum_{j=1}^m d_j   \Lc B_j^*\left( B_j \Lc \Pi \Lc B_j^*\right)^{-1} B_j\Lc   \leq  V_{\mathbf{c}}   . \label{optimalityCondition}
\end{align}
Moreover, any such $(V_i)_{1\leq i\leq k }$ together with $U_j = ( B_j \Lc \Pi \Lc B_j^* )^{-1}$, $1\leq j \leq m$, are gaussian extremizers.
\end{theorem}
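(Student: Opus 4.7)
The approach is a matrix-Lagrangian / KKT analysis of the constrained optimization defining $D_g(\mathbf{c},\mathbf{d},\mathbf{B})$. Attach a multiplier $\Pi \in S(E_0)$, $\Pi \succeq 0$, to the operator inequality \eqref{operatorInequality} and form
$$L(V,U;\Pi) \;=\; \Phi(V,U) + \Tr\!\bigl(\Pi(V_{\mathbf{c}} - \Lc B^* U_{\mathbf{d}} B \Lc)\bigr),$$
where $\Phi(V,U) := \sum_{j} d_j \log\det U_j - \sum_{i} c_i \log\det V_i$ is twice the objective in \eqref{defDg}. Formal stationarity $\partial L/\partial V_i = 0$ and $\partial L/\partial U_j = 0$ reproduces exactly the identities $\pi_{E_i} \Pi \pi_{E_i}^{*} = V_i^{-1}$ and $U_j = (B_j \Lc \Pi \Lc B_j^{*})^{-1}$ from the theorem statement; the plan is to justify this KKT correspondence rigorously in both directions.

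For the $(\Rightarrow)$ direction, suppose $(V^{*},U^{*})$ attains the supremum in \eqref{defDg}. Fixing $V = V^{*}$ and optimizing over $U$ alone yields a concave SDP ($\log\det$ is concave in $U_j$, and the constraint is a linear matrix inequality); Slater's condition, which follows from positive definiteness of $V_i^{*}$, yields via strong duality a matrix $\Pi \succeq 0$ satisfying complementary slackness together with $U_j^{*} = (B_j \Lc \Pi \Lc B_j^{*})^{-1}$. A standard envelope-theorem argument applied to the concave inner value function $\Phi^{*}(V) := \max_U [\,\cdot\,]$ then shows that first-order optimality of $V^{*}$ in the outer problem $\sup_V [\Phi^{*}(V) - \sum_i c_i \log\det V_i]$ forces $\pi_{E_i}\Pi\pi_{E_i}^{*} = (V_i^{*})^{-1}$, i.e., $\Pi \in \Pi((V_1^{*})^{-1},\ldots,(V_k^{*})^{-1})$. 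Primal feasibility of $(V^{*},U^{*})$ then delivers \eqref{optimalityCondition}.

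For the $(\Leftarrow)$ direction, given $V_i$ and $\Pi$ satisfying the hypotheses, define $U_j := (B_j \Lc \Pi \Lc B_j^{*})^{-1}$. Feasibility is immediate from \eqref{optimalityCondition}, and the marginal condition on $\Pi$ combined with consistency \eqref{eq:consistency} shows by a direct trace calculation that $L(V,U;\Pi) = \Phi(V,U)$. To promote this equality to global optimality of $(V,U)$, I would appeal to the duality principle of Theorem \ref{thm:equivFormsDg}, which recasts $D_g(\mathbf{c},\mathbf{d},\mathbf{B})$ as a convex minimization naturally parametrized by positive-semidefinite $\Pi$; evaluating that dual at our candidate $\Pi$ produces a matching upper bound $D_g(\mathbf{c},\mathbf{d},\mathbf{B}) \leq \Phi(V,U)/2$, forcing $(V,U)$ to be an extremizer.

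The main obstacle is the non-joint-concavity of $\Phi$---it is convex in $V$ and only concave in $U$---so a naive Lagrangian duality applied to \eqref{defDg} produces only trivial upper bounds and cannot directly certify the $(\Leftarrow)$ implication. The resolution in $(\Rightarrow)$ is the split into an inner concave SDP plus an outer first-order condition; the resolution in $(\Leftarrow)$ is Theorem \ref{thm:equivFormsDg}, which supplies the auxiliary convex minimization needed to certify global optimality. Secondary technical points include verifying Slater's condition for the inner SDP, establishing concavity of $\Phi^{*}$, and ensuring the existence of a single dual $\Pi$ satisfying the marginal conditions at all indices simultaneously.
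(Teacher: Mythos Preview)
Your $(\Rightarrow)$ direction is close in spirit to the paper's: both extract a positive-semidefinite multiplier $\Pi$ via duality and then read off the identities $\Pi_i = V_i^{-1}$ and $U_j = (B_j\Lc\Pi\Lc B_j^*)^{-1}$ from first-order conditions. The paper routes this through the Fenchel--Rockafellar min--max identity (Theorem~\ref{thm:FRdualQuadraticForms}) and then analyzes the equality case in the chain \eqref{critObjective}--\eqref{criticalInequality}; you instead split into an inner concave SDP plus an outer envelope-theorem step. Either works, though you should be alert to the possibility that the inner multiplier is not unique, in which case $\Phi^{*}$ is merely superdifferentiable and you need to argue that \emph{some} optimal $\Pi$ satisfies all $k$ marginal identities simultaneously. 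The paper's cone-duality argument (Proposition~\ref{prop:EquivOptimalConds}, \eqref{l2}$\Rightarrow$\eqref{l3}) handles exactly this point cleanly.

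Your $(\Leftarrow)$ direction has a genuine gap. You correctly identify the obstacle---the objective is convex in $V$, so Lagrangian duality gives only the trivial bound $\sup_{V,U}L(V,U;\Pi)=+\infty$---but then claim Theorem~\ref{thm:equivFormsDg} ``recasts $D_g$ as a convex minimization'' whose evaluation at $\Pi$ yields $D_g\leq\Phi(V,U)/2$. It does not. Plugging $K_i=V_i^{-1}$ and $K=\Pi$ into formulation~2) gives only $\Phi(V,U)\leq 2D_g$, which is just feasibility; and the Lagrangian chain \eqref{critObjective} evaluated at $M=\Pi$ yields, for any competing feasible $(V',U')$,
\[
\Phi(V',U')\;\leq\;\sum_j d_j\log\det U_j \;+\;\sum_i c_i\bigl[\Tr(V_i^{-1}V_i')-\log\det(V_i^{-1}V_i')-\dim E_i\bigr],
\]
and the bracket is $\geq 0$, not $\leq 0$. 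So duality alone cannot certify that your KKT point is a \emph{global} maximizer of a problem that is not concave. The paper closes this gap by an entirely different mechanism: Lehec's stochastic argument (Proposition~\ref{prop:GaussRep}) applied with a Brownian motion of covariance $\Pi$ proves the full functional inequality \eqref{FRBLintro} with constant exactly $\tfrac12\Phi(V,U)$, which forces $D_g(\mathbf c,\mathbf d,\mathbf B)=\tfrac12\Phi(V,U)$ and hence extremality. This analytic step (or an equivalent via optimal transport or heat flow) is not optional---it is what replaces the missing concavity.
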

\begin{remark}\label{rmk:restrictSubspace}
Implicit in \eqref{optimalityCondition} is the assertion that the stated inverses exist;  it is therefore necessary for each $(B_j)_{1\leq j\leq m}$ to be surjective in order for the datum  $(\mathbf{c},\mathbf{d},\mathbf{B})$ to be gaussian extremizable.   Moreover, after  left- and right-multiplying  both sides of \eqref{optimalityCondition} by $\Pi^{1/2}$, the traces of the respective sides will be equal by \eqref{eq:consistency}.  Hence, \eqref{optimalityCondition} is met with equality on  restriction to the subspace $ \Pi E_0$.  See also Remark \ref{rmk:restrictionOfGaussExtremizableCondtions} and Proposition \ref{prop:EquivOptimalConds} for more   along these lines.
\end{remark}
\begin{remark}
In view of the previous remark, for the classical setting of $k=c_1=1$, gaussian-extremizability reduces to \eqref{eq:consistency} and  existence of $V \in S^+(E_1)$ such that 
\begin{align*}
\sum_{j=1}^m d_j     B_j^*\left( B_j   V^{-1}   B_j^*\right)^{-1} B_j    =  V . 
\end{align*}
This  has been repeatedly observed in previous proofs of the Brascamp-Lieb and Barthe inequalities.  
\end{remark}

The  \emph{geometric Brascamp-Lieb inequalities}  proposed by Ball \cite{ball1989volumes} and later generalized by Barthe \cite{barthe1998reverse} are a special case of the (forward) Brascamp-Lieb inequalities for which the linear maps are isometric and $D(\mathbf{c},\mathbf{d},\mathbf{B}) =0$.  The class of geometric Brascamp-Lieb inequalities are  useful in applications (see, e.g., the volume ratio inequalities due to K.~Ball), and are formally equivalent to the class of gaussian-extremizable Brascamp-Lieb inequalities \cite[Proposition 3.6]{bennett2008brascamp}. The following is a generalization to the forward-reverse setting:
\begin{corollary}[Geometric forward-reverse Brascamp-Lieb inequality (I)]  \label{cor:geometric}  Assume \eqref{eq:consistency} holds, and let  linear maps $Q_j : E_0 \to E^j$  and $\Sigma \in \Pi(\operatorname{id}_{E_1}, \dots, \operatorname{id}_{E_k})$  satisfy 
$$Q_j \Sigma Q_j^* = \operatorname{id}_{E^j} ~\mbox{for each~}1\leq j \leq m,  \hspace{7mm} \mbox{and} \hspace{7mm} \sum_{j=1}^m d_j Q_j^* Q_j \leq \Lc.$$ 
If measurable functions $f_i : E_i \to \R^+$,  $1\leq i \leq k$ and $g_j : E^j \to \R^+$,  $1\leq j\leq m$  satisfy 
\begin{align}
\prod_{i=1}^k f_i^{c_i}(\pi_{E_i}(x) ) \leq \prod_{j=1}^m g_j^{d_j}\left( Q_j x \right)\hspace{1cm}\forall x \in E_0,\label{FRBLhypGeom}
\end{align}
then 
\begin{align}
\prod_{i=1}^k \left( \int_{E_i} f_i  \right)^{c_i} \leq  \prod_{j=1}^m \left( \int_{E^j} g_j  \right)^{d_j}.\label{FRBLGeom}
\end{align}
\end{corollary}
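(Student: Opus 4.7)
The plan is to realize the hypothesis of the corollary as a special case of the forward--reverse Brascamp--Lieb hypothesis \eqref{FRBLhypIntro} for a suitably-built datum $(\mathbf{c},\mathbf{d},\mathbf{B})$, and then use Theorem \ref{thm:FRBLgaussianExtiff} to verify that the trivial choice $V_i = \operatorname{id}_{E_i}$, $U_j = \operatorname{id}_{E^j}$ furnishes a gaussian extremizer with $D_g(\mathbf{c},\mathbf{d},\mathbf{B}) = 0$. Theorem \ref{thm:FRBL} then upgrades this to $D(\mathbf{c},\mathbf{d},\mathbf{B}) = 0$, and \eqref{FRBLintro} becomes exactly \eqref{FRBLGeom}.

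To match the two hypotheses, let $\iota_i : E_i \to E_0$ denote the canonical embedding and define $B_{ij} : E_i \to E^j$ by $c_i B_{ij} := Q_j \circ \iota_i$. Since $x = \sum_i \iota_i \pi_{E_i}(x)$ for every $x \in E_0$, one has $B_j \Lc x = \sum_i c_i B_{ij} \pi_{E_i}(x) = Q_j x$, i.e.\ $B_j \Lc = Q_j$. With this dictionary, \eqref{FRBLhypGeom} is literally \eqref{FRBLhypIntro}, so the corollary reduces to showing $D(\mathbf{c},\mathbf{d},\mathbf{B}) \le 0$.

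Now apply Theorem \ref{thm:FRBLgaussianExtiff} with $V_i = \operatorname{id}_{E_i}$ and $\Pi = \Sigma$. The hypothesis $\Sigma \in \Pi(\operatorname{id}_{E_1},\dots,\operatorname{id}_{E_k})$ places $\Pi$ in the class $\Pi(V_1^{-1},\dots,V_k^{-1})$. Using $B_j \Lc = Q_j$ and $Q_j \Sigma Q_j^* = \operatorname{id}_{E^j}$, we get $B_j \Lc \Pi \Lc B_j^* = \operatorname{id}_{E^j}$, so the prescribed extremizers are $U_j = (B_j \Lc \Pi \Lc B_j^*)^{-1} = \operatorname{id}_{E^j}$. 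Since $\Lc B_j^* = Q_j^*$, the optimality condition \eqref{optimalityCondition} becomes $\sum_j d_j Q_j^* Q_j \le \Lc = V_{\mathbf{c}}$, which is precisely the second assumption of the corollary. Theorem \ref{thm:FRBLgaussianExtiff} therefore certifies $(V_i),(U_j)$ as gaussian extremizers; substitution into \eqref{defDg} gives $D_g(\mathbf{c},\mathbf{d},\mathbf{B}) = 0$ because all determinants are equal to $1$, and Theorem \ref{thm:FRBL} concludes that $D(\mathbf{c},\mathbf{d},\mathbf{B}) = 0$.

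There is no real analytic obstacle here: the work has been entirely absorbed into Theorems \ref{thm:FRBL} and \ref{thm:FRBLgaussianExtiff}, and what remains is a bookkeeping exercise around the identification $Q_j \leftrightarrow B_j \Lc$. The only points needing minor care are that $B_{ij}$ be well-defined and bounded (immediate from boundedness of $Q_j$ and positivity of $c_i$) and that the consistency condition \eqref{eq:consistency} be in force, which is explicitly assumed and is required to legitimize the invocation of the two theorems.
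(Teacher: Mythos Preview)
Your proof is correct and follows essentially the same approach as the paper: define $\mathbf{B}$ via $B_j\Lambda_{\mathbf{c}} = Q_j$, verify that $V_i=\operatorname{id}_{E_i}$ and $\Pi=\Sigma$ satisfy the extremizer condition \eqref{optimalityCondition} of Theorem~\ref{thm:FRBLgaussianExtiff}, read off $D_g(\mathbf{c},\mathbf{d},\mathbf{B})=0$ from the resulting identity extremizers, and invoke Theorem~\ref{thm:FRBL}. Your write-up is simply a more explicit version of the paper's argument.
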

\begin{proof} By defining the maps $B_j$ (and therefore $\mathbf{B}$) via $B_j \Lc = Q_j$, the hypothesis \eqref{FRBLhypGeom} coincides with \eqref{FRBLhypIntro}. For the corresponding datum $(\mathbf{c},\mathbf{d},\mathbf{B})$, we see that $V_i = \operatorname{id}_{E_i}$, $1\leq i\leq k$, and $\Pi = \Sigma$ satisfy \eqref{optimalityCondition} by using the assumptions on the $(Q_j)_{1\leq j\leq m}$.  Now, it is a matter of plugging in the asserted extremizers  in Theorem \ref{thm:FRBLgaussianExtiff} into the definition of $D_g(\mathbf{c},\mathbf{d},\mathbf{B})$ to see that $D_g(\mathbf{c},\mathbf{d},\mathbf{B}) = 0$, and therefore $D(\mathbf{c},\mathbf{d},\mathbf{B}) =0$ by Theorem \ref{thm:FRBL}. This gives \eqref{FRBLGeom} by definition. \end{proof}

\begin{remark}
In general, $\Sigma$ in Corollary \ref{cor:geometric} does not need to be of full rank.  An illustrative example is Barthe's inequality, in which $m=d_1=1$, $E^1=\R^n$ and $E_i =\R$ for each $1\leq i \leq k$, with $k\geq n$.  In this setting $Q_1$, viewed as a matrix, has columns $c_i q_i \in \R^{n}$,  where  $|q_i|=1$, $1\leq i\leq k$.  The reader can check that for the choice $\Sigma = \Lc^{-1}Q_1^* Q_1 \Lc^{-1} \in S^+(\R^k)$, the assumptions of the corollary are equivalent to Barthe's frame condition 
$$
|q_i|=1, ~1\leq i\leq k; \mbox{~~~and~~~}\sum_{i=1}^k c_i q_i \otimes q_i = I_{k\times k}.
$$
Note that this  $\Sigma$ has rank at most $n\leq k$, so can be rank-deficient.  
\end{remark}

Although it is a special case, a more symmetric formulation of the geometric forward-reverse  Brascamp-Lieb inequality  may be stated as follows, and explains the role of $\Sigma$ in the previous as a transformation of coordinates when it is assumed to be of full rank.  By specializing to $k=c_1=1$, the geometric case  of the Brascamp-Lieb inequality is easily recognized.  It will also be useful for the applications  in Section \ref{sec:gaussianKernels}.
\begin{corollary}[Geometric forward-reverse Brascamp-Lieb inequality (II)]  \label{cor:geometric2}  
Let linear maps $U_i : E_0\to E_i$ and $V_j : E_0 \to E^j$  satisfy   $U_i U_i^* = \operatorname{id}_{E_i}$ and  $V_j V_j^* = \operatorname{id}_{E^j}$, for all $1\leq i \leq k$ and $1\leq j\leq m$.  Assume further that the map
$$
x \in E_0 \longmapsto \sum_{i=1}^k U_i x \in E_0
$$
is a bijection, and that the following frame condition holds
$$
\sum_{i=1}^k c_i U_i^* U_i = \sum_{j=1}^m d_j V_j^* V_j. 
$$
If measurable functions $f_i : E_i \to \R^+$,  $1\leq i \leq k$ and $g_j : E^j \to \R^+$,  $1\leq j\leq m$  satisfy 
\begin{align}
\prod_{i=1}^k f_i^{c_i}(U_i x ) \leq \prod_{j=1}^m g_j^{d_j}\left( V_j x \right)\hspace{1cm}\forall x \in E_0,\label{FRBLhypGeom2}
\end{align}
then 
\begin{align}
\prod_{i=1}^k \left( \int_{E_i} f_i  \right)^{c_i} \leq  \prod_{j=1}^m \left( \int_{E^j} g_j  \right)^{d_j}.\label{FRBLGeom2}
\end{align}
\end{corollary}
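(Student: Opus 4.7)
The plan is to deduce this statement from the already-proven Corollary~\ref{cor:geometric} by a linear change of variables. Define the linear map $T : E_0 \to E_0$ by $Tx := \sum_{i=1}^k U_i x$, where each $U_i x \in E_i$ is identified with its natural image in $E_0 = \bigoplus_{i=1}^k E_i$; then $\pi_{E_i}\circ T = U_i$, and by assumption $T$ is a bijection, hence invertible. Substituting $y = T x$ into \eqref{FRBLhypGeom2} converts the hypothesis into
$$
\prod_{i=1}^k f_i^{c_i}(\pi_{E_i}(y)) \leq \prod_{j=1}^m g_j^{d_j}(Q_j y), \qquad \forall y\in E_0,
$$
with $Q_j := V_j T^{-1} : E_0 \to E^j$, which matches the hypothesis \eqref{FRBLhypGeom} of Corollary~\ref{cor:geometric}.

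To invoke that corollary, I would take $\Sigma := T T^*$ and verify the three needed properties: (i) $\Sigma \in \Pi(\operatorname{id}_{E_1},\dots,\operatorname{id}_{E_k})$, which reduces via $\pi_{E_i}T = U_i$ to the isometry relations $U_i U_i^* = \operatorname{id}_{E_i}$; (ii) $Q_j \Sigma Q_j^* = V_j T^{-1}TT^*(T^{-1})^* V_j^* = V_j V_j^* = \operatorname{id}_{E^j}$; and (iii) the operator inequality $\sum_j d_j Q_j^* Q_j \leq \Lc$. For (iii), the key observation is the identity $\sum_{i=1}^k c_i \pi_{E_i}^* \pi_{E_i} = \Lc$ on $E_0$, which together with $\pi_{E_i}T = U_i$ yields $\sum_i c_i U_i^* U_i = T^* \Lc T$; combining this with the assumed frame condition $\sum_i c_i U_i^* U_i = \sum_j d_j V_j^* V_j$ and conjugating by $T^{-1}$ produces $\sum_j d_j Q_j^* Q_j = \Lc$ exactly.

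Finally, I would verify the consistency hypothesis \eqref{eq:consistency} (implicit in Corollary~\ref{cor:geometric}) by taking traces of the frame condition and using $\operatorname{tr}(U_i^* U_i) = \operatorname{tr}(U_i U_i^*) = \dim E_i$, and similarly for $V_j$. With all hypotheses of Corollary~\ref{cor:geometric} verified, its conclusion gives \eqref{FRBLGeom2} directly. The proof is essentially bookkeeping, and I anticipate no real obstacle: the only piece requiring a moment's thought is recognizing that $\Sigma = TT^*$ is the natural coupling to use, motivated by the need to transport the identities on each $E_i$ into a single PSD operator on $E_0$ compatible with the maps $Q_j = V_j T^{-1}$.
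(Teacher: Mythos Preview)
Your proposal is correct and follows essentially the same approach as the paper: define the change-of-variables map $T=\sum_i U_i$, set $Q_j = V_j T^{-1}$ and $\Sigma = TT^*$, and verify the hypotheses of Corollary~\ref{cor:geometric}. In fact you are slightly more explicit than the paper, which omits the computation showing $\sum_j d_j Q_j^* Q_j = \Lc$; your derivation of this from $\sum_i c_i \pi_{E_i}^*\pi_{E_i}=\Lc$ and the frame condition is exactly the right way to fill that in.
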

\begin{remark}\label{rmk:equivSurj}
The map $ \sum_{i=1}^k U_i : E_0\to E_0$ being a bijection is equivalent to $\sum_{i=1}^k c_i U_i^* U_i >0$.  
\end{remark}
\begin{proof}
By taking traces, the frame condition ensures that \eqref{eq:consistency} holds.  Next, view $U_i$ as a linear map from $E_0$ into itself, so that $\ker(U_i^*)=E_i^{\perp}$ for each $1\leq i\leq k$.  Since $\sum_{i=1}^k U_i$ is a bijection, it is invertible, and therefore we are justified in defining  $Q_j := V_j (\sum_{i=1}^k U_i)^{-1}$ and $\Sigma := \left( \sum_{i=1}^k U_i \right) \left( \sum_{i=1}^k U_i \right)^*$.  Evidently, $\Sigma$ is positive-definite, and for $x_i \in E_i$, we have $\langle \Sigma x_i,x_i\rangle = \langle x_i,x_i\rangle$ using the assumption  $U_i U_i^* = \operatorname{id}_{E_i}$ and identification of $\ker(U_i^*)=E_i^{\perp}$.  Therefore, $\Sigma \in \Pi(\operatorname{id}_{E_1}, \dots, \operatorname{id}_{E_k})$.  Furthermore, it follows from definitions that $Q_j \Sigma Q_j^* = \operatorname{id}_{E^j}$ for each $1\leq j \leq m$.  Using again the fact that $\sum_{i=1}^k U_i$ is a bijection, we find that  \eqref{FRBLhypGeom2} and \eqref{FRBLhypGeom} are equivalent by a change of variables.   Thus, the hypotheses of Corollary \ref{cor:geometric} are fulfilled, and the conclusion follows. 
\end{proof}

The first formulation of the geometric forward-reverse  Brascamp-Lieb inequality motivates the following definitions:
\begin{definition}
A datum  $(\mathbf{c},\mathbf{d},\mathbf{B})$ is said to be \emph{geometric} if \eqref{eq:consistency} holds,  and the maps $(Q_j)_{1\leq j\leq m}$ defined via $Q_j := B_j \Lc$ satisfy  $\sum_{j=1}^m d_j Q_j^* Q_j \leq \Lc$ and $Q_j \Sigma Q_j^* = \operatorname{id}_{E^j}$, $1\leq j \leq m$,  for some $\Sigma \in \Pi(\operatorname{id}_{E_1}, \dots, \operatorname{id}_{E_k})$.
\end{definition}

\begin{definition}
Data $(\mathbf{c},\mathbf{d},\mathbf{B})$ and $(\mathbf{c}',\mathbf{d}',\mathbf{B}')$ are said to be equivalent if $\mathbf{c}=\mathbf{c}'$,  $\mathbf{d}=\mathbf{d}'$, and there exist  invertible linear transformations $C: E_0\to E_0$ and $C_j: E^j\to E^j$ such that $B_j' = C_j^{-1} B_j C^{-1}$ for each $1\leq j\leq k$.  
\end{definition}

The following characterization of gaussian extremizability extends  \cite[Proposition 3.6]{bennett2008brascamp} to the forward-reverse setting. 
\begin{theorem}
A datum $(\mathbf{c},\mathbf{d},\mathbf{B})$ is gaussian-extremizable if and only if it is equivalent to a geometric datum $(\mathbf{c},\mathbf{d},\mathbf{B}')$. 
\end{theorem}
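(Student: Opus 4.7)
The plan is to prove both directions by constructing an explicit equivalence that normalizes the gaussian extremizers to identity operators, verifying the geometric conditions through direct algebra on the characterization in Theorem \ref{thm:FRBLgaussianExtiff}.

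For the direction ``gaussian-extremizable $\Rightarrow$ equivalent to geometric,'' start with extremizers $V_i\in S^+(E_i)$ and $\Pi \in \Pi(V_1^{-1},\dots,V_k^{-1})$ as provided by Theorem \ref{thm:FRBLgaussianExtiff}, together with $U_j := (B_j \Lc \Pi \Lc B_j^*)^{-1}$.  Set $V:=\bigoplus_{i=1}^k V_i$ and define the block-diagonal operator $C:=V^{1/2}$ on $E_0$, $C_j:=U_j^{-1/2}$ on $E^j$, and $B'_j := C_j^{-1} B_j C^{-1}$, so that $(\mathbf{c},\mathbf{d},\mathbf{B})\sim(\mathbf{c},\mathbf{d},\mathbf{B}')$ by construction.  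The claim is that $(\mathbf{c},\mathbf{d},\mathbf{B}')$ is geometric with witness $\Sigma:=V^{1/2}\Pi V^{1/2}$.  The verification rests on the fact that $C$ is block-diagonal in the $E_i$-decomposition and hence commutes with $\Lc$.  First, the $(i,i)$-block of $\Sigma$ is $V_i^{1/2} V_i^{-1} V_i^{1/2}=\operatorname{id}_{E_i}$, so $\Sigma \in \Pi(\operatorname{id}_{E_1},\dots,\operatorname{id}_{E_k})$.  Writing $Q'_j:=B'_j\Lc = U_j^{1/2} B_j \Lc V^{-1/2}$, one computes
\[
Q'_j \Sigma (Q'_j)^* = U_j^{1/2}\bigl(B_j \Lc \Pi \Lc B_j^*\bigr) U_j^{1/2} = U_j^{1/2} U_j^{-1} U_j^{1/2} = \operatorname{id}_{E^j},
\]
and, using \eqref{operatorInequality} together with $V_{\mathbf{c}}=\Lc V$,
\[
\sum_{j=1}^m d_j (Q'_j)^* Q'_j = V^{-1/2}\bigl(\Lc B^* U_{\mathbf{d}} B \Lc\bigr) V^{-1/2} \le V^{-1/2}\,\Lc V\, V^{-1/2} = \Lc.
\]

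For the reverse direction, assume $(\mathbf{c},\mathbf{d},\mathbf{B})\sim(\mathbf{c},\mathbf{d},\mathbf{B}')$ with $\mathbf{B}'$ geometric via witness $\Sigma$ and $B'_j = C_j^{-1} B_j C^{-1}$.  By Corollary \ref{cor:geometric}, $\mathbf{B}'$ has extremizers $V'_i=\operatorname{id}_{E_i}$, $\Pi'=\Sigma$, $U'_j=\operatorname{id}_{E^j}$.  I then reverse the construction above to transfer these to $\mathbf{B}$: take $\Pi:=C^{-1}\Sigma C^{-1}$, $V_i:=(\pi_{E_i}\Pi \pi_{E_i}^*)^{-1}$, and $U_j:=C_j^{-*}C_j^{-1}$, and verify \eqref{optimalityCondition} by running the same algebra in reverse.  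This goes through cleanly when $C$ is block-diagonal and self-adjoint.  The main obstacle is the case of a general invertible $C$ that does not commute with $\Lc$: the pullback $C^{-1}\Sigma C^{-1}$ then no longer intertwines cleanly with $\Lc$ on both sides and the optimality inequality must be recovered by a compensating adjustment.  I would handle this by absorbing the non-block-diagonal components of $C$ into the $(C_j)_{1\le j\le m}$ (together with a modified witness) so as to reduce to the block-diagonal self-adjoint case, or equivalently by using an $\Lc$-conjugated variant such as $\Pi := \Lc^{-1}C^{-1}\Lc\,\Sigma\,\Lc C^{-*}\Lc^{-1}$ which absorbs the commutator error into the $\Lc$ factors, at the cost of a more intricate check that $\Pi\in\Pi(V_1^{-1},\dots,V_k^{-1})$ for $V_i$ recovered from the $(i,i)$-blocks.
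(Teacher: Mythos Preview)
Your forward direction is correct and coincides with the paper's argument: the paper also takes $C=V^{1/2}$ (block-diagonal) and $C_j=(B_j\Lc\Pi\Lc B_j^*)^{1/2}=U_j^{-1/2}$, and verifies the geometric conditions exactly as you do.

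Your concern about the reverse direction is well-founded, and in fact the suggested workarounds cannot succeed. With the paper's literal definition of equivalence (which allows an arbitrary invertible $C:E_0\to E_0$), the implication ``equivalent to geometric $\Rightarrow$ gaussian-extremizable'' is \emph{false}. Here is a counterexample. Take $k=m=2$, $E_1=E_2=E^1=E^2=\mathbb{R}$, $c_1=c_2=d_1=d_2=1$, and let $B'_1,B'_2$ be the two coordinate projections on $E_0=\mathbb{R}^2$; this datum is geometric (take $\Sigma=\operatorname{id}$) and hence gaussian-extremizable. Now set $C=\begin{pmatrix}1&1\\0&1\end{pmatrix}$, $C_1=C_2=1$, giving $B_1=(1,1)$, $B_2=(0,1)$. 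For any choice of $V_1,V_2>0$ and $\Pi\in\Pi(V_1^{-1},V_2^{-1})$, the $(2,2)$-entry of $V_{\mathbf c}-\sum_j d_j B_j^*(B_j\Pi B_j^*)^{-1}B_j$ equals $V_2-(B_1\Pi B_1^*)^{-1}-(B_2\Pi B_2^*)^{-1}=V_2-s^{-1}-V_2=-s^{-1}<0$, so \eqref{optimalityCondition} can never hold and $(\mathbf c,\mathbf d,\mathbf B)$ is not gaussian-extremizable. (Direct computation confirms $D_g(\mathbf c,\mathbf d,\mathbf B)=0$ but the supremum is not attained.)

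The resolution is not an ``$\Lc$-conjugated variant'' or absorption trick, but rather that the intended notion of equivalence should require $C=\bigoplus_i C_i$ to be block-diagonal with respect to $E_0=\bigoplus_i E_i$. Under that reading, the reverse direction is routine: with $C$ block-diagonal one has $[C,\Lc]=0$, so setting $\Pi:=C^{-1}\Sigma C^{-*}$ (note $C^{-*}$, not $C^{-1}$, unless $C$ is self-adjoint) and $V_i:=C_i^*C_i$ gives $\Pi\in\Pi(V_1^{-1},\dots,V_k^{-1})$, and conjugating the geometric inequality $\sum_j d_j\Lc{B'_j}^*(B'_j\Lc\Sigma\Lc{B'_j}^*)^{-1}B'_j\Lc\le\Lc$ by $C^*$ on the left and $C$ on the right yields \eqref{optimalityCondition} for $\mathbf B$. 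The paper's one-line ``the argument can be reversed'' is presumably meant in this block-diagonal sense, consistent with the block-diagonal $C$ it constructs in the forward direction.
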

\begin{proof}
Suppose $(\mathbf{c},\mathbf{d},\mathbf{B})$ is gaussian-extremizable.  This is equivalent to the existence of $V_i\in S^+(E_i), 1\leq i\leq  k$ and  $\Pi  \in \Pi(V^{-1}_1,\dots, V^{-1}_k)$ satisfying \eqref{optimalityCondition}.  Define $V := \Lc^{-1}V_{\mathbf{c}} = \bigoplus_{i=1}^k V_i$, and note that $\Sigma:= V^{1/2} \Pi V^{1/2} \in  \Pi(\operatorname{id}_{E_1}, \dots, \operatorname{id}_{E_k})$, and moreover, that $V^{-1/2}$ commutes with $\Lc$.  Define $B'_j := C_j^{-1} B_j C^{-1}$ via the  transformations $C_j := (B_j \Lc \Pi \Lc B_j^*)^{1/2}$ and $C:= V^{1/2}$.  Then,  \eqref{optimalityCondition} is expressed in terms of $\mathbf{B}':=(B'_j)_{1\leq j \leq m}$ and $\Sigma$ as
$$
\sum_{j=1}^m d_j \Lc {B'}^{*}_j  \left(  B'_j \Lc \Sigma \Lc {B'}^{*}_j  \right)^{-1} {B'}_j \Lc \leq \Lc. 
$$
In particular, this easily implies $(\mathbf{c},\mathbf{d},\mathbf{B}')$ is geometric since $B'_j \Lc \Sigma \Lc {B'}^{*}_j = \operatorname{id}_{E^j}$ by construction.   Moreover, $(\mathbf{c},\mathbf{d},\mathbf{B})$ and $(\mathbf{c},\mathbf{d},\mathbf{B}')$ are equivalent by definition.  If $(\mathbf{c},\mathbf{d},\mathbf{B})$ is equivalent to a geometric datum, then the argument can be  reversed to conclude gaussian-extremizability via Theorem \ref{thm:FRBLgaussianExtiff}. 
\end{proof}

To state our main result on conditions for finiteness of $D_g(\mathbf{c},\mathbf{d},\mathbf{B})$ and gaussian-extremizability, we define product-form subspaces, followed by two definitions analogous to those  in \cite{bennett2008brascamp}.
\begin{definition}
A subspace $T$ is said to be of \emph{product-form} if 
$$
T = \bigoplus_{i=1}^k \pi_{E_i}(T)
$$
\end{definition}

\begin{definition}A \emph{critical subspace} for  $(\mathbf{c},\mathbf{d},\mathbf{B})$ is a non-zero proper subspace $T\subset E_0$ of product-form satisfying
$$
\sum_{i=1}^k c_i \dim(\pi_{E_i}T) = \sum_{j=1}^m d_j \dim(B_j T) .
$$
\end{definition}
\begin{definition}
The datum  $(\mathbf{c},\mathbf{d},\mathbf{B})$ is \emph{simple} if it has no critical subspaces. 
\end{definition}
We remark that the definition of criticality in \cite{bennett2008brascamp} does not include the restriction to product-form subspaces.  However, the two definitions are still consistent, because in their setting $E_0=E_1$, so that any subspace is trivially of product-form.

Our final  main result generalizes  \cite[Theorem 1.13]{bennett2008brascamp} to the setting of the forward-reverse Brascamp-Lieb inequality:
\begin{theorem}[Conditions for finiteness and gaussian-extremizability]\label{thm:necSuffCondDgFinite}
For a datum $(\mathbf{c},\mathbf{d},\mathbf{B})$, the quantity $D_g(\mathbf{c},\mathbf{d},\mathbf{B})$ is finite if and only if we have the scaling condition
\begin{align}
\sum_{i=1}^k c_i \dim(E_i) = \sum_{j=1}^m d_j \dim(E^j) \label{CONDITIONscaling}
\end{align}
and the dimension condition
\begin{align}
\sum_{i=1}^k c_i \dim(\pi_{E_i}T) \leq  \sum_{j=1}^m d_j \dim(B_j T) ~~\mbox{for all product-form subspaces $T\subseteq E_0$.} \label{CONDITIONdimension}
\end{align}
In particular, these conditions imply that each $(B_j)_{1\leq j\leq m}$ must be surjective. 
Moreover, if $(\mathbf{c},\mathbf{d},\mathbf{B})$ is simple, then it is gaussian-extremizable. 
\end{theorem}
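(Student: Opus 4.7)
The plan is to follow the strategy of Bennett, Carbery, Christ and Tao \cite{bennett2008brascamp}, adapted to the forward-reverse setting with the aid of the structural characterization in Theorem \ref{thm:FRBLgaussianExtiff}. I split the argument into necessity of the two conditions, sufficiency yielding finiteness, and gaussian-extremizability under the additional hypothesis of simplicity.

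Necessity of \eqref{CONDITIONscaling} follows from the scaling remark after Definition \ref{def:DcdB}. For \eqref{CONDITIONdimension}, I would fix a product-form subspace $T = \bigoplus_{i=1}^k T_i \subseteq E_0$ and, for $t \geq 1$ and a small $\epsilon > 0$ to be chosen, test the family
\begin{align*}
V_i^t := P_{T_i^\perp} + t^{-1} P_{T_i} \in S^+(E_i), \qquad U_j^t := \epsilon \bigl( t^{-1} P_{B_j T} + P_{(B_j T)^\perp} \bigr) \in S^+(E^j).
\end{align*}
Decomposing $x = u + v$ with $u \in T$ and $v \in T^\perp = \bigoplus T_i^\perp$ (valid because $T$ is product-form), a direct computation gives $\sum_i c_i \langle V_i^t \pi_{E_i} x, \pi_{E_i} x\rangle = \langle \Lc v, v\rangle + t^{-1} \langle \Lc u, u\rangle$. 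Using $B_j \Lc u \in B_j T$ together with an AM-GM bound on $|B_j \Lc u + P_{B_j T} B_j \Lc v|^2$, the operator inequality \eqref{Qconstraint} is verified for all $t \geq 1$ provided $\epsilon$ is small enough in terms of $B$ and $\Lc$. A determinant computation then yields
$$
\tfrac{1}{2} \sum_j d_j \log \det U_j^t - \tfrac{1}{2} \sum_i c_i \log \det V_i^t = \tfrac{1}{2} \Bigl( \sum_i c_i \dim T_i - \sum_j d_j \dim B_j T \Bigr) \log t + O(1),
$$
so $D_g(\mathbf{c},\mathbf{d},\mathbf{B}) = +\infty$ whenever the dimension inequality fails at $T$. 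Taking $T = E_0$ and combining with \eqref{CONDITIONscaling} forces each $B_j$ to be surjective.

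For sufficiency and attainment in the simple case, assume both conditions hold. The objective \eqref{defDg} is invariant under the joint rescaling $(V_i, U_j) \mapsto (s V_i, s U_j)$ by virtue of \eqref{CONDITIONscaling}. Given a maximizing sequence $(V_i^{(n)}, U_j^{(n)})$, I normalize using this symmetry (say $\sum_i c_i \log \det V_i^{(n)} = 0$) and, after passing to a subsequence, group the eigenvalues of each $V_i^{(n)}$ by asymptotic order of magnitude; the eigenspaces on which $V_i^{(n)} \to 0$ assemble into a product-form subspace $T = \bigoplus T_i \subseteq E_0$. The constraint \eqref{operatorInequality} then forces $U_j^{(n)}$ to remain bounded on $B_j T$, and Theorem \ref{thm:FRBLgaussianExtiff} (used to replace $U_j$ by the near-optimal choice $(B_j \Lc \Pi \Lc B_j^*)^{-1}$) controls its behavior on $(B_j T)^\perp$. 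Tracking leading-order determinant asymptotics, the objective grows at rate $\tfrac{1}{2}(\sum_i c_i \dim T_i - \sum_j d_j \dim B_j T) \log(\text{scale})$, which is $\leq 0$ by \eqref{CONDITIONdimension}; hence $D_g < \infty$. When $(\mathbf{c},\mathbf{d},\mathbf{B})$ is simple, the coefficient is strictly negative for every nonzero proper product-form $T$, so no degeneration of the maximizing sequence can preserve the objective, and a standard compactness argument extracts a limiting maximizer $(V_i, U_j)$ in the interior of the positive cone; this is a gaussian extremizer.

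The main obstacle is the compactness step in the previous paragraph: isolating and classifying the asymptotic eigenvalue-scales of a normalized maximizing sequence, and matching them cleanly to a product-form subspace that witnesses any potential blow-up. Compared with the original forward setting of \cite{bennett2008brascamp}, the extra complication is that the $U_j$'s are now genuine free variables rather than being determined by the $V_i$'s (as in the Brascamp-Lieb case) or vice versa (as in Barthe's case); this is precisely where Theorem \ref{thm:FRBLgaussianExtiff} enters, allowing one to eliminate $U_j$ from the analysis and reduce to tracking the asymptotics of the $V_i$'s alone.
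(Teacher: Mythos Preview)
Your necessity argument is correct and in fact more elementary than the paper's: you exhibit explicit test operators $(V_i^t,U_j^t)$ that degenerate along a given product-form subspace $T$, whereas the paper reaches the same conclusion through the entropic characterization of $D_g$ (Proposition~\ref{prop:entropyCharacterizationDg}) by letting a gaussian vector blow up along $T$. Both yield the same leading-order coefficient $\sum_i c_i\dim T_i-\sum_j d_j\dim B_jT$.

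For sufficiency and the simple case, however, there is a genuine gap. Your appeal to Theorem~\ref{thm:FRBLgaussianExtiff} to replace $U_j$ by $(B_j\Lc\Pi\Lc B_j^*)^{-1}$ is circular: that theorem characterizes gaussian \emph{extremizers}, and its forward direction (Proposition~\ref{prop:OptimCond}) takes gaussian-extremizability as hypothesis --- exactly what you are trying to establish. The correct tool for eliminating the $U_j$'s is the dual formulation in Theorem~\ref{thm:equivFormsDg}, which expresses $D_g$ as a supremum over $(K_i)$ alone with a coupling maximum inside; but once you use that, you are back to the paper's setup. More seriously, the compactness step you flag as the ``main obstacle'' is not merely technical bookkeeping: a maximizing sequence can degenerate at \emph{several} incommensurable scales simultaneously, and a single product-form subspace $T$ (``the eigenspaces on which $V_i^{(n)}\to 0$'') does not capture this. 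One needs to control contributions from each scale separately and show they all have the right sign.

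The paper sidesteps the maximizing-sequence analysis altogether. It works in the entropic/dual picture and proves a combinatorial basis lemma (Lemma~\ref{lem:FindBasis}): for any orthonormal eigenbasis $(e_n)$ of $E_0$ compatible with the $E_i$'s, there exist index sets $I_j$ (built by a backwards greedy algorithm) such that $(B_j e_n)_{n\in I_j}$ is a basis of $E^j$ with uniformly controlled degeneracy, and the counting inequalities $\sum_j d_j|I_j\cap\{n{+}1,\dots,N\}|\ge\sum_i c_i|S_i\cap\{n{+}1,\dots,N\}|$ hold for every $n$. A telescoping sum over the ordered one-dimensional entropies $h(W_1)\le\cdots\le h(W_N)$ then converts these counting inequalities directly into the bound $D_g<\infty$, with a strict gap $\delta>0$ in the simple case that forces eigenvalues into a compact window. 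This handles all scales at once without ever extracting a limiting subspace.
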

\begin{remark}
The special case where $k=c_1=1$ reduces to \cite[Theorem 1.13]{bennett2008brascamp}. 
\end{remark}
\begin{remark}
By Theorem \ref{thm:FRBL}, finiteness of $D_g(\mathbf{c},\mathbf{d},\mathbf{B})$ is equivalent to finiteness of $D_g(\mathbf{d},\mathbf{c},\mathbf{B}^*)$.  As expected, it can also be verified directly that the conditions of Theorem \ref{thm:necSuffCondDgFinite} are  invariant under considering the dual datum $(\mathbf{c},\mathbf{d},\mathbf{B})\to (\mathbf{d},\mathbf{c},\mathbf{B}^*)$.  Indeed, let $E^0 := \bigoplus_{j=1}^m E^j$, and define $B^i : E^0 \to E_i$ via the map
$$
B^i y := \sum_{j=1}^m B^*_{ij} \pi_{E^j}(y), ~~~y\in E^0. 
$$ 
For any $W_j \subseteq E^j$, $1\leq j\leq m$, define the product-form subspace $W = \bigoplus_{j=1}^m W_j\subseteq E^0$, and consider $T = \bigoplus_{i=1}^k T_i$, with $T_i := E_i / B^i W$.  By the inclusion $ \bigoplus_{i=1}^k B^i W\supseteq B^* W \supseteq B_j^* W_j$, we have
$$
B_j T  \subseteq B_j ( E_0 / (B^* W)   ) \subseteq B_j ( E_0 / (B_j^* W_j  )   ) \subseteq E^j /W_j,
$$
where the final equality follows again by set inclusion and the observation $B_j^* W_j\subseteq \ker(B_j)^{\perp}$.  Hence, \eqref{CONDITIONdimension} implies
$$
\sum_{i=1}^k c_i \dim(E_i)  - \sum_{i=1}^k c_i \dim(B^i W) \leq  \sum_{j=1}^m d_j \dim(E^j) -  \sum_{j=1}^m d_j \dim(W_j).
$$
Cancelling terms using \eqref{CONDITIONscaling} (which is trivially invariant to considering dual datums), we are left with the desired dual counterpart to \eqref{CONDITIONdimension}.
\end{remark}

\subsection{Outline of the paper}
Section \ref{sec:GaussianExtremizableCase} of this paper proves Theorem \ref{thm:FRBL} under the assumption that the datum  is gaussian-extremizable.  This relies on establishing the structure of gaussian extremizers given in Theorem \ref{thm:FRBLgaussianExtiff}, and then adapting Lehec's stochastic proof of the Brascamp-Lieb and Barthe inequalities to exploit this information. 

It turns out that the analysis of the gaussian-extremizable case more or less suffices to prove Theorem \ref{thm:FRBL} in its full generality.  To do this, we develop a machinery for iteratively decomposing a datum that is not gaussian-extremizable.  This is the general focus of Section \ref{sec:Decomposability}, which parallels the  development of Bennett, Carbery, Christ and Tao for the  special case of the direct Brascamp-Lieb inequality \cite{bennett2008brascamp}.  The conditions for finiteness and gaussian-extremizability articulated in Theorem \ref{thm:necSuffCondDgFinite} are a product of these arguments.

Connections between the forward-reverse Brascamp-Lieb inequality and other results in the literature are detailed in Section \ref{sec:connections}.

\subsection*{Acknowledgment}
The first author is grateful for discussions with Franck Barthe, Michael Christ, Varun Jog, Joseph Lehec and Pawe\l{} Wolff. In particular, he thanks Franck Barthe for drawing our attention to references \cite{barthe2014positivity, barthe2018positive}, and Pawe\l{} Wolff  for explaining the connections to the forward-reverse Brascamp-Lieb inequality.

\section{The Gaussian-Extremizable Case}\label{sec:GaussianExtremizableCase}

The goal of this section is to establish Theorem \ref{thm:FRBL} under the assumption of gaussian-extremizability.  Specifically, we aim to prove the following two results:
\begin{theorem}\label{thm:FRBLunderGaussExt}
If a datum $(\mathbf{c},\mathbf{d},\mathbf{B})$ is gaussian-extremizable, then $D(\mathbf{c},\mathbf{d},\mathbf{B})=D_g(\mathbf{c},\mathbf{d},\mathbf{B})$.
\end{theorem}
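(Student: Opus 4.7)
The plan is to combine the structural description of gaussian extremizers furnished by Theorem \ref{thm:FRBLgaussianExtiff} with a stochastic variational argument in the spirit of Lehec's proofs of the Brascamp--Lieb and Barthe inequalities. Since $D_g(\mathbf{c},\mathbf{d},\mathbf{B})\leq D(\mathbf{c},\mathbf{d},\mathbf{B})$ holds trivially by definition, only the reverse inequality requires proof under the gaussian-extremizability hypothesis.

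My first step would be to reduce to the geometric case. Let $V_i\in S^+(E_i)$ and $\Pi\in\Pi(V_1^{-1},\dots,V_k^{-1})$ be the extremizers furnished by Theorem \ref{thm:FRBLgaussianExtiff}, and set $U_j:=(B_j\Lc\Pi\Lc B_j^*)^{-1}$. The substitutions $f_i(x_i)\mapsto f_i(V_i^{-1/2}x_i)e^{-|x_i|^2/2}$ and $g_j(y_j)\mapsto g_j(U_j^{-1/2}y_j)e^{-|y_j|^2/2}$, together with a corresponding linear change in the maps $B_{ij}$, convert the original inequality into the same inequality for a geometric datum in which the standard gaussian is the extremizer and $D_g=0$. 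This is essentially the argument used after Corollary \ref{cor:geometric2}. Concretely, one is left with linear maps $Q_j=B_j\Lc:E_0\to E^j$ and a coupling $\Sigma=V^{1/2}\Pi V^{1/2}\in\Pi(\operatorname{id}_{E_1},\dots,\operatorname{id}_{E_k})$ satisfying $Q_j\Sigma Q_j^*=\operatorname{id}_{E^j}$ and $\sum_j d_j Q_j^*Q_j\leq\Lc$, and one needs only to establish \eqref{FRBLGeom} under the pointwise constraint \eqref{FRBLhypGeom}.

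The second step is a coupled stochastic construction. Let $W_t$ be a standard Brownian motion on $E_0$ and set $\widetilde W_t:=\Sigma^{1/2}W_t$; then $X_i(t):=\pi_{E_i}(\widetilde W_t)$ and $Y_j(t):=Q_j\widetilde W_t$ are standard Brownian motions on $E_i$ and $E^j$ respectively. I would apply the Bou\'e--Dupuis variational formula with infimum over adapted drifts to each $-\log\int f_i\,e^{-|\cdot|^2/2}dx_i$, and its reverse counterpart (with supremum) to each $-\log\int g_j\,e^{-|\cdot|^2/2}dy_j$, then take the $c_i$- and $d_j$-weighted sums respectively and parametrize all drifts as projections of a single adapted $E_0$-valued drift $u$ via $\pi_{E_i}$ and $Q_j$. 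The pointwise constraint \eqref{FRBLhypGeom}, applied at $t=1$ along a typical path of the drifted processes, chains the two formulas; what survives beyond the integrals of the $\log f_i$ and $\log g_j$ is the quadratic residual $\int_0^1\bigl(\sum_i c_i|\pi_{E_i}(u_s)|^2-\sum_j d_j|Q_j u_s|^2\bigr)ds$, which is nonnegative for every $u$ precisely by the operator inequality $\sum_j d_j Q_j^*Q_j\leq\Lc$ satisfied by the geometric datum.

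The hardest step will be the energy-balance argument just sketched: the forward-reverse architecture, in which both the $f_i$ and $g_j$ are subject to extremization, forces both the direct and the reverse Bou\'e--Dupuis formulas into play simultaneously, and the drifts on the two sides must be reconciled through a common adapted process on $E_0$. The structural theorem is what makes this reconciliation possible: the coupling $\Sigma\in\Pi(\operatorname{id}_{E_1},\dots,\operatorname{id}_{E_k})$ realizes arbitrary drifts on the $E_i$ as $\pi_{E_i}$-projections of a single $E_0$-valued drift with exactly the right $c$-weighted energy, while the induced drifts $Q_j u$ on the $E^j$ have $d$-weighted energy dominated by $\sum_j d_j Q_j^*Q_j\leq\Lc$. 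Once this step is carried out, the remainder of the proof is a routine combination of stochastic calculus and the pathwise application of the constraint.
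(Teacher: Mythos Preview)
Your overall architecture---reduce to geometric via Theorem~\ref{thm:FRBLgaussianExtiff}, then run a Lehec-style Bou\'e--Dupuis argument with a single $E_0$-valued drift $u$, using the pointwise constraint and the energy inequality $\sum_j d_j Q_j^*Q_j\leq\Lc$---is essentially the paper's approach. The paper does not reduce to the geometric datum first; it works directly with the Brownian motion on $E_0$ with covariance $\Pi$ furnished by Proposition~\ref{prop:OptimCond}, but this is only a cosmetic difference. Two points in your description need correction.

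First, there is no ``reverse counterpart'' of Bou\'e--Dupuis. Both sides of the chain use the \emph{same} supremum formula $\log\EE e^{g(W_T)}=\sup_U\EE[g(W_T+U_T)-\tfrac12\|U\|^2]$. For the $f_i$ side you pick near-optimal drifts $u^i$ (so that $\sum_i c_i\log\EE e^{\log f_i(X_i)}$ is bounded above by the drifted expression plus $\epsilon$); for the $g_j$ side you simply observe that the induced drift $Q_j u$ is \emph{one particular} admissible drift, hence gives a lower bound on the supremum $\log\EE e^{\log g_j(Y_j)}$. The chaining is thus sup--sup, not inf--sup, and requires no second variational principle.

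Second, your argument at time $t=1$ controls $\prod_i(\int_{E_i}f_i\,d\gamma_{E_i})^{c_i}$ versus $\prod_j(\int_{E^j}g_j\,d\gamma_{E^j})^{d_j}$, i.e.\ gaussian-weighted integrals, not the Lebesgue integrals appearing in the definition of $D(\mathbf{c},\mathbf{d},\mathbf{B})$. The substitution $f_i(x)\mapsto f_i(V_i^{-1/2}x)e^{-|x|^2/2}$ does not repair this: it produces functions satisfying the geometric constraint, but their Lebesgue integrals are $\det(V_i)^{1/2}\int f_i(z)e^{-\langle V_i z,z\rangle/2}dz$, not $\int f_i$. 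The paper resolves this by running the Brownian motion to a horizon $T$ and letting $T\to\infty$ at the end (so the gaussian weight $e^{-\langle V_i x,x\rangle/2T}$ disappears by monotone convergence). You need to insert this limit; the reduction to the geometric datum does not avoid it.
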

\begin{theorem}\label{thm:FRBLdualityConstants}
If a datum $(\mathbf{c},\mathbf{d},\mathbf{B})$ is gaussian-extremizable, then so is $(\mathbf{d},\mathbf{c},\mathbf{B}^*)$.  Moreover, 
$$D_g(\mathbf{c},\mathbf{d},\mathbf{B})=D_g(\mathbf{d},\mathbf{c},\mathbf{B}^*),$$
regardless of whether the data are gaussian-extremizable. 
\end{theorem}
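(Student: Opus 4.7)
The plan is to exhibit an involutive bijection $\sigma:(V,U)\mapsto(V^{-1},U^{-1})$ between the feasible sets of the two gaussian optimization problems defining $D_g(\mathbf{c},\mathbf{d},\mathbf{B})$ and $D_g(\mathbf{d},\mathbf{c},\mathbf{B}^*)$, which preserves the objective value exactly. Once this is done, both assertions of the theorem follow immediately: equality of the suprema (whether or not attained) gives $D_g(\mathbf{c},\mathbf{d},\mathbf{B})=D_g(\mathbf{d},\mathbf{c},\mathbf{B}^*)$, and attainability on one side is transported by $\sigma$ to the other, after which Theorem \ref{thm:FRBLgaussianExtiff} applied to the dual datum supplies the coupling required to certify gaussian-extremizability.

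The key step is a Schur-complement manipulation. Writing $Z_i:=V_i^{-1}$ and $W_j:=U_j^{-1}$, and noting that $V_{\mathbf{c}},U_{\mathbf{d}}>0$ by hypothesis, the primal constraint $\Lc B^*U_{\mathbf{d}}B\Lc\le V_{\mathbf{c}}$ is equivalent to the block-positivity
\begin{equation*}
\begin{pmatrix} V_{\mathbf{c}} & \Lc B^* \\ B\Lc & U_{\mathbf{d}}^{-1}\end{pmatrix}\ge 0,
\end{equation*}
which by the other Schur complement is equivalent to $B\Lc V_{\mathbf{c}}^{-1}\Lc B^*\le U_{\mathbf{d}}^{-1}$. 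The block-diagonal identities $\Lc V_{\mathbf{c}}^{-1}\Lc=\bigoplus_i c_iV_i^{-1}=Z_{\mathbf{c}}$ and $\Ld U_{\mathbf{d}}^{-1}\Ld=\bigoplus_j d_jU_j^{-1}=W_{\mathbf{d}}$ turn this into $BZ_{\mathbf{c}}B^*\le U_{\mathbf{d}}^{-1}$; conjugating by $\Ld$ yields $\Ld BZ_{\mathbf{c}}B^*\Ld\le W_{\mathbf{d}}$, which is exactly the feasibility constraint defining $D_g(\mathbf{d},\mathbf{c},\mathbf{B}^*)$. The same chain run in reverse shows $\sigma$ is a bijection between the two feasible sets, and a direct computation verifies
\begin{equation*}
\sum_{i=1}^k c_i\log\det Z_i-\sum_{j=1}^m d_j\log\det W_j = \sum_{j=1}^m d_j\log\det U_j-\sum_{i=1}^k c_i\log\det V_i,
\end{equation*}
so the objective is preserved exactly.

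Taking suprema now yields $D_g(\mathbf{c},\mathbf{d},\mathbf{B})=D_g(\mathbf{d},\mathbf{c},\mathbf{B}^*)$ unconditionally. For the gaussian-extremizability claim, if $(V_i,U_j)$ attain the primal supremum then $(V_i^{-1},U_j^{-1})$ attain the dual supremum; Theorem \ref{thm:FRBLgaussianExtiff} applied to $(\mathbf{d},\mathbf{c},\mathbf{B}^*)$ then produces a coupling $\tilde\Pi\in\Pi(U_1,\dots,U_m)$ satisfying the dual form of \eqref{optimalityCondition}, certifying gaussian-extremizability of the dual datum. I do not anticipate any substantive obstacle; beyond bookkeeping how the block-diagonal scalings $\Lc,\Ld$ commute with block-diagonal inverses, the only care required is verifying strict positive-definiteness at each Schur-complement step, which is automatic since $V_i\in S^+(E_i)$ and $U_j\in S^+(E^j)$ by definition.
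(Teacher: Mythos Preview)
Your approach is essentially identical to the paper's: both use the Schur-complement equivalence to show that $(V_i,U_j)\mapsto(V_i^{-1},U_j^{-1})$ is an objective-preserving bijection between the two feasible sets, from which both claims follow immediately.

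One minor point: your final appeal to Theorem~\ref{thm:FRBLgaussianExtiff} is unnecessary and slightly backwards. Gaussian-extremizability is \emph{defined} as attainment (with finite value) of the supremum in \eqref{defDg}; once you have shown that $(U_j^{-1},V_i^{-1})$ is feasible for the dual problem and achieves the dual supremum (which equals the finite primal value), the dual datum is gaussian-extremizable by definition---there is nothing further to certify. Theorem~\ref{thm:FRBLgaussianExtiff} goes the other way (it takes gaussian-extremizability as input and produces the coupling $\Pi$), so invoking it here to ``certify gaussian-extremizability'' reverses the logical flow. Simply drop that sentence and you are done, exactly as in the paper.
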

\begin{remark}
If $D_g(\mathbf{c},\mathbf{d},\mathbf{B})=+\infty$, then the datum $(\mathbf{c},\mathbf{d},\mathbf{B})$ does not satisfy the definition of gaussian-extremizability.  However, we will clearly have $D(\mathbf{c},\mathbf{d},\mathbf{B})=D_g(\mathbf{c},\mathbf{d},\mathbf{B})$ in this case also due to the general relation $D(\mathbf{c},\mathbf{d},\mathbf{B})\geq D_g(\mathbf{c},\mathbf{d},\mathbf{B})$.
\end{remark}
\begin{remark}
The combination of the above   implies the assertion of Theorem \ref{thm:FRBL} if $(i)$ $(\mathbf{c},\mathbf{d},\mathbf{B})$ is gaussian-extremizable; or $(ii)$ $D_g(\mathbf{c},\mathbf{d},\mathbf{B})=+\infty$. 
\end{remark}
\subsection{Proof of Theorems \ref{thm:FRBLgaussianExtiff} and \ref{thm:FRBLunderGaussExt}}\label{sec:Lehec-Style}
Let us assume the following preliminary version of Theorem \ref{thm:FRBLgaussianExtiff}, the proof of which is deferred to Section \ref{sec:ProofofOptimalityProp}.
\begin{proposition}\label{prop:OptimCond}
If a datum $(\mathbf{c},\mathbf{d},\mathbf{B})$ is gaussian-extremizable, then \eqref{eq:consistency} holds and there are  $V_i\in S^+(E_i)$, $1\leq i \leq  k$ and  $\Pi  \in \Pi(V^{-1}_1,\dots, V^{-1}_k)$ such that 
\begin{align}
\sum_{j=1}^m d_j   \Lc B_j^*\left( B_j \Lc \Pi \Lc B_j^*\right)^{-1} B_j\Lc   \leq  V_{\mathbf{c}}   .   \label{propOptim}
\end{align}
\end{proposition}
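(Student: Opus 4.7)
The plan is to derive the claim from first-order optimality at the attained supremum in the definition of $D_g(\mathbf{c},\mathbf{d},\mathbf{B})$.

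First I would establish \eqref{eq:consistency} by a homogeneity argument: the constraint \eqref{operatorInequality} is invariant under the rescaling $(V_i,U_j)\mapsto(tV_i,tU_j)$ for $t>0$, while the objective in \eqref{defDg} shifts by $\tfrac{1}{2}(\log t)\bigl(\sum_j d_j\dim E^j-\sum_i c_i\dim E_i\bigr)$. Attainment of a finite maximum forces this dimensional sum to vanish.

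Next I would apply Lagrangian duality (KKT) to the matrix-inequality constrained problem
$$\max_{V_i\in S^+(E_i),\,U_j\in S^+(E^j)}\ \tfrac{1}{2}\sum_{j=1}^m d_j\log\det U_j-\tfrac{1}{2}\sum_{i=1}^k c_i\log\det V_i\quad\text{subject to}\quad\Lc B^*U_{\mathbf d}B\Lc\le V_{\mathbf c}.$$
Slater's condition is easily checked (take $V_i=tI$ and $U_j=I$ with $t$ sufficiently large), producing a positive-semidefinite Lagrange multiplier $\Lambda\in S^+(E_0)$ with complementary slackness $\Lambda(V_{\mathbf c}-\Lc B^*U_{\mathbf d}B\Lc)=0$. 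Differentiating the associated Lagrangian in $V_i$ and in $U_j$ --- using $\nabla_A\log\det A=A^{-1}$ and the cyclic identity $\Tr(\Lambda\Lc B_j^*U_jB_j\Lc)=\Tr\bigl((B_j\Lc\Lambda\Lc B_j^*)U_j\bigr)$ --- yields the first-order identities $\pi_{E_i}\Lambda\pi_{E_i}^*=\tfrac{1}{2}V_i^{-1}$ and $U_j=\tfrac{1}{2}(B_j\Lc\Lambda\Lc B_j^*)^{-1}$.

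Setting $\Pi:=2\Lambda\ge 0$, the block identity reads exactly as $\Pi\in\Pi(V_1^{-1},\dots,V_k^{-1})$ and the $U_j$ identity becomes $U_j=(B_j\Lc\Pi\Lc B_j^*)^{-1}$. Substituting this form of $U_j$ back into the feasibility inequality $\Lc B^*U_{\mathbf d}B\Lc\le V_{\mathbf c}$ yields precisely \eqref{propOptim}.

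The main obstacle I anticipate is the careful invocation of KKT for a PSD-cone constrained smooth problem whose objective is not jointly concave in $(V,U)$: one must verify a suitable constraint qualification (Robinson's, implied by the Slater point above) so that a positive-semidefinite multiplier in $S^+(E_0)$ exists, and separately confirm that the inverses $(B_j\Lc\Lambda\Lc B_j^*)^{-1}$ are well defined at the optimum, which follows from $U_j^*\in S^+(E^j)$ via the first-order identity for $U_j$.
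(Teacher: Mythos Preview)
Your approach is correct. The paper's primary proof of this proposition takes a substantially different route: it first establishes a strong min--max identity (via Fenchel--Rockafellar duality on $S(E_0)$) that yields an equivalent ``coupling'' characterization of $D_g(\mathbf{c},\mathbf{d},\mathbf{B})$, namely that the best constant in \eqref{detObjective} equals the best constant in an inequality of the form
\[
\sum_{i=1}^k c_i\log\det K_i \le C + \max_{K\in\Pi(K_1,\dots,K_k)}\sum_{j=1}^m d_j\log\det(B_j\Lc K\Lc B_j^*),
\]
and then reads off \eqref{propOptim} from the equality case of the chain of inequalities linking the two formulations. That route is longer, but it simultaneously delivers the dual (coupling) description of $D_g$, which the paper needs later for the entropic reformulation and the finiteness analysis.

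Your direct KKT argument essentially coincides with a second, self-contained argument the paper records afterward (in its proposition on equivalent optimality characterizations, the implication from local maximality to the existence of $\Pi$). There, rather than invoking Robinson/Slater machinery, the paper observes that the feasible region is the relative interior of a closed convex cone $\mathcal{C}^*\subset\prod_i S(E_i)\times\prod_j S(E^j)$; since a convex cone is closed under addition, at a local maximum the gradient has nonpositive inner product with every element of $\mathcal{C}^*$, and the bipolar theorem then places the negative gradient in $\mathcal{C}$, which encodes precisely your two first-order identities $\pi_{E_i}\Pi\pi_{E_i}^*=V_i^{-1}$ and $U_j^{-1}=B_j\Lc\Pi\Lc B_j^*$. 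This conic shortcut avoids any constraint-qualification technicalities and is a mild simplification you could substitute for your Robinson/Slater verification.
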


We now describe the variational representation for gaussian integrals due to Bou\'e and Dupuis \cite{boue1998variational} (see also Borell \cite{borell2000diffusion}, and historical remarks by Lehec \cite{lehec2013representation}).  For a given time horizon $T$ and a Euclidean space $E$, a Brownian motion $(W_t)_{0\leq t \leq T}$ (starting from 0) taking values in $E$ is said to have covariance $K\in S^{+}(E)$ if $\Cov(W_1)=K$.  Let $\mathbb{H}(E,K)$ denote the Hilbert space of all absolutely continuous paths $u: [0,T]\to E$ starting from $0$, equipped with norm 
$$
\|u \|^2_{\mathbb{H}(E,K)} := \int_0^T \langle K^{-1} \dot{u}_s,\dot{u}_s\rangle ds. 
$$
A \emph{drift} $U$ is any process adapted to the Brownian filtration which has sample paths belonging to $\mathbb{H}(E,K)$ almost surely. 

\begin{proposition}\label{prop:GaussRep}
Let $g: E \to \R$ be measurable and bounded from below, and let $(W_t)_{0\leq t \leq T}$ be a Brownian motion with covariance $K\in S^{+}(E)$.  It holds that
 $$
\log \left( \EE e^{g(W_T)} \right) = \sup  \EE\left[ g(W_T+U_T) - \frac{1}{2}\|U\|^2_{\mathbb{H}(E,K)} \right],  
$$
where the supremum is taken over all drifts $U$.
\end{proposition}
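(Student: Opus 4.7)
The plan is to follow the classical proof strategy of Bou\'e--Dupuis and Borell: derive the two inequalities separately by combining Girsanov's theorem with the Gibbs variational principle for relative entropy. The latter asserts that for bounded-below measurable $g$ and any law $\mathbb{Q}\ll\mathbb{P}$,
$$\log \EE_{\mathbb{P}} e^{g(W_T)} \geq \EE_{\mathbb{Q}} g(W_T) - H(\mathbb{Q}\|\mathbb{P}),$$
with equality attained when $d\mathbb{Q}/d\mathbb{P} \propto e^{g(W_T)}$.

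For the lower bound on $\log \EE e^{g(W_T)}$, I would fix an arbitrary admissible drift $U$ with $\EE\|U\|^2_{\mathbb{H}(E,K)} < \infty$ and define the tilted law $\mathbb{Q}$ via the Dol\'eans exponential $d\mathbb{Q}/d\mathbb{P} = \exp\!\left(\int_0^T \langle K^{-1}\dot U_s, dW_s\rangle - \tfrac12\|U\|^2_{\mathbb{H}(E,K)}\right)$. By Girsanov's theorem, $\tilde W_t := W_t - U_t$ is a Brownian motion with covariance $K$ under $\mathbb{Q}$, and a short It\^o computation (after rewriting the stochastic integral against $d\tilde W$) yields the relative-entropy identity $H(\mathbb{Q}\|\mathbb{P}) = \tfrac12 \EE_{\mathbb{Q}}\|U\|^2_{\mathbb{H}(E,K)}$. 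Substituting into the Gibbs bound and rewriting $W_T = \tilde W_T + U_T$ under $\mathbb{Q}$, the right-hand side becomes $\EE_{\mathbb{Q}}[g(\tilde W_T + U_T) - \tfrac12 \|U\|^2_{\mathbb{H}(E,K)}]$. Since $(\tilde W, U)$ under $\mathbb{Q}$ realizes a Brownian motion with adapted drift exactly as $(W,U)$ does under $\mathbb{P}$, the desired inequality follows after relabelling.

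For the matching upper bound, I would construct an explicit extremizing drift. Assuming first that $g$ is also bounded above, set $M_t := \EE[e^{g(W_T)}\mid\mathcal{F}_t]$; by the martingale representation theorem $dM_t = \langle H_t, dW_t\rangle$ for some predictable process $H$. Applying It\^o's formula to $\log M_t$ and setting the velocity $\dot U^{\star}_s := K H_s/M_s$ yields the pathwise identity
$$g(W_T) = \log M_0 + \int_0^T \langle K^{-1}\dot U^{\star}_s, dW_s\rangle - \tfrac12 \|U^{\star}\|^2_{\mathbb{H}(E,K)}.$$
Taking expectation under the Girsanov tilt that turns $W$ into $W + U^{\star}$ in law (killing the stochastic integral in expectation) leaves $\log M_0 = \EE\!\left[g(W_T + U^{\star}_T) - \tfrac12 \|U^{\star}\|^2_{\mathbb{H}(E,K)}\right]$, exhibiting $U^{\star}$ as a maximizer and completing the reverse inequality.

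The main obstacle will be the usual stochastic-analysis technicalities: ensuring the Dol\'eans exponentials are true martingales (addressed by Novikov's criterion or a localization argument), and lifting the explicit construction from bounded $g$ to the stated bounded-below setting via truncation $g \wedge N \uparrow g$ combined with monotone convergence and Fatou applied to the supremum. These are classical in the Bou\'e--Dupuis theory and worked out carefully in the cited references, so the proof of Proposition \ref{prop:GaussRep} will ultimately reduce to invoking that literature once the setup is matched.
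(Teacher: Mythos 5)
The paper does not prove Proposition \ref{prop:GaussRep} at all: it is imported verbatim from Bou\'e--Dupuis \cite{boue1998variational} (see also Borell \cite{borell2000diffusion} and Lehec \cite{lehec2013representation}), so there is no in-paper argument to compare against. Your sketch is a faithful outline of the standard proof from those references --- Gibbs/Donsker--Varadhan plus Girsanov for one inequality, martingale representation and an It\^o computation on $\log M_t$ to exhibit the optimal drift for the other --- and deferring the remaining technicalities to that literature is exactly what the paper itself does. One caution if you were to write it out in full: the step you dispose of with ``$(\tilde W,U)$ under $\mathbb{Q}$ realizes a Brownian motion with adapted drift exactly as $(W,U)$ does under $\mathbb{P}$'' is where the genuine difficulty of the theorem lives. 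The drift $U$ is adapted to the filtration of $W$, and after the Girsanov tilt it is adapted to the filtration of $\tilde W + U$ rather than of $\tilde W$ alone; reconciling this ``weak'' formulation of the control problem with the ``strong'' one over drifts adapted to the driving Brownian motion (or, in the other direction, showing that the law of $W+U$ has relative entropy \emph{at most} $\tfrac12\EE\|U\|^2_{\mathbb{H}(E,K)}$ via a projection argument) is the substantive content of Bou\'e--Dupuis beyond the formal computation, and should be cited or argued explicitly rather than absorbed into a relabelling.
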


We now prove Theorems \ref{thm:FRBLgaussianExtiff} and \ref{thm:FRBLunderGaussExt}, on the basis of Proposition  \ref{prop:OptimCond}.  The argument is an adaptation of Lehec's proof of the Brascamp-Lieb and Barthe inequalities \cite{lehec2014short}, with the main difference being the incorporation of the optimality conditions of Proposition \ref{prop:OptimCond}. 
\begin{proof}[Proof of Theorems \ref{thm:FRBLgaussianExtiff} and \ref{thm:FRBLunderGaussExt}]
Assume $(f_i)_{1\leq i\leq k}$ and $(g_j)_{1\leq j\leq m}$ satisfy \eqref{FRBLhypIntro}.  For purposes of proving the theorem, we may assume that each $f_i$ is supported on some compact $K_i\subset E_i$, and is bounded from above, say by $M$.  We may also assume that each $g_j$ is bounded from below, say by $M^{-1}$ on the compact set $\sum_{i=1}^{k}c_k B_{ij} K_i \subset E^j$.  The general result will follow by dominated convergence.  As a result, it is easy to see that we may now assume each $g_j$ is bounded from above by some $M' = M'(M,\mathbf{c},\mathbf{d})$, still preserving \eqref{FRBLhypIntro}. Indeed, this modification can only reduce the product in the RHS of \eqref{FRBLintro}, making our task more difficult.

With the above assumptions, fix any $\delta>0$ and introduce the auxiliary functions
$$
u_i = \log(f_i + \delta)
$$
which are of course bounded from below.  Using the assumption that the $f_i$'s and $g_j$'s are uniformly bounded and the hypothesis \eqref{FRBLhypIntro}, there are constants $C,c$ (depending on $M,M', \mathbf{c},\mathbf{d}$, but not  on $\delta$) such that taking 
$$
v_j =\log(g_j + C\delta^{c})
$$
we will have
$$
\sum_{i=1}^k c_i u_i(\pi_{E_i}(x)) \leq \sum_{j=1}^m d_j v_j (B_j \Lc x)~~~~~\forall x\in E_0.
$$
Moreover, the $v_j$'s are also bounded from below.

Using the assumption of gaussian-extremizability, we invoke Proposition \ref{prop:OptimCond}  to select $V_i \in S^+(E_i)$, $i=1, \dots, k$ and $\Pi \in \Pi(V^{-1}_1,\dots, V^{-1}_k)$ satisfying \eqref{propOptim}.   %
In particular, this implies
\begin{align}
\sum_{j=1}^m d_j \|B_j \Lc u \|^2_{ \mathbb{H}(E^j, B_j \Lc \Pi \Lc B_j^*)}  \leq \sum_{i=1}^k c_i \|\pi_{E_i}(u)\|^2_{\mathbb{H}(E_i,V_i^{-1})}  \label{Hnorms}
\end{align}
for any absolutely continuous path $u: [0,T]\to E_0$.  

Now, let $(W_t)_{0\leq t \leq T}$ be a Brownian motion taking values in $E_0$ with covariance $\Pi$.  For each $i=1,\dots, k$, define $W^i_t = \pi_{E_i}(W_t)$, which is a Brownian motion on $E_i$ with covariance $V^{-1}_i$. 

Fix $\epsilon>0$, and for each $i=1,\dots, k$, let $U^i$ be an $E_i$-valued drift belonging to $\mathbb{H}(E_i,V_i^{-1})$ such that
$$
\log \EE e^{u_i(W_T^i)} - c_i^{-1}\epsilon \leq  \EE\left[   u_i(W^i_T+U^i_T) - \frac{1}{2}\|U^i\|^2_{\mathbb{H}(E_i,V_i^{-1})} \right],
$$
the existence of which follows from Proposition \ref{prop:GaussRep}. Define the $E_0$-valued process $U =\sum_{i=1}^k U^k$, and note that $B_j \Lc U$ is a drift  belonging to $\mathbb{H}(E^j, B_j \Lc \Pi \Lc B_j^*)$ by \eqref{Hnorms}.

Hence, the above estimates and another application of Proposition \ref{prop:GaussRep} give
\begin{align*}
&\sum_{i=1}^k c_i \log \EE e^{u_i(W_T^i)} - \epsilon \\
&\leq \EE\left[   \sum_{i=1}^k c_i u_i(W^i_T+U^i_T) -  \frac{1}{2} \sum_{i=1}^k c_i \|U^i\|^2_{\mathbb{H}(E_i,V_i^{-1})} \right]\\
&\leq \EE\left[   \sum_{j=1}^m d_j v_j( B_j \Lc W_T+ B_j \Lc U_T) -  \frac{1}{2} \sum_{j=1}^d d_j \|B_j \Lc U\|^2_{ \mathbb{H}(E^j, B_j \Lc \Pi \Lc B_j^*)} \right]\\
&\leq \sum_{j=1}^m d_j \log \EE e^{v_j(B_j \Lc  W_T)} .
\end{align*}
Since $f_i \leq e^{u_i}$ and $e^{v_j} = g_j + C \delta^c$, we conclude by arbitrary choice of $\epsilon, \delta$ that 
$$
\prod_{i=1}^k \left( \EE [f_i(W_T^{i})] \right)^{c_i} \leq \prod_{j=1}^m \left( \EE [g_j(B_j \Lc  W_T)]\right)^{d_j}. 
$$
In particular, writing out the expectations as integrals and canceling common factors using \eqref{eq:consistency},  the above may be rewritten as
\begin{align*}
&\prod_{i=1}^k \left(  \int_{E_i} f_i(x) e^{-\langle V_i x,x\rangle/2T} dx \right)^{c_i} \\
&\leq \frac{\prod_{i=1}^k \det(V^{-1}_i)^{1/2} }{  \prod_{j=1}^m \det(B_j \Lc \Pi \Lc B_j^*)^{1/2} } \prod_{j=1}^m \left(  \int_{E^j} g_j(x) e^{-\langle (B_j \Lc \Pi \Lc B_j^*)^{-1}x,x\rangle/2T} dx \right)^{d_j} .%
\end{align*}
Letting $T\to +\infty$, monotone convergence yields 
\begin{align*}
&\prod_{i=1}^k \left(  \int_{E_i} f_i(x)  dx \right)^{c_i} \leq \left( \frac{  \prod_{j=1}^m \det((B_j \Lc \Pi \Lc B_j^*)^{-1})^{d_j} }{\prod_{i=1}^k \det(V_i)^{c_i} }  \right)^{1/2}\prod_{j=1}^m \left(  \int_{E^j} g_j(x)   dx \right)^{d_j}. 
\end{align*}
The consequences of this are two-fold: defining $U_j = (B_j \Lc \Pi \Lc B_j^*)^{-1}$, $1\leq j\leq m$, we see that \eqref{propOptim} implies $(V_i)_{1\leq i\leq k},(U_j)_{1\leq j \leq m}$ satisfy \eqref{Qconstraint}.  Therefore, the ratio of determinants is at most $\exp({D_g(\mathbf{c},\mathbf{d},\mathbf{B})})$, proving  Theorem \ref{thm:FRBLunderGaussExt}.   In fact, the ratio of determinants must be precisely  equal to $\exp({D_g(\mathbf{c},\mathbf{d},\mathbf{B})})$, since we have freedom in choosing the functions $(f_i)_{1\leq i\leq k},(g_j)_{1\leq j \leq m}$ subject to \eqref{FRBLhypIntro}.  Thus, we have also shown that if \eqref{eq:consistency} holds and there exist $V_i \in S^+(E_i)$, $i=1, \dots, k$ and $\Pi \in \Pi(V^{-1}_1,\dots, V^{-1}_k)$ satisfying \eqref{propOptim}, then 
the datum $(\mathbf{c},\mathbf{d},\mathbf{B})$ is gaussian-extremizable with $(V_i)_{1\leq i\leq k}$ and $U_j = (B_j \Lc \Pi \Lc B_j^*)^{-1}$, $1\leq j\leq m$ being gaussian extremizers. This proves  the converse of  Proposition \ref{prop:OptimCond} (and therefore Theorem \ref{thm:FRBLgaussianExtiff} on the basis of  Proposition \ref{prop:OptimCond}). 
\end{proof}

\begin{remark}
Using the structure of gaussian extremizers in the gaussian-extremizable case, other proof techniques such as optimal transport or heat flow should also work to establish the above.  
\end{remark}

\subsection{Proof of Theorem \ref{thm:FRBLdualityConstants}}

For $V_i\in S^+(E_i), 1\leq i \leq  k$ and $U_j\in S^+(E^j), 1\leq j \leq m$, we first note that the Schur complement condition for positive-semidefiniteness implies
$$
\Lc^{1/2} B^* \Ld^{1/2}\left( \oplus_{j=1}^m U_j  \right)  \Ld^{1/2}B \Lc^{1/2} \leq \left( \oplus_{i=1}^k V_i  \right)
$$
if and only if 
$$
 \Ld^{1/2}B \Lc^{1/2}  \left( \oplus_{i=1}^k V^{-1}_i  \right) \Lc^{1/2} B^* \Ld^{1/2} \leq \left( \oplus_{j=1}^m U^{-1}_j  \right). 
$$
Noting that the first inequality is precisely \eqref{operatorInequality}, it follows immediately that 
$D_g(\mathbf{c},\mathbf{d},\mathbf{B}) = D_g(\mathbf{d},\mathbf{c},\mathbf{B}^*)$ (regardless of gaussian-extremizability).  Now, suppose the datum $(\mathbf{c},\mathbf{d},\mathbf{B})$ is gaussian-extremizable.  If $V_i\in S^+(E_i),  1\leq i \leq  k$ and $U_j\in S^+(E^j),  1\leq j \leq   m$ are gaussian extremizers for $(\mathbf{c},\mathbf{d},\mathbf{B})$, then it is immediate from the above observation that $U^{-1}_j\in S^+(E^j),  1\leq j \leq  m$ and $V^{-1}_i\in S^+(E_i),  1\leq i \leq  k$ and  are gaussian extremizers for the datum $(\mathbf{d},\mathbf{c},\mathbf{B}^*)$.

\subsection{Proof of Proposition \ref{prop:OptimCond}} \label{sec:ProofofOptimalityProp}
The goal of this section is to prove the optimality conditions asserted in Proposition \ref{prop:OptimCond}, which was the core assumption needed to prove Theorems \ref{thm:FRBLgaussianExtiff} and \ref{thm:FRBLunderGaussExt}.  The basic argument boils down to a strong min-max theorem.  This is given in the first subsection.  The second subsection leverages this min-max identity to complete the proof of Proposition \ref{prop:OptimCond}.  The arguments of this section follow those appearing in our previous work \cite{liu2018forward}, however it suffices to restrict attention to a particular duality enjoyed by positive-definite operators.

\subsubsection{A strong min-max theorem}

\begin{theorem}\label{thm:FRdualQuadraticForms} For any $K_i \in S^+(E_i)$, $i=1, \dots, k$, it holds that
\begin{align}
&\max_{K \in \Pi(K_1, \dots, K_k) }\sum_{j=1}^m d_j     \log \det \left( B_j \Lc K \Lc B_j^* \right) + \sum_{j=1}^m d_j  \dim(E^j) \notag \\
&=\inf_{(V_i,U_j)_{1\leq i\leq k, 1\leq j \leq m}}  \left( 
 \sum_{i=1}^k c_i  \langle V_i, K_i\rangle_{\HS} - \sum_{j=1}^m d_j \log \det U_j\right) , \label{FenchelMaxCoupling}
\end{align}
where the infimum is over $V_i\in S^+(E_i),1\leq i\leq k$ and $U_j\in S^+(E^j), 1\leq j\leq m$ satisfying  $\Lc B^* U_{\mathbf{d}} B \Lc  \leq V_{\mathbf{c}}.$
\end{theorem}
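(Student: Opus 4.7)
The plan is to establish this min-max identity via two-step convex duality: first recast the LHS as a sup-inf over an affine-convex saddle function by applying a Fenchel-type identity for $\log\det$, then swap the order via Sion's minimax theorem, and finally recognize the resulting inner sup as a semidefinite program whose Lagrangian dual coincides with the RHS.

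The starting point is the Legendre-type identity
\[
\log\det M + \dim E = \inf_{U\in S^+(E)}\{\Tr(UM)-\log\det U\},
\]
valid for any $M\in S^+(E)$ and attained at $U=M^{-1}$. Applied term-by-term to $M_j=B_j\Lc K\Lc B_j^*$, this rewrites the LHS of \eqref{FenchelMaxCoupling} as
\[
\sup_{K\in\Pi(K_1,\dots,K_k)}\;\inf_{(U_j)\in\prod_j S^+(E^j)}\; h(K,U),
\]
where $h(K,U):=\Tr\bigl(\Lc B^*U_{\mathbf{d}}B\Lc\cdot K\bigr)-\sum_{j=1}^m d_j\log\det U_j$ is affine in $K$ and strictly convex, lower semicontinuous in $U$ (extended to $+\infty$ at the boundary of the PSD cone, since $-\log\det U_j\to+\infty$ there).

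Next I would apply Sion's minimax theorem to interchange sup and inf. The critical fact needed is that $\Pi(K_1,\dots,K_k)$ is compact: any $K\geq 0$ with diagonal blocks fixed to $K_i$ has $\Tr(K)=\sum_i\Tr(K_i)$, so the feasible set is norm-bounded, and closedness follows from the defining constraints. After the swap, the inner problem becomes a linear SDP, $\sup_{K\in\Pi(K_1,\dots,K_k)}\Tr(MK)$ with $M:=\Lc B^*U_{\mathbf{d}}B\Lc\geq 0$. Introducing Lagrange multipliers for the equality constraints $\pi_{E_i}K\pi_{E_i}^*=K_i$ and writing them as $c_iV_i$ with $V_i\in S(E_i)$, standard SDP duality identifies the dual as $\inf_{V_{\mathbf{c}}\geq M}\sum_i c_i\Tr(V_iK_i)$. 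Strong duality holds via Slater's condition (take $V_i$ sufficiently large so that $V_{\mathbf{c}}>M$). Positivity $V_i\in S^+(E_i)$ emerges automatically because the block-diagonal $V_{\mathbf{c}}\geq M\geq 0$ forces each block $c_iV_i$ to be positive semi-definite, and a small perturbation restores strict positivity without changing the infimum. Combining the two levels of duality yields the RHS of \eqref{FenchelMaxCoupling}.

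The main obstacle is the boundary behavior at points $K$ where some $B_j\Lc K\Lc B_j^*$ is singular: there the inner inf over $U$ equals $-\infty$ (drive $U_j=I+tvv^*$ with $v\in\ker(B_j\Lc K\Lc B_j^*)$, keeping $\Tr(U_j B_j\Lc K\Lc B_j^*)$ bounded while $-d_j\log\det U_j\to-\infty$). This is nevertheless compatible with Sion's hypotheses in the extended-valued sense, since $\log\det(B_j\Lc K\Lc B_j^*)$ also tends to $-\infty$ as $B_j\Lc K\Lc B_j^*$ approaches singularity; upper semicontinuity of $K\mapsto\inf_U h(K,U)$ is preserved, which simultaneously justifies that the supremum on the LHS of \eqref{FenchelMaxCoupling} is attained as a maximum.
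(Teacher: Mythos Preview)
Your argument is correct and follows a genuinely different route from the paper. The paper applies the Fenchel--Rockafellar duality theorem once on the space $S(E_0)$: it defines two convex functionals $\Theta(F)=\inf\{-\sum_j d_j\log\det U_j:\Lc B^*U_{\mathbf d}B\Lc\leq F\}$ and $\Xi(F)=\inf\{\sum_i c_i\langle V_i,K_i\rangle_{\HS}:V_{\mathbf c}\geq F\}$, verifies the qualification condition at $F=\operatorname{id}_{E_0}$, and computes $\Xi^*$, $\Theta^*$ directly to recover the coupling side. Your approach instead layers two standard tools: the scalar Legendre identity for $\log\det$ turns the LHS into a concave--convex saddle, Sion's theorem (legitimate because $\Pi(K_1,\dots,K_k)$ is compact convex and $h$ is affine in $K$, convex in $U$) swaps the order, and then conic LP duality over $K\geq 0$ with fixed diagonal blocks produces the $(V_i)$ variables; Slater holds on both sides since $K=\bigoplus_i K_i>0$ is strictly primal-feasible and $V_{\mathbf c}>M$ is always dual-achievable. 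Your observation that $V_{\mathbf c}\geq M\geq 0$ forces each block $c_iV_i\geq 0$, with a trivial perturbation to reach $S^+(E_i)$, cleanly closes the gap with the stated domain of the infimum. The paper's single Fenchel--Rockafellar step is more compact but requires computing conjugates by hand; your two-stage argument is more modular, uses only off-the-shelf minimax and SDP duality, and makes the role of the coupling constraint (as the primal of a block-marginal SDP) more transparent.
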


The critical ingredient in the proof is   the Fenchel-Rockafellar duality theorem \cite{rockafellar1966extension}, stated here as it appears in \cite[Theorem 1.9]{villani2003topics}:
 \begin{theorem}\label{thm:Fenchel-Rockafellar }
 Let $X$ be a normed vector space, $X^*$ its topological dual space, and $\Theta, \Xi$ two proper convex functions  Let $\Theta^*, \Xi^*$ be the Legendre-Fenchel transforms of $\Theta, \Xi$ respectively.  Assume that there is some $x_0\in X$ such that 
 $$
 \Theta(x_0) < +\infty, ~~\Xi(x_0)< +\infty,
 $$
 and $\Theta$ is continuous at $x_0$.  Then, 
 $$
 \inf_{x\in X}\Big[\Theta(x) + \Xi(x) ] = \max_{x^*\in X^*}\Big[-\Theta^*(-x^*) - \Xi^*(x^*) \Big].
 $$
 \end{theorem}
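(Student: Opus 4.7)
My plan reduces the claimed identity to two duality steps, each an instance of Fenchel–Rockafellar duality (Theorem \ref{thm:Fenchel-Rockafellar }), together with the variational representation of log-det. For any $A \in S^+$ of dimension $n$, one has $\log\det A + n = \inf_{U\succ 0}\{\langle U,A\rangle_{\HS} - \log\det U\}$, attained at $U=A^{-1}$. Applying this to each $A_j := B_j\Lc K\Lc B_j^*$ and summing with weights $d_j$ rewrites the LHS of \eqref{FenchelMaxCoupling} as
$$
\sup_{K\in\Pi(K_1,\ldots,K_k)}\ \inf_{(U_j)\succ 0}\Bigl\{\langle \Lc B^* U_{\mathbf{d}} B\Lc,\,K\rangle_{\HS} - \sum_{j=1}^m d_j\log\det U_j\Bigr\},
$$
with integrand $f(K,U)$ that is linear in $K$ and a sum of linear-in-$U_{\mathbf{d}}$ plus convex $-\log\det U_j$ in $(U_j)$.

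The second step exchanges the $\sup$ and $\inf$. The set $\Pi(K_1,\ldots,K_k)$ is convex and compact (the prescribed diagonal blocks force $\Tr K=\sum_i\Tr K_i$ constant, and $K\succeq 0$ then bounds the operator norm), while $\{(U_j)\succ 0\}$ is an open convex cone on which $U\mapsto f(K,U)$ is lower-semicontinuous, blowing up at the boundary through the $-\log\det$ terms. Sion's minimax theorem therefore applies, or, equivalently, one can invoke Theorem \ref{thm:Fenchel-Rockafellar } on $X=\prod_{j}S(E^j)$ with $\Theta(U) = -\sum_j d_j\log\det U_j$ (extended by $+\infty$ outside $\prod_j S^+(E^j)$) and $\Xi(U) = \sup_{K\in\Pi(K_1,\ldots,K_k)}\langle \Lc B^* U_{\mathbf{d}} B\Lc, K\rangle_{\HS}$, which is convex as the pointwise supremum of a family of linear functionals of $U$. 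Either way, one gets $\sup_K\inf_U f(K,U) = \inf_U\sup_K f(K,U)$.

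The third step evaluates the inner $\sup_{K\in\Pi}\langle M,K\rangle_{\HS}$ with $M := \Lc B^* U_{\mathbf{d}} B\Lc\succeq 0$. This is a semidefinite program whose dual, obtained by a second application of Theorem \ref{thm:Fenchel-Rockafellar } on $X=S(E_0)$ with $\Theta(K) = -\langle M,K\rangle + \iota_{\{K\succeq 0\}}$ and $\Xi(K) = \iota_{\{K_{ii}=K_i\ \forall i\}}$, equals $\inf\{\sum_i c_i\langle V_i,K_i\rangle_{\HS} : V_{\mathbf{c}} \geq M\}$. Strong duality is secured by Slater's condition: $K=\bigoplus_i K_i\succ 0$ is a strictly feasible primal point. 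Substituting this back and combining with the $-\sum_j d_j\log\det U_j$ term produces exactly the RHS of \eqref{FenchelMaxCoupling}. The positivity constraint $V_i\in S^+(E_i)$ in the statement of the theorem is automatic from $V_{\mathbf{c}}\geq M\succeq 0$ (which forces $V_i\succeq 0$) together with a perturbation $V_i\mapsto V_i+\varepsilon I$ that moves the objective by $O(\varepsilon)$.

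The main obstacle I anticipate is the boundary behavior of $K$: the variational formula for $\log\det$ requires each $B_j\Lc K\Lc B_j^*$ to be strictly positive-definite, yet the outer $\sup$ over $\Pi(K_1,\ldots,K_k)$ may include rank-deficient $K$'s for which some $\log\det$ is $-\infty$. I would handle this by restricting the $\sup$ to the (relatively open) subset on which every $B_j\Lc K\Lc B_j^*\succ 0$ and appealing to upper semicontinuity of $K\mapsto\sum_j d_j\log\det(B_j\Lc K\Lc B_j^*)$ to extend the identity; this is where surjectivity of the $B_j$'s enters implicitly. In addition, verifying the finiteness and continuity hypotheses needed for each Fenchel–Rockafellar application—especially the existence of a strictly feasible point for the SDP in step three—is routine but deserves care.
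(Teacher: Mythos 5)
The statement you were asked to prove is the Fenchel--Rockafellar duality theorem itself (Theorem \ref{thm:Fenchel-Rockafellar }): a general assertion about two proper convex functions on an arbitrary normed vector space. Your proposal does not address this statement. What you have written is a sketch of the strong min--max identity \eqref{FenchelMaxCoupling}, i.e.\ Theorem \ref{thm:FRdualQuadraticForms}, and you establish it by repeatedly \emph{invoking} Theorem \ref{thm:Fenchel-Rockafellar } as a black box. As an argument for the stated theorem this is circular; as an argument for Theorem \ref{thm:FRdualQuadraticForms} it is a plausible alternative route (two dualizations plus the variational formula for $\log\det$, versus the paper's single application of Fenchel--Rockafellar to carefully chosen $\Theta$ and $\Xi$ on $S(E_0)$), but that is not the task at hand. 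Note also that the paper does not prove this statement: it quotes it from Rockafellar, as presented in Villani's book.

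A proof of the actual statement must be a piece of abstract convex analysis, and none of its ingredients appear in your proposal. The inequality $\geq$ (weak duality) is immediate from the definition of the Legendre--Fenchel transform: for all $x\in X$ and $x^*\in X^*$ one has $\Theta^*(-x^*)\geq \langle -x^*,x\rangle-\Theta(x)$ and $\Xi^*(x^*)\geq\langle x^*,x\rangle-\Xi(x)$, whence $-\Theta^*(-x^*)-\Xi^*(x^*)\leq\Theta(x)+\Xi(x)$. The substance is the reverse inequality together with attainment of the maximum: setting $m:=\inf_{x}[\Theta(x)+\Xi(x)]$ (the case $m=-\infty$ being trivial by weak duality), the continuity of $\Theta$ at $x_0$ makes the epigraph of $\Theta$ a convex subset of $X\times\R$ with nonempty interior, this interior is disjoint from the convex set $\{(x,t):t\leq m-\Xi(x)\}$, and the Hahn--Banach separation theorem produces a closed separating hyperplane; continuity at $x_0$ is used again to show the hyperplane is non-vertical, and the resulting linear functional is the $x^*$ achieving the maximum. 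Without weak duality, the separation argument, and the non-verticality step, there is no proof of the stated theorem here.
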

 \begin{remark}
 It is a part of both theorems above that the stated maximum is attained. 
 \end{remark}

 \begin{proof}[Proof of Theorem \ref{thm:FRdualQuadraticForms}]  
In our application of the Fenchel-Rockafellar theorem, we will take $X = S(E_0)$, regarded as a Hilbert space with respect to the usual Hilbert-Schmidt inner product.  In this case,  we also identify $X^* = S(E_0)$.   So, for $K_i \in S^+(E_i)$, $i=1, \dots, k$, given and $F \in S(E_0)$,   define the functionals
 $$
 \Xi(F) := \inf_{
 \substack{V_1\in S^+(E_1), \dots, V_k \in S^+(E_k) : \\  \langle V_{\mathbf{c}} x, x\rangle  \geq \langle Fx,x\rangle, ~\forall x\in E_0
 }}  \sum_{i=1}^k c_i \langle V_i, K_i\rangle_{\HS}.
 $$
 and
 $$
 \Theta(F) := \inf_{
 \substack{U_1\in S^+(E^1), \dots, U_m \in S^+(E^j): \\  \langle U_{\mathbf{d}}   B \Lc x  ,  B \Lc  x  \rangle  \leq \langle F x,x\rangle, ~\forall x\in E_0
 }}  \sum_{j=1}^m d_j \log \det U_j^{-1},
 $$
 with the convention that $\Theta(F) = +\infty$ if $F\notin S^+(E_0)$ (since the infimum will be over an empty set). 

 It is easy to see that both $\Theta :  S(E_0)\to \R$ and $\Xi :  S(E_0)\to \R$ are proper convex functions, with $\Theta(\operatorname{id}_{E_0})<+\infty$ and $\Xi(\operatorname{id}_{E_0}) <+\infty$.  It is straightforward to  check the continuity of $\Theta$ at $\operatorname{id}_{E_0}$, so the  hypotheses of Theorem \ref{thm:Fenchel-Rockafellar } are fulfilled.

  Let $M_0$ denote the infimum in the RHS of \eqref{FenchelMaxCoupling}, and observe that definitions easily imply 
   \begin{align*}
 &\inf_{F\in S(E_0) }\Big[\Theta(F) + \Xi(F) ] = M_0.
 \end{align*}
So, we turn our attention  toward computing 
$$\max_{H \in S( E_0)}\Big[-\Theta^*(-H) - \Xi^*(H) \Big].$$
  To this end, we claim that 
\begin{align}
 \Xi^*(H) \geq 
  \begin{cases}
   0 & \mbox{if}~H  \in \Pi(K_1, \dots, K_k) \\
   +\infty & \mbox{otherwise.}
   \end{cases} \label{XiConj}
\end{align}
Indeed, 
 \begin{align*}
   \Xi^*(H) = \sup_{F \in S(E_0)} \left( \langle H,F\rangle - \Xi(F)\right) &\geq   \sup_{F \in S^+(E_0) } \left( \langle H,- F\rangle - \Xi(- F)\right) \\
   &=   \sup_{F \in S^+(E_0) } - \langle H, F\rangle
   =  \begin{cases}
   0 & \mbox{if}~H\in S^+(E_0) \\
   +\infty & \mbox{otherwise.}
   \end{cases}
\end{align*}
Next, define $H_i := \pi_{E_i} H \pi^*_{E_i}$ and note that 
 \begin{align*}
   \Xi^*(H) = \sup_{F \in S(E_0)} \left( \langle H,F\rangle - \Xi(F)\right) 
   &\geq \sup_{F_i \in S(E_i) } \left( \langle H, \pi_{E_i}^* F_i \pi_{E_i} \rangle - \Xi(\pi_{E_i}^* F_i \pi_{E_i} ) \right) \\
   &= \sup_{F_i \in S(E_i) }  \langle H_i - K_i , F_i\rangle  = 
   \begin{cases}
   0 & \mbox{if}~H_i = K_i \\
   +\infty & \mbox{otherwise.}
   \end{cases}
\end{align*}
 This proves \eqref{XiConj}.   Hence, we may conclude
 \begin{align*}
 &\max_{H \in S( E_0)}\Big[-\Theta^*(-H) - \Xi^*(H) \Big] \\
 &\leq   \sup_{K \in \Pi(K_1, \dots, K_k)}\Big[-\Theta^*(-K)   \Big] \\
 &=    \sup_{K \in \Pi(K_1, \dots, K_k)} \inf_{F\in S^+(E_0)} \left[\langle K,F\rangle + \Theta(F) \right]\\
 &=  \sup_{K \in \Pi(K_1, \dots, K_k)}  \inf_{(U_j\in S^+(E^j))_{1\leq j\leq m}} 
\left(  \sum_{j=1}^m d_j \langle K,  \Lc B_j^* U_j B_j \Lc \rangle   -\sum_{j=1}^m d_j \log \det U_j\right) \\
  &=  \sup_{K \in \Pi(K_1, \dots, K_k)}   \sum_{j=1}^m d_j 
  \left( \inf_{(U_j\in S^+(E^j))_{1\leq j\leq m}} \left( 
 \langle B_j \Lc K \Lc B_j^*  ,  U_j   \rangle   -\sum_{j=1}^m d_j \log \det U_j\right) \right) \\
 &= \sup_{K \in \Pi(K_1, \dots, K_k)}   \sum_{j=1}^m d_j     \log \det \left( B_j \Lc K \Lc B_j^* \right) + \sum_{j=1}^m d_j  \dim(E^j) . 
   \end{align*}
It is straightforward to argue that that the supremum is attained using the facts that  $\Pi(K_1, \dots, K_k)$ is compact and that $B_j \Lc K \Lc B_j^*$ is uniformly bounded over all $K\in \Pi(K_1, \dots, K_k)$.   

 Therefore, invoking Theorem \ref{thm:Fenchel-Rockafellar }, we have shown
 \begin{align*}
M_0 
 &\leq \max_{K \in \Pi(K_1, \dots, K_k)}   \sum_{j=1}^m d_j     \log \det \left( B_j \Lc K \Lc B_j^* \right) + \sum_{j=1}^m d_j  \dim(E^j). 
 \end{align*}
   
The reverse direction is considerably simpler; consider any $K \in \Pi(K_1, \dots, K_k)$.  Then, 
\begin{align}
&\sum_{j=1}^m d_j     \log \det \left( B_j \Lc K \Lc B_j^* \right) + \sum_{j=1}^m d_j  \dim(E^j) \notag \\
&\leq \inf_{(U_j\in S^+(E^j))_{1\leq j\leq m} }\sum_{j=1}^m d_j \langle  B_j \Lc K  \Lc B_j^*, U_j \rangle - \sum_{j=1}^m d_j \log \det U_j. \label{almostDone}
\end{align}
Note that, for any $U_j\in S^+(E^j),1\leq j\leq m$, if $V_i\in S^+(E_i),1\leq i\leq k$   satisfy
\begin{align}
\langle U_{\mathbf{d}}   B \Lc x  ,  B \Lc  x  \rangle     \leq  \langle V_{\mathbf{c}} x, x\rangle   ~~~~\forall x\in  E_0, \label{quadGauss}
\end{align}
then it will hold that
\begin{align}
\sum_{j=1}^m d_j \langle  B_j \Lc K \Lc B_j^*, U_j \rangle_{\HS} \leq  \sum_{i=1}^k c_i  \langle V_i, K_i\rangle_{\HS}. \label{CovEst}
\end{align}
To see that this is indeed the case, let $x$ be a centered gaussian random vector in $E_0$ with covariance $K$, and take expectations of both sides in  \eqref{quadGauss}.   So, combining estimates yields
$$
\sum_{j=1}^m d_j     \log \det \left( B_j \Lc K \Lc B_j^* \right) + \sum_{j=1}^m d_j  \dim(E^j)\leq M_0,
$$
completing the proof of the theorem. 
\end{proof}

\subsubsection{Completion of proof of Proposition \ref{prop:OptimCond}}

The first step in completing the proof of Proposition \ref{prop:OptimCond} is to note an equivalent dual formulation of the optimization problem defining $D_g(\mathbf{c},\mathbf{d},\mathbf{B})$.  This formulation relies on the strong min-max identity of the previous subsection.  Through an analysis of the equality cases, we ultimately arrive at Proposition \ref{prop:OptimCond}.

\begin{theorem}\label{thm:equivFormsDg}
Fix $-\infty < C \leq + \infty$.  The following statements are equivalent:
\begin{enumerate}[1)]
\item 
For all $V_i\in S^+(E_i),1\leq i\leq k$ and $U_j\in S^+(E^j), 1\leq j\leq m$ satisfying 
\begin{align}
\Lc B^* U_{\mathbf{d}} B \Lc  \leq V_{\mathbf{c}} \label{positivityIneq}
\end{align}
it holds that
\begin{align}
\sum_{j=1}^m d_j \log \det U_j \leq C + \sum_{i=1}^k c_i \log \det V_i. \label{detObjective}
\end{align}

\item For all $K_i\in S^+(E_i),1\leq i\leq k$,
\begin{align}
 \sum_{i=1}^k c_i \log \det K_i \leq C +  \max_{K\in \Pi(K_1, \dots, K_k)} \sum_{j=1}^m d_j \log \det (B_j \Lc K \Lc B_j^*). \label{couplingObjective}
 \end{align}
 \end{enumerate}
 \end{theorem}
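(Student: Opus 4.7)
The plan is to identify both statements as Legendre-Fenchel duals bridged by Theorem \ref{thm:FRdualQuadraticForms}. If $C = +\infty$ both statements are vacuous, so assume $C < +\infty$. As a first step I would check that in this regime either statement forces \eqref{eq:consistency}: the simultaneous scaling $(V_i, U_j) \mapsto (tV_i, tU_j)$ preserves the constraint \eqref{positivityIneq} and changes the objective in (1) by $(\sum_j d_j \dim E^j - \sum_i c_i \dim E_i)\log t$, and the analogous scaling $K_i \mapsto tK_i$ changes both sides of (2) by the corresponding multiples of $\log t$; for either inequality to hold with finite $C$ and arbitrary $t>0$, the two dimension sums must coincide. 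Assume \eqref{eq:consistency} henceforth.

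For the direction (1) $\Rightarrow$ (2), fix $K_i \in S^+(E_i)$ and apply Theorem \ref{thm:FRdualQuadraticForms} to rewrite
\[\max_{K\in \Pi(K_1,\dots,K_k)}\sum_j d_j \log\det(B_j\Lc K\Lc B_j^*) = -\sum_j d_j \dim(E^j) + \inf_{(V,U)}\left[\sum_i c_i \langle V_i, K_i\rangle_{\HS} - \sum_j d_j \log\det U_j\right],\]
with the infimum taken over constrained pairs. For each such pair, statement (1) yields $-\sum_j d_j \log\det U_j \geq -C - \sum_i c_i \log\det V_i$, so the infimum is bounded below by $\inf_V \sum_i c_i(\langle V_i, K_i\rangle_{\HS} - \log\det V_i) - C$. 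The latter infimum is effectively unconstrained: any $V_i \in S^+(E_i)$ admits a compatible $U$ (take each $U_j$ sufficiently small), so the feasible $V$-set is all of $\prod_i S^+(E_i)$. The standard matrix Legendre identity $\inf_{V \in S^+(E)} [\langle V, K\rangle_{\HS} - \log\det V] = \dim(E) + \log\det K$ (attained at $V = K^{-1}$) then produces a lower bound of $\sum_i c_i(\dim E_i + \log\det K_i) - C$. Cancelling the dimension terms via \eqref{eq:consistency} delivers (2).

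The reverse direction is symmetric. Fix $(V,U)$ constrained. For every $K_i \in S^+(E_i)$, Theorem \ref{thm:FRdualQuadraticForms} supplies
\[\sum_i c_i\langle V_i, K_i\rangle_{\HS} - \sum_j d_j \log\det U_j \geq \sum_j d_j \dim(E^j) + \max_{K\in \Pi(K_1,\dots,K_k)} \sum_j d_j \log\det(B_j\Lc K\Lc B_j^*).\]
Bounding the max below by $\sum_i c_i \log\det K_i - C$ via (2), rearranging, and minimizing the resulting inequality over $K_i \in S^+(E_i)$ using the same conjugacy (now minimized at $K_i = V_i^{-1}$, giving $\sum_i c_i(\dim E_i + \log\det V_i)$) yields $\sum_j d_j \log\det U_j \leq \sum_i c_i\log\det V_i + C$ after a second invocation of \eqref{eq:consistency}. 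This is (1).

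The only delicate point is the inf/sup bookkeeping on both sides: in each direction, after invoking the hypothesis, the bound depends on only one of $V$ or $K$, and I need to verify that the apparent side-condition of admitting a compatible partner is no restriction at all, so the resulting infimum reduces to the clean matrix conjugacy above. Matching the dimension constants that appear in Theorem \ref{thm:FRdualQuadraticForms} against the consistency relation \eqref{eq:consistency} is the only other ingredient; no new ideas beyond the strong min-max theorem are needed.
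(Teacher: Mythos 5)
Your proposal is correct, and the forward direction 1)$\Rightarrow$2) is essentially the paper's argument: invoke the strong min--max identity of Theorem \ref{thm:FRdualQuadraticForms} to pass from the coupling maximum to the constrained infimum, apply 1) to eliminate $U$, and finish with the conjugacy $\log\det M\leq \Tr(M)-\dim$ together with \eqref{eq:consistency} (the paper phrases this via a near-optimal pair $(V,U)$ and an $\epsilon$, rather than your infimum over the projected feasible set, but the content is identical, and your observation that any $V$ admits a compatible $U$ is not even needed for a lower bound). Where you diverge is 2)$\Rightarrow$1): the paper does not reuse Theorem \ref{thm:FRdualQuadraticForms} there, but instead runs a fresh Lagrangian weak-duality computation --- introducing a multiplier $M\in S^+(E_0)$, optimizing the inner supremum explicitly at $U_j=(B_j\Lc M\Lc B_j^*)^{-1}$, and then restricting $M$ to $\Pi(V_1^{-1},\dots,V_k^{-1})$ --- whereas you reuse the easy (weak-duality) half of Theorem \ref{thm:FRdualQuadraticForms} and then minimize over $K$ via the same scalar Legendre identity at $K_i=V_i^{-1}$. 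The two computations are the same in substance, and yours is arguably the more symmetric packaging; what the paper's version buys is the explicit intermediate chain \eqref{critObjective}--\eqref{criticalInequality}, whose equality-case analysis is precisely what drives the subsequent proof of Proposition \ref{prop:OptimCond}, so your streamlining would have to be partially undone later. One small housekeeping item you omit: the paper first disposes of the case where some $B_j$ fails to be surjective (so that $B_j\Lc K\Lc B_j^*$ is singular and the maximum in 2) is $-\infty$), by noting that both statements then force $C=+\infty$; you handle the analogous reduction for \eqref{eq:consistency} but should add the surjectivity reduction before quoting Theorem \ref{thm:FRdualQuadraticForms}.
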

 \begin{remark}\label{rmk:Ceq2D}
By definition of $D_g(\mathbf{c},\mathbf{d},\mathbf{B})$ and the asserted equivalence, the best constant $C$ in each of the above inequalities is equal to $2 D_g(\mathbf{c},\mathbf{d},\mathbf{B})$.
 \end{remark}
\begin{remark}\label{rmk:differenceFromClassical}
To appreciate the difference between the forward-reverse Brascamp-Lieb inequality and the classical forward and reverse inequalities, we invite the reader to prove Theorem \ref{thm:equivFormsDg} in the special case of $k=1$ (which is symmetric with $m=1$).  This is a simple exercise, requiring only a few lines of elementary linear algebra.  Since the maximum in \eqref{couplingObjective} becomes trivial, the major difficulty of the characterization (i.e., the strong min-max theorem of the previous section) is avoided.  
\end{remark}
 \begin{proof}
 For purposes of the proof, we assume each   $(B_j)_{1\leq j \leq m}$ is surjective, ensuring invertibility of $(B_j \Lc K \Lc B_j^*)$ for $K\in S^+(E_0)$. If any of the $B_j$ fail to be surjective, it is easy to see that the best constant $C$ in both 1) and 2) will be equal to $+\infty$, thereby handling this exceptional case.  Moreover, we may assume that  \eqref{eq:consistency} holds.  Again, if this is not the case, then by rescaling the various operators by a common factor, we see that the best constant $C$ in both 1) and 2) will be equal to $+\infty$.

\emph{Proof of 1)$\Rightarrow$2)}. Fix any $\epsilon>0$.  By Theorem  \ref{thm:FRdualQuadraticForms}, there are $V_i\in S^+(E_i),1\leq i\leq k$ and $U_j\in S^+(E^j), 1\leq j\leq m$ satisfying \eqref{positivityIneq} such that 
\begin{align*}
&\max_{K\in \Pi(K_1, \dots, K_k)} \sum_{j=1}^m d_j \log \det (B_j \Lc K \Lc B_j^*) + \sum_{j=1}^m d_j \dim(E^j)\\
&\geq  \sum_{i=1}^k c_i \langle V_i ,K_i\rangle_{\HS}-\sum_{j=1}^m d_j \log \det U_j -   \epsilon.
\end{align*}
Hence, we have
\begin{align*}
&\sum_{i=1}^k c_i \log \det K_i - \max_{K\in \Pi(K_1, \dots, K_k)} \sum_{j=1}^m d_j \log \det (B_j \Lc K \Lc B_j^*)\\
&\leq \sum_{i=1}^k c_i \log \det K_i + \sum_{j=1}^m d_j \dim(E^j) +\sum_{j=1}^m d_j \log \det U_j - \sum_{i=1}^k c_i \langle V_i ,K_i\rangle_{\HS} + \epsilon\\
&\leq \sum_{i=1}^k c_i \log \det K_i + \sum_{j=1}^m d_j \dim(E^j)  + C + \sum_{i=1}^k c_i \log \det V_i - \sum_{i=1}^k c_i \langle V_i ,K_i\rangle_{\HS} + \epsilon\\
&\leq C + \epsilon,
\end{align*}
where the penultimate inequality is \eqref{detObjective}, and the final inequality is due to the elementary inequality $\log\det M \leq \Tr(M)-\dim(E_i)$ for $M\in S^+(E_i)$ and the scaling condition \eqref{eq:consistency}. Since $\epsilon$ was arbitrary, it follows that 1)$\Rightarrow$ 2). 

\emph{Proof of 2)$\Rightarrow$1)}.
Fix any $V_i\in S^+(E_i), i=1, \dots, k$.  By the method of Lagrange multipliers and the weak max-min inequality, we have
\begin{align*}
&\sup_{\substack{(U_j\in S^+(E^j))_{1\leq j\leq m} :\\
\Lc B^* U_{\mathbf{d}} B \Lc  \leq V_{\mathbf{c}}  } }  ~~\sum_{j=1}^m d_j \log \det U_j \notag \\
&\leq \inf_{M \in S^+(E_0) }~~ \sup_{( U_j\in S^+(E^j))_{1\leq j\leq m}}  ~  \left[ \sum_{j=1}^m d_j \log \det U_j  + \langle M ,  V_{\mathbf{c}} \rangle_{\HS}   - \langle M ,  \Lc B^* U_{\mathbf{d}} B \Lc \rangle_{\HS}   \right]\\
&= \inf_{M \in S^+(E_0) }~~ \sup_{( U_j\in S^+(E^j))_{1\leq j\leq m}}  ~  \left[ \sum_{j=1}^m d_j \log \det U_j  + \langle M ,   V_{\mathbf{c}} \rangle_{\HS}  - \sum_{j=1}^m d_j \langle B_j \Lc M \Lc B_j^* ,   U_j \rangle_{\HS}   \right].
\end{align*}
Now, we note that the gradient of the objective above with respect to $U_j$ is given by $d_j U_j^{-1} - B_j \Lc M \Lc B_j^*$.  So, taking $U_j = (B_j \Lc M \Lc B_j^*)^{-1}$ achieves the maximum in the inner optimization problem. Making this substitution, we may continue as
\begin{align}
&\sup_{\substack{(U_j\in S^+(E^j))_{1\leq j\leq m} :\\
\Lc B^* U_{\mathbf{d}} B \Lc  \leq V_{\mathbf{c}}  } }  ~~\sum_{j=1}^m d_j \log \det U_j  \notag \\
&= \inf_{M \in S^+(E_0) }~  \sum_{j=1}^m d_j \log \det \left( B_j \Lc M \Lc B_j^*\right)^{-1}  - \sum_{j=1}^m d_j \dim(E^j)    + \langle M ,  V_{\mathbf{c}} \rangle_{\HS} \label{critObjective}\\
&\leq - \max_{M \in \Pi(V^{-1}_1,\dots, V^{-1}_k)}  \sum_{j=1}^m d_j \log \det \left( B_j \Lc M \Lc B_j^*\right) \label{criticalInequality}\\
&\leq C - \sum_{i=1}^k c_i \log \det V^{-1}_i = C + \sum_{i=1}^k c_i \log \det V_i  \notag
\end{align}
where the first inequality follows since we are optimizing over a smaller set, and for $M \in \Pi(V^{-1}_1,\dots, V^{-1}_k)$ it holds that 
$$ \langle M ,  V_{\mathbf{c}} \rangle_{\HS} = \sum_{i=1}^k c_i   \langle V_i^{-1},V_i\rangle_{\HS} = \sum_{i=1}^k c_i \dim(E_i) =  \sum_{j=1}^m d_j \dim(E^j).$$
 The second inequality is an application of \eqref{couplingObjective}. 
\end{proof}

Finally, we are in a position to complete the proof of Proposition \ref{prop:OptimCond}.
\begin{proof}[Proof  of Proposition \ref{prop:OptimCond}]
If the datum $(\mathbf{c},\mathbf{d},\mathbf{B})$ is gaussian-extremizable, then \eqref{eq:consistency} must hold, as remarked at the start of the proof of Theorem \ref{thm:equivFormsDg}.  Furthermore, for  extremizers $V_i\in S^+(E_i),  1\leq i \leq  k$, we get equality throughout the second part of the proof of Theorem \ref{thm:equivFormsDg} for the optimal constant $C = 2D_g(\mathbf{c},\mathbf{d},\mathbf{B})$; in particular, equality will be attained in \eqref{criticalInequality}.  Therefore, we remark that any
$$
\Pi \in \arg  \max_{M \in \Pi(V^{-1}_1,\dots, V^{-1}_k)}  \sum_{j=1}^m d_j \log \det \left( B_j \Lc M \Lc B_j^*\right)
$$
is  positive-semidefinite and achieves the minimum in \eqref{critObjective}, after replacing the infimum over positive-definite operators with a minimum over  positive-semidefinite operators.  Letting $\Pi$ be as defined above, consider any positive-semidefinite $A\in S(E_0)$. For  $\varepsilon>0$, Taylor expansion and assumed optimality of $\Pi$ gives 
\begin{align*}
&\sum_{j=1}^m d_j \log \det \left( B_j \Lc (\Pi + \varepsilon A) \Lc B_j^*\right)^{-1}  - \sum_{j=1}^m d_j \dim(E^j)    + \langle (\Pi + \varepsilon A) ,  V_{\mathbf{c}} \rangle_{\HS}\\
&=\sum_{j=1}^m d_j \log \det \left( B_j \Lc \Pi \Lc B_j^*\right)^{-1}  - \sum_{j=1}^m d_j \dim(E^j)    + \langle \Pi,  V_{\mathbf{c}} \rangle_{\HS}\\
&+\varepsilon  \left( -\sum_{j=1}^m d_j \left\langle A , \Lc B_j^*\left( B_j \Lc \Pi \Lc B_j^*\right)^{-1} B_j \Lc     \right\rangle_{\HS} + \langle A,  V_{\mathbf{c}} \rangle_{\HS}\right)  + o(\varepsilon)\\
&\geq \sum_{j=1}^m d_j \log \det \left( B_j \Lc \Pi   \Lc B_j^*\right)^{-1}  - \sum_{j=1}^m d_j \dim(E^j)    + \langle  \Pi   ,  V_{\mathbf{c}} \rangle_{\HS}.
\end{align*}
By sending $\varepsilon \downarrow 0$ and letting $A\geq 0$ vary, we find that it must hold that 
\begin{align}
\sum_{j=1}^m d_j \Lc B_j^*\left( B_j \Lc \Pi \Lc B_j^*\right)^{-1} B_j \Lc \leq V_{\mathbf{c}} \label{claimedOptimality}
\end{align}
as claimed. 

\begin{remark}\label{rmk:restrictionOfGaussExtremizableCondtions}
If we  consider $A$ such that $\ker(A)\subseteq \ker(\Pi)$, then the same inequalities above hold for all $\varepsilon \in \R$ sufficiently small, since this ensures $\Pi +\varepsilon A$ will remain positive-semidefinite.  As a result, we have equality in \eqref{claimedOptimality} on the restriction of both sides to $\ker(\Pi)^{\perp}$. This provides an alternate way to establish the conclusion of Remark \ref{rmk:restrictSubspace}.
\end{remark}
\end{proof}

Although it will not be needed for our purposes,  there are several equivalent characterizations of gaussian extremizers which can be stated in analogy to  \cite[Proposition 3.6]{bennett2008brascamp} for the direct Brascamp-Lieb inequality.  The statement is provided below to give a  comprehensive description of the structure of gaussian extremizers. It provides a convenient means of certifying the optimality of gaussian extremizers, which is required to compute $D_g(\mathbf{c},\mathbf{d},\mathbf{B})$.

\begin{proposition}\label{prop:EquivOptimalConds}
Fix a datum $(\mathbf{c},\mathbf{d},\mathbf{B})$, and $(U_j \in S^+(E^j) )_{1\leq j \leq m}, (V_i \in S^+(E_i))_{1\leq i\leq k}$.   The following statements are equivalent:
\begin{enumerate}[(i)]
\item\label{l1}  $(U_j)_{1\leq j \leq m}, (V_i)_{1\leq i\leq k}$ is a global maximum in \eqref{defDg} subject to \eqref{Qconstraint};
\item\label{l2}    $(U_j)_{1\leq j \leq m}, (V_i)_{1\leq i\leq k}$ is a local maximum in \eqref{defDg} subject to \eqref{Qconstraint};
\item\label{l3} There exists $\Pi \in \Pi(V_1^{-1}, \dots, V_k^{-1})$ such that
\begin{align}
U_j^{-1} = B_j \Lc \Pi \Lc B_j^*, ~~~1 \leq j\leq m. \label{UjCondOpt}
\end{align}
Moreover, for 
\begin{align}
\Theta := V_{\mathbf{c}} - \sum_{j=1}^m d_j \Lc B_j^* U_jB_j\Lc ,  \label{ThetaDefOpt}
\end{align}
we have $\Theta\geq 0$ and $ \Theta x =0$ for any $x\in \Pi E_0$. 
\item\label{l4} The dimension condition \eqref{eq:consistency} is satisfied, and there exists  $\Pi \in \Pi(V_1^{-1}, \dots, V_k^{-1})$ such that \eqref{UjCondOpt} holds. Moreover, $\Theta\geq 0$, with $\Theta$ defined as in \eqref{ThetaDefOpt}. 
\item\label{l5} The dimension condition \eqref{eq:consistency} is satisfied, and there exists  $\Pi \in \Pi(V_1^{-1}, \dots, V_k^{-1})$ such that 
\begin{align}
U_j^{-1} \geq B_j \Lc \Pi \Lc B_j^* , ~~~1 \leq j\leq m. \label{e3}
\end{align}
Moreover, $\Theta\geq 0$ and $\Pi^{1/2} \Theta \Pi^{1/2}=0$, with $\Theta$ defined as in \eqref{ThetaDefOpt}. 
\end{enumerate}

\end{proposition}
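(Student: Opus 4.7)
My plan is to close the cycle (i)$\Rightarrow$(ii)$\Rightarrow$(iii)$\Rightarrow$(iv)$\Rightarrow$(v)$\Rightarrow$(iii)$\Rightarrow$(i). The direction (i)$\Rightarrow$(ii) is immediate, and (iii)$\Rightarrow$(i) is a direct application of Theorem~\ref{thm:FRBLgaussianExtiff}: given (iii), the structural conditions therein identify $(V_i),(U_j)$ with $U_j=(B_j\Lc\Pi\Lc B_j^*)^{-1}$ as gaussian extremizers, provided the dimension condition \eqref{eq:consistency} holds; the latter is supplied by (iii)$\Rightarrow$(iv) below.

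For (ii)$\Rightarrow$(iii) I would form the Lagrangian
$$\mathcal{L}(U,V;\Pi) = \tfrac{1}{2}\Big(\sum_{j} d_j \log\det U_j - \sum_{i} c_i \log\det V_i\Big) + \tfrac{1}{2}\langle \Pi, V_{\mathbf{c}} - \Lc B^* U_{\mathbf{d}} B \Lc\rangle_{\HS}$$
with positive semidefinite multiplier $\Pi$. Slater's condition holds (take $V_i$ sufficiently large to make \eqref{Qconstraint} strict), so the KKT conditions are necessary at any local maximum. Stationarity in $U_j$ and $V_i$ yields $U_j^{-1} = B_j\Lc\Pi\Lc B_j^*$ and $\pi_{E_i}\Pi\pi_{E_i}^* = V_i^{-1}$ (so $\Pi\in\Pi(V_1^{-1},\dots,V_k^{-1})$), and complementary slackness $\Pi\Theta = 0$ combined with $\Theta\geq 0$ forces $\Theta x = 0$ for every $x \in \Pi E_0$.

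Both implications (iii)$\Rightarrow$(iv) and (iv)$\Rightarrow$(v) hinge on the same trace identity. Expanding $\Tr(\Pi\Theta)$ via $\pi_{E_i}\Pi\pi_{E_i}^* = V_i^{-1}$ and $U_j^{-1} = B_j\Lc\Pi\Lc B_j^*$ yields $\Tr(\Pi\Theta) = \sum_i c_i\dim(E_i) - \sum_j d_j \dim(E^j)$. Under (iii), $\Theta$ annihilates $\Pi E_0$ so $\Tr(\Pi\Theta)=0$, giving the dimension condition; under (iv), the dimension condition forces $\Tr(\Pi^{1/2}\Theta\Pi^{1/2}) = \Tr(\Pi\Theta) = 0$, and since the integrand is PSD it must vanish. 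The reverse link (v)$\Rightarrow$(iii) upgrades $U_j^{-1}\geq B_j\Lc\Pi\Lc B_j^*$ to equality: tracing $\Pi^{1/2}\Theta\Pi^{1/2} = 0$ and using the dimension condition in (v) yields $\sum_j d_j \Tr(U_j B_j\Lc\Pi\Lc B_j^*) = \sum_j d_j \dim(E^j)$, while $U_j^{-1}\geq B_j\Lc\Pi\Lc B_j^*$ with $U_j>0$ gives $\Tr(U_j B_j\Lc\Pi\Lc B_j^*)\leq \dim(E^j)$ term-by-term; positivity of the $d_j$ forces equality for each $j$. The PSD matrix $U_j^{1/2}(U_j^{-1} - B_j\Lc\Pi\Lc B_j^*)U_j^{1/2}$ then has trace zero and vanishes, so $U_j^{-1} = B_j\Lc\Pi\Lc B_j^*$; the condition $\Theta x=0$ on $\Pi E_0$ follows from $\Pi^{1/2}\Theta\Pi^{1/2}=0$ together with $\Theta\geq 0$ exactly as in the KKT step.

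I expect the main obstacle to be (ii)$\Rightarrow$(iii), specifically, verifying an appropriate constraint qualification for the linear matrix inequality \eqref{Qconstraint} that legitimizes the KKT computation at a local extremum; care is needed because the feasible set is open in each $S^+(E_i)$ and $S^+(E^j)$, and the objective is concave in $U$ but convex in $V$. The remaining implications are bookkeeping with traces, positive semidefiniteness, and the structural identification already provided by Theorem~\ref{thm:FRBLgaussianExtiff}.
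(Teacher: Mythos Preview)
Your proposal is correct and mirrors the paper's proof: the trace identities for (iii)$\Leftrightarrow$(iv)$\Leftrightarrow$(v) are identical, (iii)$\Rightarrow$(i) is exactly the appeal to Theorem~\ref{thm:FRBLgaussianExtiff} (equivalently, the conclusion of Section~\ref{sec:Lehec-Style}), and your KKT computation for (ii)$\Rightarrow$(iii) is the paper's cone-duality argument in different dress. The paper sidesteps your constraint-qualification worry by arguing directly that local optimality forces $\langle \mathrm{grad}, d\rangle \leq 0$ for every $d$ in the (closed convex) feasible cone $\mathcal{C}^*$, whence $-\mathrm{grad}\in(\mathcal{C}^*)^*=\mathcal{C}$ yields the multiplier $\Pi$ without invoking Slater.
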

\begin{proof} 

\eqref{l1}$\Longrightarrow$\eqref{l2} is trivial.

\eqref{l3}$\Longrightarrow$\eqref{l4}: By the assumption of \eqref{l3} and the fact that $\Pi^{1/2}E_0 = \Pi E_0$, we have $\Tr(\Pi^{1/2}\Theta\Pi^{1/2})=0$.  Using \eqref{UjCondOpt} and \eqref{ThetaDefOpt} and the cyclic property of the trace, we find
\begin{align}
\sum_{i=1}^k c_i\dim(E_i)=\sum_{j=1}^m d_j\dim(E^j).\label{dimCondEqCon}
\end{align}

\eqref{l4}$\Longrightarrow$\eqref{l3}:  It suffices to show that $\Pi^{1/2}\Theta\Pi^{1/2}=0$.
 As argued above, the dimension condition is equivalent to $\Tr(\Pi^{1/2}\Theta\Pi^{1/2})=0$.
Moreover, the assumption of \eqref{l4} requires that $\Pi^{1/2}\Theta\Pi^{1/2}\geq 0$.
Since a nonnegative matrix has trace 0 only if it is the zero matrix, we have $\Pi^{1/2}\Theta\Pi^{1/2}=0$ as desired.

\eqref{l3}$\Longrightarrow$\eqref{l1}: This is the conclusion of Section \ref{sec:Lehec-Style}.

\eqref{l3}$\Longrightarrow$\eqref{l5}: The proof of the dimension condition is the same as the \eqref{l3}$\Longrightarrow$\eqref{l4} part.

\eqref{l5}$\Longrightarrow$\eqref{l3}: By the assumption of \eqref{l5} we have 
\begin{align}
0=\Tr(\Pi^{1/2}\Theta\Pi^{1/2})
&=\sum_{i=1}^k c_i\dim(E_i)-
\sum_{j=1}^md_j\Tr(U_j(B_j\Lambda_{\bf c}\Pi\Lambda_{\bf c}B_j^*))\notag
\\
&\ge \sum_{i=1}^k c_i\dim(E_i)-
\sum_{j=1}^md_j\Tr(U_j U_j^{-1})
\label{e10}
\\
&=0 \notag
\end{align}
where \eqref{e10} follows from \eqref{e3}.  If \eqref{e3} is not equality for some $j$, then \eqref{e10} cannot achieve equality. Therefore \eqref{e3} must achieve equality for all $j$.

\eqref{l2}$\Longrightarrow$\eqref{l3}:  By definition, we have  
\begin{align}
D_g(\mathbf{c},\mathbf{d},\mathbf{B}) := \frac{1}{2}\sup \left( \sum_{j=1}^m d_j \log \det U_j - \sum_{i=1}^k c_i \log \det V_i\right),  \label{DgOpt}
\end{align}
where the supremum is over $V_i \in S^+(E_i)$ and $ U_j \in S^+(E^j)$ satisfying the constraint  
\begin{align}
\Theta:= V_{\bf c}
- \Lambda_{\bf c}B^*
U_{\bf d}
B\Lambda_{\bf c}   \geq 0 .
\label{e4}
\end{align}
We need to show that if given $(V_i)_{1\leq i\leq k }$ and $(U_j)_{1\leq j\leq m}$ is a local maximum of \eqref{DgOpt},
then there exists $\Pi$ with the claimed properties. To this end, let  $S_0^+(E_0)$ denote the set of positive semidefinite operators on $E_0$,  and define the  convex cone:
\begin{align}
\mathcal{C}:=
\{(c_1\Pi_1,\dots,c_k\Pi_k,-d_1B_1\Lambda_{\bf c}\Pi\Lambda_{\bf c} B_1^*,\dots,-d_mB_m \Lambda_{\bf c}\Pi\Lambda_{\bf c} B_m^* )\,\colon \Pi\in S_0^+(E_0) \},
\end{align}
where $\Pi_i: E_i \to E_i$ is defined as $\Pi_i = \pi_{E_i}\Pi \pi^*_{E_i}$.
Recall that the dual cone $\mathcal{C}^*$ of $\mathcal{C}$ is defined as the set of all vectors whose inner product with any element in $\mathcal{C}$ is nonnegative.
Here, we have
\begin{align}
\mathcal{C}^*
:=\{(V_1,\dots,V_k,
U_1,\dots,U_m)\colon\,
V_i\in S^+_0(E_i) ,
U_j\in S^+_0(E^j),
V_{\bf c}-\Lambda_{\bf c}B^*U_{\bf d}B\Lambda_{\bf c}
\geq 0
\}.
\end{align}
Note that this is indeed the dual cone since the constraint $V_{\bf c}-\Lambda_{\bf c}B^*U_{\bf d}B\Lambda_{\bf c}
\geq 0$ can be rewritten as 
\begin{align}
\left\langle\Pi, V_{\bf c}-\Lambda_{\bf c}B^*U_{\bf d}B\Lambda_{\bf c}\right\rangle_{\HS }\geq 0, \hspace{1cm} \mbox{for all~}\Pi\in S^+_0(E_0), \label{ThGEQ}
\end{align}
or equivalently,
$\sum_{i=1}^k\left\langle c_i\Pi_i,V_i\right\rangle_{\HS}
+\sum_{j=1}^m\left\langle -d_j
B_j\Lambda_{\bf c}\Pi\Lambda_{\bf c}B_j^*,U_j\right\rangle_{\HS}\geq 0$, 
for all $\Pi\in S^+_0(E_0)$.
Note that the constraint defining $\mathcal{C}^*$ is the same as the constraint \eqref{e4} for the optimization \eqref{DgOpt}.
Now, from \eqref{DgOpt}, we see that the gradient of the objective function with respect to $(V_1,\dots,V_k,U_1,\dots,U_m)$ is equal to $(-\frac{c_1}{2}V_1^{-1},\dots,-\frac{c_k}{2}V_k^{-1},
\frac{d_1}{2}U_1^{-1},\dots,\frac{d_m}{2}U_m^{-1})$.
The assumed local optimality implies that the inner product of the gradient at $(V_i)_{1\leq i\leq k }$, $(U_j)_{1\leq j\leq m}$ with any element in $\mathcal{C}^*$ is non-positive,
hence the negative gradient belongs to the dual of $\mathcal{C}^*$.
Recall that the double-dual of a closed convex cone is itself,
hence the negative gradient belongs to $\mathcal{C}$,
and therefore (upon absorbing a factor $2$ in $\Pi$) we find $\Pi \in S_0^+(E_0)$ such that
\begin{align}
\Pi_i&= V_i^{-1},\quad i=1,\dots,k,
\label{e_8}
\\
B_j\Lambda_{\bf c}\Pi\Lambda_{\bf c}B_j^*
&=U_j^{-1},\quad j=1,\dots,m.
\label{e_9}
\end{align}
Note that \eqref{e_8} together with $\Pi \in S_0^+(E_0)$ is equivalent to $\Pi \in \Pi(V_1^{-1}, \dots, V_k^{-1})$, so it only remains to   show that  $\Theta x=0$ for $x\in \Pi E_0$, or equivalently $\Pi^{1/2} \Theta \Pi^{1/2}=0$.  The assumed local optimality implies that the dimension condition \eqref{dimCondEqCon} must hold, else scaling  $(V_i)_{1\leq i\leq k }$, $(U_j)_{1\leq j\leq m}$ by an appropriate common factor will increase the value of the functional being optimized. As noted in the proof of \eqref{l4}$\Longrightarrow$\eqref{l3}, the dimension condition together with \eqref{e4} (i.e., nonnegativity of $\Theta$)  implies $\Pi^{1/2} \Theta \Pi^{1/2}=0$, as desired.
\end{proof}

To conclude this section, we record the following observation which will be needed later.
\begin{lemma}\label{lem:semicontinuityOfgaussian}  Assume the $(B_j)_{1\leq j \leq m}$ are surjective.  
The map 
\begin{align}
 (K_1, \dots, K_k) \longmapsto \left( \sum_{i=1}^k c_i \log \det K_i -  \max_{K\in \Pi(K_1, \dots, K_k)} \sum_{j=1}^m d_j \log \det (B_j \Lc K \Lc B_j^*)\right) \notag
\end{align}
is upper-semicontinuous on $\prod_{i=1}^k S^+(E_i)$ with respect to the  norm topology.
\end{lemma}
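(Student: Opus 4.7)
The plan is to isolate the only nontrivial piece. Since $K_i \mapsto c_i \log\det K_i$ is continuous on $S^+(E_i)$, upper-semicontinuity of the map in question reduces to \emph{lower-}semicontinuity of
$$
F(K_1,\dots,K_k) := \max_{K\in\Pi(K_1,\dots,K_k)} \sum_{j=1}^m d_j \log\det(B_j\Lc K\Lc B_j^*).
$$
I would first verify that the maximum is attained and is finite. The set $\Pi(K_1,\dots,K_k)$ is compact (positive-semidefinite operators with prescribed diagonal blocks), and $K\mapsto \sum_j d_j\log\det(B_j\Lc K\Lc B_j^*)$ is upper-semicontinuous on the positive-semidefinite cone (continuous where positive-definite, and the only possible discontinuity is a jump down to $-\infty$), so the supremum is attained. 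To see $F>-\infty$, evaluate at the \emph{independent coupling} $K=\bigoplus_i K_i \in \Pi(K_1,\dots,K_k)$: then $B_j\Lc K\Lc B_j^*=\sum_i c_i^2 B_{ij}K_i B_{ij}^*$, which is positive-definite because each $K_i$ is positive-definite and the hypothesis that $B_j$ is surjective ensures $\sum_i \operatorname{im}(B_{ij})=E^j$. Consequently, any maximizer $K^*$ satisfies $B_j\Lc K^*\Lc B_j^*\in S^+(E^j)$ for every $j$.

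For the lower-semicontinuity step, let $K_i^{(n)}\to K_i$ in $S^+(E_i)$ and fix a maximizer $K^*\in \Pi(K_1,\dots,K_k)$ as above. The key construction is a block-diagonal ``transport'' operator
$$
T^{(n)} := \bigoplus_{i=1}^k (K_i^{(n)})^{1/2} K_i^{-1/2} : E_0 \to E_0,
$$
and the candidate coupling $\tilde K^{(n)} := T^{(n)} K^* (T^{(n)})^*$. A direct block computation shows
$$
\pi_{E_i} \tilde K^{(n)} \pi_{E_i}^* = (K_i^{(n)})^{1/2} K_i^{-1/2} K_i K_i^{-1/2}(K_i^{(n)})^{1/2} = K_i^{(n)},
$$
so $\tilde K^{(n)}\in \Pi(K_1^{(n)},\dots,K_k^{(n)})$. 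By continuity of the matrix square root on $S^+(E_i)$, $T^{(n)}\to \operatorname{id}_{E_0}$, hence $\tilde K^{(n)}\to K^*$ in norm.

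Combining these ingredients closes the argument: since $B_j\Lc K^*\Lc B_j^*\in S^+(E^j)$, the map $\log\det$ is continuous at this point, so
$$
\liminf_{n\to\infty} F(K_1^{(n)},\dots,K_k^{(n)}) \geq \lim_{n\to\infty} \sum_{j=1}^m d_j\log\det(B_j\Lc \tilde K^{(n)}\Lc B_j^*) = \sum_{j=1}^m d_j\log\det(B_j\Lc K^*\Lc B_j^*) = F(K_1,\dots,K_k).
$$
The main obstacle is really just the construction of $\tilde K^{(n)}$: one needs an approximating coupling in $\Pi(K_1^{(n)},\dots,K_k^{(n)})$ that both has the correct marginals for each $n$ and converges to the \emph{given} optimizer $K^*$. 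The conjugation by $T^{(n)}$ achieves exactly this, and the rest is a compactness/continuity package made available by surjectivity of the $(B_j)$.
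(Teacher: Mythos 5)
Your proof is correct and is essentially the paper's argument made explicit: the paper disposes of the lemma by noting that the coupling-maximum is a pointwise supremum of continuous functions of $(K_1,\dots,K_k)$, which implicitly rests on exactly the reparametrization of couplings across different marginals that your block-diagonal conjugation $T^{(n)} = \bigoplus_i (K_i^{(n)})^{1/2}K_i^{-1/2}$ carries out. Your version also verifies attainment and finiteness of the maximum via the independent coupling and surjectivity of the $B_j$, which the paper leaves implicit, but the underlying mechanism is the same.
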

\begin{proof}
The map $(K_1, \dots, K_k) \longmapsto  \sum_{i=1}^k c_i \log \det K_i$
is continuous on $\prod_{i=1}^k S^+(E_i)$, and we may write 
$$
\max_{K\in \Pi(K_1, \dots, K_k)} \sum_{j=1}^m d_j \log \det (B_j \Lc K \Lc B_j^*) = \sup_{K\in \Pi(K_1, \dots, K_k)\cap S^+(E_0) }  \sum_{j=1}^m d_j \log \det (B_j \Lc K \Lc B_j^*), 
$$
which is lower-semicontinuous in $(K_1, \dots, K_k)$ on $\prod_{i=1}^k S^+(E_i)$, since it is the pointwise supremum of continuous functions. 
\end{proof}

\section{Decomposability and Conditions for Finiteness}\label{sec:Decomposability}
The aim of this section is to complete the proof of Theorem \ref{thm:FRBL} by successively decomposing data which are not gaussian-extremizable into ones that are either gaussian-extremizable or degenerate.  A consequence of the arguments will be the necessary and sufficient conditions for finiteness given by  Theorem \ref{thm:necSuffCondDgFinite}.  The development closely parallels the treatment of the forward Brascamp-Lieb inequality in \cite{bennett2008brascamp}, however the modifications are significant enough that   it  warrants explicitly giving the details.  Our starting point will be to state a characterization of $D_g(\mathbf{c},\mathbf{d},\mathbf{B})$  in terms of Shannon entropies, denoted by $h$.  This characterization will be exploited to facilitate the various computations later on.  Basic properties of the Shannon entropy (subadditivity, scaling, etc.) will be taken for granted here; the unfamiliar reader can find the needed properties collected in Appendix \ref{app:Entropy}, or any standard text on information theory. 

\subsection{Entropic characterization of $D_g(\mathbf{c},\mathbf{d},\mathbf{B})$}\label{subsec:EntropyDg}
We first introduce some notation that will be needed below and again in Section \ref{subsec:AJN}.  For a collection of random vectors $(X_i)_{1\leq i\leq k}$ taking values in $(E_i)_{1\leq i\leq k}$, respectively, we denote the marginal law of each $X_i$ as $P_{X_i}$, and denote their joint law by $P_{\mathbf{X}}$.  The set of couplings of $(X_i)_{1\leq i\leq k}$ (i.e., joint laws of $(X_i)_{1\leq i\leq k}$ with $X_i$-marginal equal to $P_{X_i}$ for each $1\leq i\leq k$) is denoted by $\Pi(P_{X_1}, \dots, P_{X_k})$.  Since elements of $S^+(E_i)$ are in one-to-one correspondence with centered gaussian probability measures on $E_i$ (see Remark \ref{rmk:PiasCouplings}), this notation is consistent with the earlier definition of  $\Pi(K_1, \dots, K_k)$ for $K_i \in S^+(E_i)$, $1\leq i\leq k$.

If $Z$ is a gaussian random vector in $E$ with covariance $\Sigma \in S^{+}(E)$, we have the identity
\begin{align}
h(Z) = \frac{1}{2}\log\left( (2\pi e)^{\dim(E)} \det(\Sigma) \right).\label{gaussEntropyExpression}
\end{align}
  So, a reinterpretation of Theorem \ref{thm:equivFormsDg} and Remark \ref{rmk:Ceq2D} is the following entropic characterization of $D_g(\mathbf{c},\mathbf{d},\mathbf{B})$. A similar  characterization also holds for $D(\mathbf{c},\mathbf{d},\mathbf{B})$; see Theorem \ref{thm:entDualGeneral} and accompanying remarks in Section \ref{subsec:AJN}.
\begin{proposition}\label{prop:entropyCharacterizationDg}
If $(Z_i)_{1\leq i \leq k}$ are gaussian random vectors in $(E_i)_{1\leq i\leq k}$, respectively, then 
\begin{align}
\sum_{i=1}^kc_i  h\left( c_i^{-1} Z_i \right)  - \max_{P_{\mathbf{Z}}\in \Pi(P_{Z_1}, \dots, P_{Z_k})} \sum_{j=1}^m d_j h\left(\sum_{i=1}^k B_{ij} Z_i \right) \leq D_g(\mathbf{c},\mathbf{d},\mathbf{B}), \label{gaussianEntropyDg}
\end{align}
where the maximum is over all couplings of the $(Z_i)_{1\leq i \leq k}$.  Moreover, the constant $D_g(\mathbf{c},\mathbf{d},\mathbf{B})$ is best possible. 
\end{proposition}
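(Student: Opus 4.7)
The plan is to recognize \eqref{gaussianEntropyDg} as the entropic reformulation of condition~(2) of Theorem~\ref{thm:equivFormsDg}, specialized to the sharp constant $C = 2\,D_g(\mathbf{c},\mathbf{d},\mathbf{B})$ furnished by Remark~\ref{rmk:Ceq2D}.

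First I would reduce the maximum in \eqref{gaussianEntropyDg} to one over jointly gaussian couplings. Since the $Z_i$ are gaussian, any coupling $P_{\mathbf{Z}}$ has some joint covariance $K \in \Pi(\Cov(Z_1),\ldots,\Cov(Z_k))$; for each such $K$, the jointly gaussian coupling simultaneously maximizes every term $h\bigl(\sum_i B_{ij} Z_i\bigr)$ by the standard maximum-entropy property (each entropy is bounded above by that of the gaussian with the same covariance, and a single gaussian coupling realizes all these bounds at once). Thus the supremum in \eqref{gaussianEntropyDg} coincides with the supremum restricted to gaussian couplings.

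Next I would perform the substitution $W_i := c_i^{-1} Z_i$, which gives $h(c_i^{-1} Z_i) = h(W_i)$ directly and converts $\sum_i B_{ij} Z_i$ into $\sum_i c_i B_{ij} W_i = B_j \Lc \mathbf{W}$. Letting $\widetilde{K}_i \in S^+(E_i)$ denote the covariance of $W_i$ (every element of $S^+(E_i)$ arises this way as $Z_i$ varies) and $\widetilde{K} \in \Pi(\widetilde{K}_1,\ldots,\widetilde{K}_k)$ the covariance of a gaussian coupling $\mathbf{W}$, applying the gaussian entropy formula \eqref{gaussEntropyExpression} to both sides of \eqref{gaussianEntropyDg}, multiplying by $2$, and using \eqref{eq:consistency} to cancel the $\log(2\pi e)$ constants, rewrites \eqref{gaussianEntropyDg} as
\begin{align*}
\sum_{i=1}^k c_i \log\det \widetilde{K}_i \;-\; \max_{\widetilde{K} \in \Pi(\widetilde{K}_1,\ldots,\widetilde{K}_k)} \sum_{j=1}^m d_j \log\det(B_j \Lc \widetilde{K} \Lc B_j^*) \;\leq\; 2\,D_g(\mathbf{c},\mathbf{d},\mathbf{B}).
\end{align*}
This is precisely condition (2) of Theorem \ref{thm:equivFormsDg} with $C = 2\,D_g(\mathbf{c},\mathbf{d},\mathbf{B})$, which is its sharp constant by Remark \ref{rmk:Ceq2D}; reversing the substitution then transports sharpness back to \eqref{gaussianEntropyDg}.

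The only step of substance is the maximum-entropy reduction to gaussian couplings in the first paragraph; after that, the argument is pure bookkeeping between the determinantal and entropic forms, so I do not anticipate any serious obstacle.
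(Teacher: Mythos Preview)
Your proposal is correct and matches the paper's own argument: the paper states the proposition as ``a reinterpretation of Theorem~\ref{thm:equivFormsDg} and Remark~\ref{rmk:Ceq2D}'' via the gaussian entropy formula~\eqref{gaussEntropyExpression}, and the subsequent remark notes exactly your maximum-entropy reduction to gaussian couplings. You have simply spelled out the bookkeeping (the substitution $W_i = c_i^{-1}Z_i$ and the cancellation of the $\log(2\pi e)$ terms via~\eqref{eq:consistency}) that the paper leaves implicit.
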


\begin{remark}
Although not explicitly stated, it suffices to consider jointly gaussian couplings in \eqref{gaussianEntropyDg}, giving equivalence to \eqref{couplingObjective}.  This is a consequence of the fact that gaussians maximize entropy for a given covariance.  
\end{remark}
\begin{remark}\label{rmk:FiniteEntropies}
Since any choice of gaussian $(Z_i)_{1\leq i \leq k}$ in \eqref{gaussianEntropyDg} have finite second moments by definition, Proposition \ref{app:ExistEntropy} in Appendix \ref{app:Entropy} ensures that the entropies $h\left(\sum_{i=1}^k B_{ij} Z_i \right)$ exist and satisfy
$$
 h\left(\sum_{i=1}^k B_{ij} Z_i \right) <+\infty~~~\mbox{for each }1\leq j\leq m.
$$
Moreover, if $D_g(\mathbf{c},\mathbf{d},\mathbf{B})<+\infty$, then 
there must exist a coupling of the $(Z_i)_{1\leq i \leq k}$ (in particular, the one achieving the maximum in \eqref{gaussianEntropyDg})  such that 
\begin{align}
 h\left(\sum_{i=1}^k B_{ij} Z_i \right) >-\infty~~~\mbox{for each }1\leq j\leq m. \label{couplingFiniteEntropy}
\end{align}
In other words, under this optimal coupling, the entropy $h\left(\sum_{i=1}^k B_{ij} Z_i \right)$ exists and is finite for each $1\leq j\leq m$.
\end{remark}

\subsection{Decomposability  of $D(\mathbf{c},\mathbf{d},\mathbf{B})$ and $D_g(\mathbf{c},\mathbf{d},\mathbf{B})$ for non-simple data}

Let $T$ be a subspace of $E_0$ having product form, and define $B_{j,T}$ to be the restriction of $B_{j}$ to $T$.  Note that if $T_i:=\pi_{E_i} T$, and $B_{ij,T_i}$ is the restriction of $B_{ij}$ to $T_i$, then the product-form assumption implies
$$
B_{j,T} x = \sum_{i=1}^k {B_{ij,T_i}}\pi_{E_i}(x),~~~x\in T. 
$$
Now, let $B_{ij,T_i^{\perp}}$ denote the restriction of $(\pi_{(B_j T)^{\perp}} {B_{ij}})$ to $T_i^{\perp}$, the orthogonal complement of $T_i$ in $E_i$, and define 
the collections of linear maps
\begin{align*}
\mathbf{B}_T &:= \big\{B_{ij,T_i} : T_i \to (B_j T) \big\}_{1\leq i\leq k, 1\leq j\leq m}, \mbox{~~and} \\
\mathbf{B}_{E_0/  T} &:= \big\{   B_{ij,T_i^{\perp}}   : T^{\perp}_i   \to (B_j T)^{\perp}  \big\}_{1\leq i\leq k, 1\leq j\leq m},
\end{align*}
where $(B_j T)^{\perp} $ denotes the orthogonal complement of $B_j T$ in $E^j$, $1\leq j \leq m$. 

\begin{lemma}\label{lem:Dsubadditive}
For any product-form subspace $T\subseteq E_0$, it holds that 
$$
D(\mathbf{c},\mathbf{d},\mathbf{B}) \leq D(\mathbf{c},\mathbf{d},\mathbf{B}_T) +D(\mathbf{c},\mathbf{d},\mathbf{B}_{E_0/  T}) 
$$
and
$$
D_g(\mathbf{c},\mathbf{d},\mathbf{B}) \leq D_g(\mathbf{c},\mathbf{d},\mathbf{B}_T) +D_g(\mathbf{c},\mathbf{d},\mathbf{B}_{E_0/  T}) .
$$
\end{lemma}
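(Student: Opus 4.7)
The plan is to carry out a conditional/Fubini-style decomposition: split along the orthogonal decompositions $E_i = T_i \oplus T_i^\perp$ (with $T_i = \pi_{E_i}T$) and $E^j = B_j T \oplus (B_j T)^\perp$, apply the FRBL inequality associated to $\mathbf{B}_T$ on each ``horizontal'' slice parametrized by $x^\perp \in \prod_i T_i^\perp$, and then apply the FRBL inequality associated to $\mathbf{B}_{E_0/T}$ to the resulting integrated quantities over the orthogonal complements.

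In more detail, I would start with nonnegative measurable $(f_i,g_j)$ satisfying \eqref{FRBLhypIntro} and decompose each $x_i = x_i^T + x_i^\perp$ with $x_i^T \in T_i$ and $x_i^\perp \in T_i^\perp$. The key observation is that, for fixed $(x_i^\perp)$, the argument of each $g_j$ in \eqref{FRBLhypIntro} splits as a $B_j T$-component depending linearly (via the maps defining $\mathbf{B}_T$) on $(x_i^T)$ plus an affine shift in $B_j T$, and a $(B_j T)^\perp$-component equal to $\sum_i c_i B_{ij,T_i^\perp} x_i^\perp$. Defining the slice function $f_i^{x^\perp}(x_i^T):=f_i(x_i^T+x_i^\perp)$ on $T_i$, and $\tilde G_j^{x^\perp}$ on $B_j T$ obtained from $g_j$ by fixing its $(B_j T)^\perp$-argument and sliding off the $B_j T$-shift, the restriction of \eqref{FRBLhypIntro} to the slice becomes precisely the FRBL hypothesis for the datum $(\mathbf{c},\mathbf{d},\mathbf{B}_T)$. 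Applying the definition of $D(\mathbf{c},\mathbf{d},\mathbf{B}_T)$ on the slice and using translation invariance of Lebesgue measure on $B_j T$ to eliminate the shift, I would obtain
$$
\prod_{i=1}^k F_i^*(x_i^\perp)^{c_i} \le e^{D(\mathbf{c},\mathbf{d},\mathbf{B}_T)} \prod_{j=1}^m h_j\Bigl(\textstyle\sum_i c_i B_{ij,T_i^\perp}\, x_i^\perp\Bigr)^{d_j},
$$
where $F_i^*(x_i^\perp) := \int_{T_i} f_i(x_i^T+x_i^\perp)\,dx_i^T$ and $h_j(y_j^\perp) := \int_{B_j T} g_j(y_j^T+y_j^\perp)\,dy_j^T$. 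After absorbing the constant $e^{D(\mathbf{c},\mathbf{d},\mathbf{B}_T)}$ into a single $F_i^*$ by a scalar multiplication, this is exactly the FRBL hypothesis for $(\mathbf{c},\mathbf{d},\mathbf{B}_{E_0/T})$ applied to $(F_i^*)$ and $(h_j)$ on the orthogonal complements. A second application of FRBL, combined with Fubini (so that $\int_{T_i^\perp} F_i^* = \int_{E_i} f_i$ and $\int_{(B_j T)^\perp} h_j = \int_{E^j} g_j$), then produces the claimed bound for $D$.

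For the bound on $D_g$, I would repeat the same argument restricted to centered Gaussian inputs: the slice functions $f_i^{x^\perp}$ and $\tilde G_j^{x^\perp}$ remain Gaussian in their arguments, and the integrated marginals $F_i^*$ and $h_j$ are also Gaussian (up to scalar factors handled as above), so the two applications of FRBL invoke only the Gaussian-restricted constants $D_g(\mathbf{c},\mathbf{d},\mathbf{B}_T)$ and $D_g(\mathbf{c},\mathbf{d},\mathbf{B}_{E_0/T})$. I expect the main obstacle to be the careful orthogonal bookkeeping at the $g_j$-level and the verification that the $B_j T$-valued shift produced by $(x_i^\perp)$ is genuinely absorbed by translation invariance; once this is done, everything else reduces to two direct applications of the FRBL inequality on the respective sub-data.
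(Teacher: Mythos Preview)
Your approach is essentially the same as the paper's: slice along $T$, apply the FRBL inequality for $\mathbf{B}_T$ on each slice using translation invariance to absorb the $B_jT$-valued shift, then apply the FRBL inequality for $\mathbf{B}_{E_0/T}$ to the integrated quantities $F_i^*$ and $h_j$, and recombine via Fubini--Tonelli. The one point the paper handles that you do not mention is the measurability of the slice maps $x_i^T\mapsto f_i(x_i^T+x_i^\perp)$ and $u_j\mapsto g_j(u_j+v_j)$ (it reduces to bounded $f_i,g_j$ and modifies them on null sets so the slices are measurable for \emph{all} parameters, not just almost all); for the Gaussian case this is, as the paper also notes, a non-issue.
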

\begin{remark}
It may happen in the decomposition of $(\mathbf{c},\mathbf{d},\mathbf{B})$ into $(\mathbf{c},\mathbf{d},\mathbf{B}_T)$ and $(\mathbf{c},\mathbf{d},\mathbf{B}_{E_0/T})$ that we encounter subspaces of dimension zero.  Just as argued in \cite{bennett2008brascamp}, these subspaces can be safely disregarded in the following and subsequent computations. In particular, the entropy of a random variable on a subspace of dimension zero (a degenerate situation) is defined to be equal to zero in subsequent computations. 
\end{remark}
\begin{proof} We assume both $D_g(\mathbf{c},\mathbf{d},\mathbf{B}_T)$ and $D_g(\mathbf{c},\mathbf{d},\mathbf{B}_{E_0/  T})$ are finite, else the claim is trivial.  

Fix $M>0$.  Let $f_i : E_i \to \R^+$, $1\leq i\leq k$ and $g_j : E^j \to \R^+$, $1\leq j\leq m$ be non-negative measurable  functions, bounded from above by $M$ and satisfying
\begin{align}
\prod_{i=1}^k f_i^{c_i}(z_i) \leq \prod_{j=1}^m g_j^{d_j}\left( \sum_{i=1}^k c_i B_{ij} z_i \right)\hspace{1cm}\forall z_i\in E_i,~ 1\leq i\leq k.\label{FRBLhypZ}
\end{align}
Define $T_i :=\pi_{E_i}T$.  For $z_i \in E_i$, define $x_i:= \pi_{T_i}(z_i)$ and $y_i:= \pi_{T^{\perp}_i}(z_i)$.  In this notation, the hypothesis \eqref{FRBLhypZ} implies 
\begin{align}
\prod_{i=1}^k f_i^{c_i}(x_i + y_i) \leq \prod_{j=1}^m g_j^{d_j}\left( \sum_{i=1}^k c_i \left(B_{ij,T_i} x_i   +  \pi_{B_{j} T}  B_{ij} y_i \right)+ \sum_{i=1}^k  c_i B_{ij,T_i^{\perp}} y_i  \right) .\label{FRBLhypX}
\end{align}
By the Fubini-Tonelli theorem, the map $x_i \in T_i \longmapsto f_i(x_i + y_i)$ is measurable for almost every $y_i \in T_i^{\perp}$; define $N_i\subset T_i^{\perp}$ to be the (null) set of $y_i \in T_i^{\perp}$ for which $x_i \in T_i \longmapsto f_i(x_i + y_i)$ is not measurable.  By defining 
$$
\tilde f_i(x_i + y_i) = f(x_i + y_i) 1_{T^{\perp}_i \setminus N_i}(y_i), \hspace{1cm}x_i \in T_i, y_i \in T_i^{\perp},
$$
we have that $x_i \in T_i \longmapsto \tilde f_i(x_i + y_i)$ is measurable for all $y_i \in T_i^{\perp}$.  Moreover, since $f$ was only modified on a null set, $\int_{E_i} \tilde f_i = \int_{E_i}   f_i$.   Similarly, the map $u_j \in B_j T \longmapsto g_j(u_j + v_j)$ is measurable for almost every $v_j \in (B_jT)^{\perp}$; define $N^j \subset (B_jT)^{\perp}$ to be the (null) set of $v_j \in (B_jT)^{\perp}$ for which $u_j \in B_j T \longmapsto g_j(u_j + v_j)$ is not measurable.  Almost as before, we define 
$$
\tilde g_j(u_j + v_j) =  g_j(u_j + v_j)1_{(B_j T)^{\perp} \setminus N^j}(v_j) + M 1_{ N^j}(v_j), 
\hspace{1cm}u_j \in B_j T, v_j \in (B_j T)^{\perp},
$$
and are guaranteed that $u_j \in B_j T \longmapsto \tilde g_j(u_j + v_j)$ is measurable for all $v_j \in (B_jT)^{\perp}$, and $\int_{E^j} \tilde g_j = \int_{E^j}   g_j$.  

Since $\tilde f_i \leq f_i$ and $g_j \leq \tilde g_j$ by construction, we have that \eqref{FRBLhypX} holds with $f_i$ (resp. $g_j$) replaced by $\tilde f_i$ (resp. $\tilde g_j$).  So, by definition of $D_g(\mathbf{c},\mathbf{d},\mathbf{B}_T)$ and translation invariance of the Lebesgue integral, we consider the hypothesis \eqref{FRBLhypX} for fixed $(y_i)_{1\leq i \leq k}$ to conclude 
$$
\prod_{i=1}^k \left( \int_{T_i} \tilde f_i(x + y_i) dx  \right)^{c_i} \leq   e^{D_g(\mathbf{c},\mathbf{d},\mathbf{B}_T)} 
\prod_{j=1}^m \left( \int_{B_jT} \tilde g_j\left(u + \sum_{i=1}^k  B_{ij,T_i^{\perp}} y_i  \right) du  \right)^{d_j} 
\forall y_i\in T^{\perp}_i,~ 1\leq i\leq k.\label{FRBLhypY}
$$
By Fubini-Tonelli, the map  $y_i \in T_i^{\perp} \longmapsto \int_{T_i} \tilde f_i(u + y_i) du$ is measurable.  Similarly, $v_j \in (B_jT)^{\perp} \longmapsto \int_{B_j T} \tilde g_j(u + v_j) du$ is measurable.   Therefore, by definition of $D_g(\mathbf{c},\mathbf{d},\mathbf{B}_{E_0/  T})$, we find\begin{align*}
\prod_{i=1}^k \left( \int_{T^{\perp}_i}  \int_{T_i} \tilde f_i(x + y) dx   dy \right)^{c_i} \leq   e^{D_g(\mathbf{c},\mathbf{d},\mathbf{B}_T) +D_g(\mathbf{c},\mathbf{d},\mathbf{B}_{E_0/  T}) } 
\prod_{j=1}^m \left( \int_{(B_jT)^{\perp}}   \int_{B_jT} \tilde g_j\left(u + v\right) du  dv \right)^{d_j}.
\end{align*}
By an application of Tonelli's theorem combined with the previously noted identities $\int_{E_i} \tilde f_i = \int_{E_i}   f_i$ and $\int_{E^j} \tilde g_j = \int_{E^j}   g_j$, we conclude 
\begin{align*}
\prod_{i=1}^k \left(   \int_{E_i} f_i \right)^{c_i} \leq   e^{D_g(\mathbf{c},\mathbf{d},\mathbf{B}_T) +D_g(\mathbf{c},\mathbf{d},\mathbf{B}_{E_0/  T}) } 
\prod_{j=1}^m \left(   \int_{E^j} g_j  \right)^{d_j}
\end{align*}
for any non-negative, bounded measurable functions  $(f_i)_{1\leq i\leq k}$ and $(g_j)_{1\leq j\leq m}$ satisfying \eqref{FRBLhypZ}.  It was already noted in the proof of  Theorems \ref{thm:FRBLgaussianExtiff} and \ref{thm:FRBLunderGaussExt} that  bounded functions saturate the definition of $D(\mathbf{c},\mathbf{d},\mathbf{B})$, so the first claim is proved. 

The statement for $D_g$ follows by an identical argument, considering only centered gaussian functions. In fact, it is even easier since there are no measurability considerations to deal with. 
\end{proof}

\begin{lemma}\label{lem:decomposabilityOfD}
Let $T\subset E_0$ be a critical subspace for the datum $(\mathbf{c},\mathbf{d},\mathbf{B})$. It holds that 
$$
D_g(\mathbf{c},\mathbf{d},\mathbf{B}) = D_g(\mathbf{c},\mathbf{d},\mathbf{B}_T) +D_g(\mathbf{c},\mathbf{d},\mathbf{B}_{E_0/  T}) .
$$
\end{lemma}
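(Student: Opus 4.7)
The $\leq$ direction is Lemma \ref{lem:Dsubadditive}; what remains is the reverse inequality
$$D_g(\mathbf{c},\mathbf{d},\mathbf{B}) \geq D_g(\mathbf{c},\mathbf{d},\mathbf{B}_T) + D_g(\mathbf{c},\mathbf{d},\mathbf{B}_{E_0/T}).$$
I plan to work from the determinantal dual in Theorem \ref{thm:equivFormsDg}. Given $\epsilon > 0$, pick marginals $(K_i^T)_i$ on $(T_i)_i$ that are $\epsilon$-near-optimal for $\mathbf{B}_T$, and $(K_i^\perp)_i$ on $(T_i^\perp)_i$ that are $\epsilon$-near-optimal for $\mathbf{B}_{E_0/T}$. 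For a scaling parameter $s > 0$, define $K_i(s) := K_i^T \oplus s K_i^\perp \in S^+(E_i)$ and write $f(K) := \sum_{j=1}^m d_j \log \det(B_j \Lc K \Lc B_j^*)$. Theorem \ref{thm:equivFormsDg} gives $2 D_g(\mathbf{c},\mathbf{d},\mathbf{B}) \geq \sum_i c_i \log \det K_i(s) - \max_{K \in \Pi(K_1(s),\ldots,K_k(s))} f(K)$, so the goal is to upper-bound this maximum, then let $s \to 0$ and $\epsilon \to 0$.

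The technical heart of the plan is the analysis of $\max_K f(K)$ as $s\to 0$. Any $K \in \Pi(K_1(s),\ldots,K_k(s))$ decomposes, relative to $E_0 = T \oplus T^\perp$, as $K = K_0 + \bigl(\begin{smallmatrix}0 & C \\ C^* & 0\end{smallmatrix}\bigr)$ with $K_0 = K_T \oplus s K_\perp$ block-diagonal ($K_T \in \Pi(K_1^T,\ldots,K_k^T)$ and $K_\perp \in \Pi(K_1^\perp,\ldots,K_k^\perp)$), and positive-semidefiniteness of $K$ forces $|C|_{\mathrm{op}}^2 = O(s)$. Because $B_j$ maps $T$ into $B_jT$, the map $B_j \Lc$ is upper-triangular in the block decomposition $T\oplus T^\perp \to B_jT \oplus (B_jT)^\perp$, and a direct computation shows
$$M_j(K) := B_j \Lc K \Lc B_j^* = \begin{pmatrix} A_j(K_T) + O(\sqrt{s}) & O(\sqrt{s}) \\ O(\sqrt{s}) & s\,\Psi_j(K_\perp) \end{pmatrix},$$
where $A_j(K_T)$ and $\Psi_j(K_\perp)$ are the block-diagonal image covariances appearing in the entropic characterization of $D_g(\mathbf{B}_T)$ and $D_g(\mathbf{B}_{E_0/T})$ respectively. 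The Schur complement of $M_j(K)$ relative to its lower-right block, combined with positive-semidefiniteness of the Schur complement (which follows from $K \geq 0$) and the elementary inequality $\det(I+X) \leq \exp\Tr(X)$ valid for $I+X \geq 0$, will yield the uniform determinantal bound $\det M_j(K) \leq \det M_j(K_0)\,(1+O(\sqrt{s}))$. Summing over $j$ with weights $d_j$ gives $\max_K f(K) \leq \max_{K_0} f(K_0) + O(\sqrt{s})$, and the block-diagonal maximum factors into independent optimizations over $K_T$ and $K_\perp$.

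A second Schur-complement calculation on $M_j(K_0)$ produces
$$\max_{K_0}\! f(K_0) = \max_{K_T}\!\sum_j d_j \log \det A_j(K_T) + \max_{K_\perp}\!\sum_j d_j \log \det \Psi_j(K_\perp) + \Bigl(\sum_j d_j \dim (B_j T)^\perp\Bigr)\!\log s + O(s).$$
Combined with $\sum_i c_i \log \det K_i(s) = \sum_i c_i \log \det K_i^T + \sum_i c_i \log \det K_i^\perp + \bigl(\sum_i c_i \dim T_i^\perp\bigr) \log s$, the bracketed quantity splits into the two sub-brackets plus $\bigl[\sum_i c_i \dim T_i^\perp - \sum_j d_j \dim(B_jT)^\perp\bigr]\log s + O(\sqrt s)$. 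The crucial point is that this singular coefficient vanishes identically: the criticality of $T$, namely $\sum_i c_i \dim \pi_{E_i}T = \sum_j d_j \dim(B_jT)$, together with the scaling condition \eqref{eq:consistency}, implies $\sum_i c_i \dim T_i^\perp = \sum_j d_j \dim(B_jT)^\perp$. By the near-optimality of $K_i^T$ and $K_i^\perp$, the two sub-brackets are at least $2D_g(\mathbf{B}_T)-\epsilon$ and $2D_g(\mathbf{B}_{E_0/T})-\epsilon$, respectively; letting $s \to 0$ and then $\epsilon\to 0$ concludes the argument.

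The step I expect to be the main obstacle is the uniform Schur-complement bound $\det M_j(K) \leq \det M_j(K_0)(1 + O(\sqrt s))$: as $s \to 0$, the lower-right block of $M_j(K_0)^{-1}$ blows up like $1/s$, so any naive Taylor expansion of $\log\det$ in $C$ diverges. The resolution is to bypass perturbation theory by working with the exact Schur complement directly: the subtracted term $\mathrm{TR}\cdot(s\Psi_j(K_\perp))^{-1}\cdot\mathrm{BL}$ is of order $O(1)$ but subtracts a positive-semidefinite contribution from $A_j(K_T)$, and the overall Schur complement remains positive-semidefinite because $K \geq 0$; this is precisely what permits the inequality $\det(I+X)\leq\exp\Tr(X)$ to be applied non-perturbatively and yield the clean bound.
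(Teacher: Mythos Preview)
Your overall strategy---rescale one subspace component by $s$, use criticality of $T$ together with \eqref{eq:consistency} to cancel the divergent $\log s$ coefficient, and pass to the limit---is exactly the paper's. The paper works in the entropic dual (Proposition \ref{prop:entropyCharacterizationDg}) while you work in the determinantal dual (Theorem \ref{thm:equivFormsDg}); for gaussians these are equivalent, so that difference is cosmetic.

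There is, however, a gap in your claimed uniform bound $\det M_j(K)\le \det M_j(K_0)\,(1+O(\sqrt s))$. Tracing through your $\det(I+X)\le e^{\Tr X}$ step, what one actually gets is
\[
\det S_j(K)\;\le\;\det S_j(K_0)\,\exp\!\bigl(O(\sqrt s)\,\|S_j(K_0)^{-1}\|\bigr),
\]
which is only uniform if $S_j(K_0)=A_j(K_T)+O(s)$ stays bounded below over the whole coupling set $\Pi(K_1^T,\ldots,K_k^T)$. But couplings $K_T$ are only positive-\emph{semi}definite, and when $K_T$ is rank-deficient the image $A_j(K_T)=P K_T P^*$ can be singular; in that case $\det M_j(K_0)=0$ while $\det M_j(K)>0$ is perfectly possible, so the pointwise inequality fails outright. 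The positive-semidefiniteness of the subtracted Schur term, which you correctly identify, controls the $O(1)$ piece but does nothing about the $O(\sqrt s)$ remainder $\Delta$ once $S_j(K_0)^{-1}$ is unbounded.

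The paper sidesteps this by dropping your step 1 entirely. It applies subadditivity of entropy---equivalently, Fischer's inequality $\det M_j(K)\le \det(\mathrm{TL})\cdot\det(\mathrm{BR})$---directly for every $K$, which already separates the $B_jT$ and $(B_jT)^\perp$ contributions without ever comparing to $K_0$. The bottom-right block is exactly $s\,\Psi_j(K_\perp)$, so that factor is clean. For the top-left block $A_j(K_T)+O(\sqrt s)$ no uniform perturbative bound is asserted; instead one passes to a subsequence of (near-)optimal couplings, uses compactness of the coupling sets, and invokes upper-semicontinuity of $\log\det$ on positive-semidefinite matrices (stated in the paper as weak upper-semicontinuity of entropy, Proposition \ref{app:WeakUSC}) to take the limit $s\to 0$. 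This gives the required inequality without any invertibility hypothesis, and makes your separate step 2 unnecessary as well.
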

\begin{remark}
The same conclusion also holds for $D(\mathbf{c},\mathbf{d},\mathbf{B})$, though we do not need to prove it separately here.  It will follow from subsequent results, and is stated later as Corollary \ref{cor:decomposabilityOfDD}. 
\end{remark}
\begin{proof} We  take advantage of the entropic characterization of $D_g(\mathbf{c},\mathbf{d},\mathbf{B})$ in Proposition \ref{prop:entropyCharacterizationDg}  to give a simple proof, though it is also possible to appeal to the functional formulation.  We assume  $D_g(\mathbf{c},\mathbf{d},\mathbf{B})<+\infty$, since otherwise the corresponding claims follow from Lemma \ref{lem:Dsubadditive} and the fact that $D_g>-\infty$ for any datum.  

Recall that critical subspaces are of product form by definition.   Define $T_i := \pi_{E_i}T$, and let $X_i,Y_i$ be independent, gaussian random vectors in $T_i,T^{\perp}_i$, respectively (each having finite entropies by definition).  Define $Z_i = \epsilon^{-1} X_i+ Y_i$, which is a gaussian random vector in $E_i$, and note that
\begin{align*}
h(c_i^{-1} Z_i) = h(\epsilon^{-1} c_i^{-1}  X_i,c_i^{-1} Y_i) &= h(\epsilon^{-1} c_i^{-1} X_i)+h(c_i^{-1} Y_i) \\
&= \dim(T_i) \log(\epsilon^{-1}) + h(c_i^{-1} X_i)+h(c_i^{-1} Y_i), ~~1\leq i\leq k.
\end{align*}
Now, for any coupling of the $(Z_i)_{1\leq i\leq k}$ satisfying \eqref{couplingFiniteEntropy}, it follows  by subadditivity of entropy (Proposition \ref{app:Subadditive}) and the scaling property (Proposition \ref{app:scaling}) that
\begin{align}
&h\left(\sum_{i=1}^k   B_{ij}  Z_i  \right) \\
&= h\left(\epsilon^{-1} \sum_{i=1}^k   B_{ij}  X_i  +    \pi_{B_{j}T }  \sum_{i=1}^k  B_{ij}   Y_i ~  ,   ~\pi_{(B_{j}T)^{\perp}} \sum_{i=1}^k  B_{ij}   Y_i    \right) \\
&\leq h\left(\epsilon^{-1} \sum_{i=1}^k   B_{ij}  X_i  +    \pi_{B_{j}T } \sum_{i=1}^k  B_{ij}   Y_i    \right) + h\left(  \pi_{(B_{j}T)^{\perp}} \sum_{i=1}^k  B_{ij}   Y_i \right)\\
&= \dim(B_j T) \log(\epsilon^{-1})  + h\left(  \sum_{i=1}^k   B_{ij}  X_i  +    \epsilon ~\pi_{B_{j}T } \sum_{i=1}^k  B_{ij}   Y_i    \right) + h\left(  \pi_{(B_{j}T)^{\perp}} \sum_{i=1}^k  B_{ij}   Y_i \right).
\end{align}
Recalling Remark \ref{rmk:FiniteEntropies}, we note that all terms are finite.  

So, for any $\epsilon>0$, we use the assumption that $T$ was critical  to cancel the $\log(\epsilon^{-1})$ terms to find
\begin{align*}
D(\mathbf{c},\mathbf{d},\mathbf{B})
& \geq  
  \sum_{i=1}^k c_i h(c_i^{-1}X_i) -  \sum_{j=1}^m d_j  h\left(  \sum_{i=1}^k   B_{ij}  X_i  +    \epsilon ~\pi_{B_{j}T } \sum_{i=1}^k  B_{ij}   Y_i    \right)  \\
  &~~~~+  \sum_{i=1}^k c_i h( c_i^{-1} Y_i) -   \sum_{j=1}^m d_j   h\left(  \pi_{(B_{j}T)^{\perp}} \sum_{i=1}^k  B_{ij}   Y_i \right) 
\end{align*}
for some coupling of the $(Z_i)_{1\leq i\leq k} \equiv (X_i,Y_i)_{1\leq i\leq k}$.
Next, since gaussians have bounded second moments by definition,  weak upper semicontinuity of  entropy (Proposition \ref{app:WeakUSC})  implies
$$
 \limsup_{\epsilon\to 0}  h\left(  \sum_{i=1}^k   B_{ij}  X_i  +    \epsilon ~\pi_{B_{j}T } \sum_{i=1}^k  B_{ij}   Y_i    \right)  \leq  h\left(  \sum_{i=1}^k   B_{ij}  X_i   \right) , ~~~~1\leq i\leq k.
$$
In combination with the previous estimate, we have
\begin{align*}
D_g(\mathbf{c},\mathbf{d},\mathbf{B}) & \geq  
  \sum_{i=1}^k c_i h(c_i^{-1}X_i) -  \max_{P_{\mathbf{X}}\in \Pi(P_{X_1},\dots, P_{X_k})}  \sum_{j=1}^m d_j  h\left(  \sum_{i=1}^k   B_{ij,T_i}  X_i      \right) \\
  & ~~~~ +  \sum_{i=1}^k c_i h( c_i^{-1} Y_i) - \max_{P_{\mathbf{Y}}\in \Pi(P_{Y_1},\dots, P_{Y_k})} \sum_{j=1}^m d_j    h\left(  \sum_{i=1}^k   B_{ij,T_i^{\perp}}   Y_i \right) . 
\end{align*}
Since we chose  $(X_i)_{1\leq i\leq k}$ and $(Y_i)_{1\leq i\leq k}$ to be arbitrary gaussians on their respective subspaces,   it follows that
$$
D_g(\mathbf{c},\mathbf{d},\mathbf{B}) \geq D_g(\mathbf{c},\mathbf{d},\mathbf{B}_T) +D_g(\mathbf{c},\mathbf{d},\mathbf{B}_{E_0/  T}) .
$$
Comparing to  Lemma \ref{lem:Dsubadditive}, the claim is proved. 
\end{proof}

\subsection{Necessary conditions for finiteness}

\begin{proposition}\label{prop:necessaryCondtions}
If $D(\mathbf{c},\mathbf{d},\mathbf{B})<+\infty$ or $D_g(\mathbf{c},\mathbf{d},\mathbf{B})<+\infty$, then we must have \eqref{CONDITIONscaling} and 
\begin{align}
\sum_{i=1}^k c_i \dim (\pi_{E_i} T) \leq \sum_{j=1}^m d_j \dim (B_j T) ~~~\mbox{for all product-form subspaces $T\subseteq E_0$.} \label{subspaceNecessaryCondn}
\end{align}
In particular, each $B_j$ must be surjective. 
\end{proposition}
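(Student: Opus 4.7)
The plan is to prove both necessary conditions under the weaker hypothesis $D_g(\mathbf{c},\mathbf{d},\mathbf{B})<+\infty$, since the relation $D_g(\mathbf{c},\mathbf{d},\mathbf{B})\le D(\mathbf{c},\mathbf{d},\mathbf{B})$ noted after Definition \ref{def:DcdB} handles the $D<+\infty$ case automatically. For \eqref{CONDITIONscaling}, I would substitute $Z_i\mapsto \lambda Z_i$ in the entropic characterization of Proposition \ref{prop:entropyCharacterizationDg}; the left-hand side of \eqref{gaussianEntropyDg} then picks up a factor $\bigl(\sum_i c_i \dim E_i - \sum_j d_j \dim E^j\bigr)\log\lambda$ that diverges as $\lambda\to 0$ or $\lambda\to\infty$ unless equality holds.

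For \eqref{subspaceNecessaryCondn} I would argue by contradiction: suppose a product-form subspace $T\subseteq E_0$ satisfies $\sum_i c_i \dim(\pi_{E_i}T) > \sum_j d_j \dim(B_jT)$. Setting $T_i := \pi_{E_i}T$, I would take $Z_i = \epsilon^{-1}X_i + Y_i$ with $X_i,Y_i$ independent centered gaussians on $T_i, T_i^\perp$ respectively. The scaling property of entropy gives
\[
\sum_{i=1}^k c_i h(c_i^{-1}Z_i) = \Bigl(\sum_{i=1}^k c_i \dim T_i\Bigr)\log\epsilon^{-1} + O(1),
\]
uniformly in $\epsilon\in(0,1]$. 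For any coupling, split $\sum_i B_{ij}Z_i$ along $E^j = B_jT \oplus (B_jT)^\perp$; since each $B_{ij}X_i$ lies in $B_jT$, the $\epsilon^{-1}$ scaling affects only the $B_jT$ component, which lives in a $\dim(B_jT)$-dimensional subspace. Subadditivity of entropy together with the gaussian upper bound on entropy under a covariance constraint (the covariances of the relevant projections being uniformly bounded over couplings by a Cauchy--Schwarz estimate depending only on the marginals) yields
\[
\sum_{j=1}^m d_j h\Bigl(\sum_{i=1}^k B_{ij}Z_i\Bigr) \leq \Bigl(\sum_{j=1}^m d_j \dim(B_jT)\Bigr)\log\epsilon^{-1} + O(1),
\]
uniformly over couplings. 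Taking the maximum over couplings and substituting into Proposition \ref{prop:entropyCharacterizationDg} forces
\[
D_g(\mathbf{c},\mathbf{d},\mathbf{B}) \geq \Bigl(\sum_{i=1}^k c_i \dim T_i - \sum_{j=1}^m d_j \dim(B_jT)\Bigr)\log\epsilon^{-1} + O(1),
\]
which diverges as $\epsilon\to 0$, a contradiction.

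Finally, applying \eqref{subspaceNecessaryCondn} with the (product-form) subspace $T = E_0$ and combining with \eqref{CONDITIONscaling} gives the chain $\sum_j d_j\dim E^j = \sum_i c_i\dim E_i \leq \sum_j d_j\dim(B_j E_0) \leq \sum_j d_j\dim E^j$, which collapses to $\dim(B_j E_0) = \dim E^j$ for each $j$, i.e., surjectivity of every $B_j$.

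The main obstacle I anticipate is the bookkeeping needed to verify that the $O(1)$ remainders above are genuinely uniform in both $\epsilon$ and the coupling. This mirrors the situation in Lemma \ref{lem:decomposabilityOfD}, where weak upper semicontinuity of entropy supplied the control; here one needs the slightly cruder observation that second moments of coupled sums are uniformly bounded in terms of the marginals, so that the gaussian maximum-entropy inequality applied to the two decomposed components supplies the required uniform upper bounds.
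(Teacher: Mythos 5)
Your proposal is correct and follows essentially the same route as the paper: reduce to $D_g$ via $D_g\le D$, get \eqref{CONDITIONscaling} by scaling in Proposition \ref{prop:entropyCharacterizationDg}, and get \eqref{subspaceNecessaryCondn} by the anisotropic dilation $Z_i=\epsilon^{-1}X_i+Y_i$ with the subadditivity/gaussian-maximum-entropy bounds — which is exactly the computation in the proof of Lemma \ref{lem:decomposabilityOfD} that the paper invokes, only without cancelling the $\log(\epsilon^{-1})$ terms. The uniformity of the $O(1)$ remainders over couplings is handled just as you describe, via second-moment bounds determined by the marginals.
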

\begin{proof}
Since $D_g(\mathbf{c},\mathbf{d},\mathbf{B}) \leq D(\mathbf{c},\mathbf{d},\mathbf{B})$ by definition, it suffices to establish necessary conditions for  $D_g(\mathbf{c},\mathbf{d},\mathbf{B})$ to be finite. 
The condition \eqref{CONDITIONscaling} can be easily seen using the scaling property of entropy  (Proposition \ref{app:scaling}) by multiplying all random variables in \eqref{gaussianEntropyDg} by a common scalar factor.  

The necessity of \eqref{subspaceNecessaryCondn} follows immediately from the proof of Lemma \ref{lem:decomposabilityOfD}, but without cancelling the $\sum_{i=1}^k c_i \dim(T_i) \log(\epsilon^{-1})$ and $\sum_{j=1}^m d_j \dim(B_j T) \log(\epsilon^{-1})$ terms.  These terms  cancelled previously under the assumption that $T$ was critical, but this will not be the case if we assume $T$ is such that 
$$
 \sum_{i=1}^k c_i \dim (\pi_{E_i} T) > \sum_{j=1}^m d_j \dim (B_j T) ,
$$
leading to an arbitrarily large lower bound on  $D_g(\mathbf{c},\mathbf{d},\mathbf{B})$ as $\epsilon$ vanishes.

To see that each $B_j$ must be surjective, we take $T=E_0$ and compare \eqref{eq:consistency} to \eqref{subspaceNecessaryCondn}.  
\end{proof}

\subsection{Sufficient conditions for finiteness and gaussian-extremizability}
The goal of this section is to establish the sufficiency of the conditions in Theorem \ref{thm:necSuffCondDgFinite} for finiteness and gaussian-extremizability.   We start with a technical lemma, which is the counterpart of \cite[Lemma 5.1]{bennett2008brascamp} for our setting.  

\begin{lemma}\label{lem:FindBasis}
Define $N:= \dim(E_0)$ and let $(\mathbf{c},\mathbf{d},\mathbf{B})$ be a datum such that \eqref{CONDITIONscaling} holds and 
\begin{align}
\sum_{j=1}^m d_j \dim ( B_j T ) \geq \sum_{i=1}^k c_i \dim(\pi_{E_i} T)  ~~\mbox{for all product-form subspaces $T\subseteq E_0$.}\label{dimCondn}
\end{align}
In particular, this implies each $B_j$ is surjective.   Then, there is a real number $c>0$ such that, for every orthonormal basis $(e_n)_{1\leq n\leq N}$ of $E_0$ with the property that each $e_n \in E_i$ for some $1\leq i\leq k$, there exists a set $I_j \subseteq \{1,\dots, N\}$ for each $1\leq j\leq m$ with $|I_j|=\dim(E^j)$ such that
\begin{align}
\sum_{j=1}^m d_j |I_j \cap \{  n+1, \dots, N \}| \geq \sum_{i=1}^k c_i |S_i \cap \{n+1,\dots, N \}| ~~~\mbox{for all~}0\leq n\leq N, \label{techEstimate}
\end{align}
where $S_i : = \{ n : e_{n} \in E_i\}$, $1\leq i\leq k$ 
and
\begin{align}
\left\| \bigwedge_{n \in I_j }    B_j  e_n \right\|_{E_j}   \geq c~~~\mbox{for all~}1\leq j\leq m. \label{wedgeEstimate}
\end{align}
Moreover, if there are no critical subspaces, then there is a constant $\delta>0$ depending only on the datum $(\mathbf{c},\mathbf{d},\mathbf{B})$ such that  
\begin{align}
\sum_{j=1}^m d_j |I_j \cap \{  n+1, \dots, N \}| \geq \sum_{i=1}^k c_i |S_i \cap \{n+1,\dots, N \}| +\delta ~~~\mbox{for all~}0< n< N. \label{techEstimate2}
\end{align}
\end{lemma}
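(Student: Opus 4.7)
The plan is to combine a greedy basis-selection step with a compactness argument. First, taking $T = E_0$ in \eqref{dimCondn} and invoking \eqref{CONDITIONscaling} forces $\dim(B_j E_0) = \dim(E^j)$ for each $j$, establishing the surjectivity claim. Next, for a fixed adapted orthonormal basis $(e_n)_{1 \leq n \leq N}$, set $T_n := \operatorname{span}\{e_l : l > n\}$. Because each $e_l$ lies in some $E_i$, the subspace $T_n$ is product-form with $\dim(\pi_{E_i} T_n) = |S_i \cap \{n+1,\ldots,N\}|$. I would construct each $I_j$ greedily from the top: process $n = N, N-1, \ldots, 1$ in order, adding $n$ to $I_j$ iff $B_j e_n \notin \operatorname{span}\{B_j e_l : l \in I_j, l > n\}$. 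An immediate induction shows $\operatorname{span}\{B_j e_l : l \in I_j, l > n\} = B_j T_n$, so $|I_j \cap \{n+1,\ldots,N\}| = \dim(B_j T_n)$, and surjectivity gives $|I_j| = \dim(E^j)$. Applying \eqref{dimCondn} to $T_n$ then yields \eqref{techEstimate} directly.

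For the wedge estimate \eqref{wedgeEstimate}, I would decompose the space $\mathcal{B}$ of adapted orthonormal bases as a finite disjoint union $\bigsqcup_\sigma \mathcal{B}_\sigma$ indexed by maps $\sigma : \{1,\ldots,N\} \to \{1,\ldots,k\}$ with $|\sigma^{-1}(i)| = \dim E_i$. On each piece the sets $S_i = \sigma^{-1}(i)$ are constant, so the collection of tuples $(I_j)$ satisfying $|I_j| = \dim E^j$ and \eqref{techEstimate} is a fixed finite set, which I will call the admissible tuples for $\sigma$. Parameterizing $\mathcal{B}_\sigma$ by elements of $\prod_i O(E_i)$ exhibits it as compact, and on it the function
\[
\phi_\sigma(e_\cdot) := \max_{(I_j)\text{ admissible for }\sigma}\ \min_{1 \leq j \leq m} \Big\| \bigwedge_{n \in I_j} B_j e_n \Big\|_{E_j}
\]
is a maximum of finitely many continuous functions, hence continuous, and is pointwise positive on $\mathcal{B}_\sigma$ by the greedy construction. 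Compactness yields $\inf_{\mathcal{B}_\sigma} \phi_\sigma > 0$, and since the number of $\sigma$ is finite, $c := \min_\sigma \inf_{\mathcal{B}_\sigma} \phi_\sigma > 0$ is the required universal constant.

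For the strict inequality \eqref{techEstimate2} under the non-criticality assumption, I would fix a ``shape'' $(n_1,\ldots,n_k)$ with $0 \leq n_i \leq \dim E_i$ and $(n_1,\ldots,n_k) \notin \{(0,\ldots,0),\,(\dim E_1,\ldots,\dim E_k)\}$. The set of product-form subspaces $T$ with $\dim \pi_{E_i} T = n_i$ is nonempty, and $\sum_j d_j \dim(B_j T) - \sum_i c_i n_i$ is integer-valued on it, so it takes only finitely many values; each is strictly positive by non-criticality, so a positive minimum $\delta_{(n_i)}$ exists. Taking $\delta := \min_{(n_i)} \delta_{(n_i)} > 0$ over the finitely many shapes, and applying it to the nontrivial proper subspaces $T_n$ for $0 < n < N$ together with the identity $|I_j \cap \{n+1,\ldots,N\}| = \dim(B_j T_n)$ from the greedy step, yields \eqref{techEstimate2}. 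The main obstacle I foresee is the setup of the compactness step: one must ensure that admissibility is really constant on each $\mathcal{B}_\sigma$ so that $\phi_\sigma$ is a maximum of finitely many continuous functions, and verify that the greedy tuple is always admissible so that $\phi_\sigma$ is pointwise positive. Once these structural points are settled, the remainder is a routine application of the extreme value theorem to positive continuous functions on compact sets.
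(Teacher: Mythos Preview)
Your argument is correct and is essentially the paper's own proof: the same backward-greedy selection of $I_j$, the same application of \eqref{dimCondn} to the product-form tail subspaces $T_n$, the same compactness reduction from the wedge lower bound to mere linear independence, and the same ``finitely many values'' argument for the gap $\delta$. One small slip: the quantity $\sum_j d_j \dim(B_j T) - \sum_i c_i n_i$ is not integer-valued in general (the $c_i,d_j$ are arbitrary positive reals), but your conclusion that it takes only finitely many values still holds because each $\dim(B_j T)$ is an integer in $\{0,\dots,\dim E^j\}$; this is all you need.
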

\begin{proof}
Since the space of all orthonormal bases is compact, and the number of possible $I_j$ is finite, it follows by continuity and compactness that \eqref{wedgeEstimate} may be replaced by the weaker assumption that $ \left(B_j e_n \right)_{n \in I_j}$ are linearly independent in $E_j$ for each $1\leq j\leq m$.

Now, we construct $I_j$ by a backwards greedy algorithm.  Specifically, we set $I_j$ equal to those indices $n$ for which $B_j e_n$ is not in the linear span of $\{B_j e_{n'}   ; n < n' \leq \dim(E_0) \}$. Since $B_j$ is surjective, we will have $|I_j|=\dim(E^j)$.  To prove \eqref{techEstimate}, we first fix $n$ satisfying $0 < n < N$, and apply \eqref{dimCondn} with $T$ equal to the span of $\{e_{n+1}, \dots, e_{\dim(E_0)}\}$, which is of product form by the assumption that each  $e_n \in \bigcup_{i=1}^k E_i$.  Specifically, due to construction of $I_j$ we have
$$
\dim ( B_j T) = |I_j \cap \{ n+1, \dots, N\}|.
$$
On the other hand, 
$$
\dim( \pi_{E_i} T) =   |S_i \cap \{n+1, \dots, N\}|,
$$
establishing \eqref{techEstimate} when $0< n< N$.  The case of $n=N$ is trivial, and the case of $n=0$ follows from  equality in \eqref{dimCondn} for $T=E_0$ since $|S_i\cap\{1,\dots, N\} = |S_i| = \dim(E_i)$, and $|I_j\cap\{1,\dots, N\} = |I_j| = \dim(E^j)$.  

Now, if there are no critical subspaces, then there is $\delta>0$ depending only on the datum $(\mathbf{c},\mathbf{d},\mathbf{B})$ such that \eqref{dimCondn} can be refined to 
\begin{align}
\sum_{j=1}^m d_j \dim ( B_j T ) \geq \sum_{i=1}^k c_i \dim(\pi_{E_i} T) + \delta
\end{align}
for all non-zero proper subspaces $T\subset E_0$.  Indeed, this easily follows since the left and right sides of \eqref{dimCondn} only take finitely many values.  Incorporating this into the previous analysis gives \eqref{techEstimate2}.
\end{proof}

\begin{proposition}[Sufficient conditions for finiteness and gaussian-extremizability]\label{prop:sufficientCondtions}
If the datum $(\mathbf{c},\mathbf{d},\mathbf{B})$  is such that \eqref{CONDITIONscaling} and \eqref{CONDITIONdimension} hold, then $D_g(\mathbf{c},\mathbf{d},\mathbf{B})$ is finite.  If it further holds that $(\mathbf{c},\mathbf{d},\mathbf{B})$ is simple, then $(\mathbf{c},\mathbf{d},\mathbf{B})$ is gaussian-extremizable. 
\end{proposition}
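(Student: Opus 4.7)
The plan is to work with the dual characterization of $D_g(\mathbf{c},\mathbf{d},\mathbf{B})$ from Theorem~\ref{thm:equivFormsDg} and Remark~\ref{rmk:Ceq2D}, namely that $2D_g(\mathbf{c},\mathbf{d},\mathbf{B})$ equals the supremum over $(K_i) \in \prod_i S^+(E_i)$ of the functional
\[ F(K_1,\ldots,K_k) := \sum_i c_i \log\det K_i \;-\; \max_{K \in \Pi(K_1,\ldots,K_k)} \sum_j d_j \log\det(B_j \Lc K \Lc B_j^*), \]
and to bound this quantity uniformly from above. The key step is to insert the independent coupling $K = \bigoplus_i K_i \in \Pi(K_1,\ldots,K_k)$ into the inner maximum, yielding an upper estimate on $F$. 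To extract information from $\det(B_j \Lc (\oplus_i K_i) \Lc B_j^*)$, I simultaneously diagonalize the $K_i$ in an orthonormal basis $(e_n)_{n=1}^N$ of $E_0$ adapted to the direct sum (so $e_n \in E_{i_n}$ with $K_{i_n} e_n = \lambda_n e_n$, $\lambda_n > 0$), and invoke Lemma~\ref{lem:FindBasis} to produce index sets $I_j \subseteq \{1,\ldots,N\}$ satisfying the partial-sum inequality \eqref{techEstimate} and the uniform non-degeneracy bound \eqref{wedgeEstimate}. A Cauchy--Binet expansion then lower-bounds
\[ \det(B_j \Lc (\oplus_i K_i) \Lc B_j^*) \;\geq\; c^2 \prod_{n \in I_j} c_{i_n}^2 \lambda_n \]
by retaining only the term indexed by $I_j$.

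Substituting into the upper estimate on $F$ and regrouping, the $(K_i)$-dependence collapses to $\sum_n \alpha_n \log \lambda_n + C_0$, where $\alpha_n := c_{i_n} - \sum_j d_j \mathbf{1}_{\{n \in I_j\}}$ and $C_0$ is a constant depending only on $(\mathbf{c},\mathbf{d},\mathbf{B})$ (its dependence on the particular choice of basis and $I_j$ is crudely bounded by $2|\log c| \sum_j d_j + 2 \sum_j d_j \dim(E^j) \max_i |\log c_i|$). Now re-order the basis so that $\lambda_1 \leq \cdots \leq \lambda_N$, and define the reverse partial sums $\beta_n := \sum_{m \geq n} \alpha_m$. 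The scaling condition \eqref{CONDITIONscaling} forces $\beta_1 = 0$, while \eqref{techEstimate} translates directly into $\beta_n \leq 0$ for $n \geq 2$. Abel summation then yields
\[ \sum_{n=1}^N \alpha_n \log \lambda_n \;=\; \sum_{n=2}^N \beta_n (\log\lambda_n - \log\lambda_{n-1}) \;\leq\; 0, \]
so $F(K_1,\ldots,K_k) \leq C_0$ uniformly in $(K_i)$, establishing $D_g(\mathbf{c},\mathbf{d},\mathbf{B}) < +\infty$.

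For gaussian-extremizability under simplicity, the strengthening \eqref{techEstimate2} from Lemma~\ref{lem:FindBasis} improves the bound to $\beta_n \leq -\delta$ for $2 \leq n \leq N$, upgrading the Abel sum to $\sum_n \alpha_n \log\lambda_n \leq -\delta (\log\lambda_N - \log\lambda_1)$; hence $F(K_1,\ldots,K_k) \leq C_0 - \delta \log(\lambda_N/\lambda_1)$. Consequently, for any maximizing sequence $(K_i^{(n)})_{n\geq 1}$, the eigenvalue spread $\lambda_N^{(n)}/\lambda_1^{(n)}$ of $\bigoplus_i K_i^{(n)}$ must remain bounded. Using that $F$ is invariant under the common rescaling $K_i \mapsto s K_i$ (a consequence of \eqref{CONDITIONscaling}), I normalize so that all eigenvalues of $\bigoplus_i K_i^{(n)}$ lie in a fixed compact subinterval of $(0,+\infty)$; this confines $(K_i^{(n)})$ to a compact subset of $\prod_i S^+(E_i)$. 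Passing to a convergent subsequence and invoking the upper-semicontinuity of $F$ from Lemma~\ref{lem:semicontinuityOfgaussian}, the limit attains the supremum, giving gaussian-extremizability. The principal technical obstacle is the bookkeeping that identifies the partial sums $\beta_n$ with the ``dimension-condition defects'' of Lemma~\ref{lem:FindBasis} and verifies that $C_0$ admits a bound uniform in $(K_i)$; once that bridge is in place, the entire argument hinges on a single Cauchy--Binet expansion followed by a single Abel summation.
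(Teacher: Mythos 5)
Your proposal is correct and follows essentially the same route as the paper: the paper runs the identical argument (independent coupling, eigenbasis adapted to the direct sum, Lemma~\ref{lem:FindBasis}, a telescoping/Abel summation, then scaling-normalization, compactness and Lemma~\ref{lem:semicontinuityOfgaussian}) but phrased in terms of Shannon entropies of gaussian vectors rather than log-determinants, a translation the paper itself notes is immaterial. The one small point to tidy is that Lemma~\ref{lem:FindBasis} must be invoked for the basis \emph{already} ordered by increasing eigenvalue, since the sets $I_j$ and the estimate \eqref{techEstimate} are tied to that ordering; reordering after the fact, as your write-up suggests, would break the correspondence, but the fix is immediate.
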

\begin{proof}The argument  follows the  strategy of proof for \cite[Proposition 5.2]{bennett2008brascamp}, but is recast in terms of entropies which we find more convenient in the present setting\footnote{Note that we work exclusively with gaussian random vectors here, so all computations can be stated in terms of determinants, using the identity \eqref{gaussEntropyExpression}.}. 
Define $N:= \dim(E_0)$ and consider gaussian random vectors $Z_i$ in $E_i$, $1\leq i\leq k$. It is trivially true that 
\begin{align}
\max_{P_{\mathbf{Z}}\in \Pi(P_{Z_1},\dots, P_{Z_k})} \sum_{j=1}^m d_j h\left(\sum_{i=1}^k c_i B_{ij} Z_i\right) \geq \sum_{j=1}^m d_j h\left(\sum_{i=1}^k c_i B_{ij} Z'_i\right), \label{eq:boundMaxEntropyByIndep}
\end{align}
where $Z'_i=Z_i$ in distribution for each $1\leq i\leq k$, and $Z'_1, \dots, Z'_k$ are independent. Without loss of generality, we may write
$$
Z'_i = \sum_{n\in S_i} W_n e_n,~~~1\leq i\leq k
$$
where $(e_n)_{n\in S_i} \subset E_i\subset E_0$ is an orthonormal basis for $E_i$, $1\leq i\leq k$, $(S_i)_{1\leq i\leq k}$ is a partition of $\{1,\dots, N\}$, and   $(W_n)_{1\leq n \leq N}$ is a collection of independent one-dimensional gaussian random variables.  We may further assume that the indices are chosen to satisfy $h(W_1)  \leq \cdots \leq h(W_N)$. 
Now, we invoke Lemma \ref{lem:FindBasis} and it follows from the scaling property for entropy (Proposition \ref{app:scaling}) that 
\begin{align}
h\left(\sum_{i=1}^k  B_{ij} Z'_i\right) =
h\left(\sum_{n=1}^N W_n B_j e_n \right)
 \geq h\left(\sum_{n\in I_j} W_n B_j e_n  \right)
\geq h((W_n)_{n\in I_j}) + C,
\end{align}
for some constant $C$ depending only on the datum $(\mathbf{c},\mathbf{d},\mathbf{B})$ since $(B_j e_n)_{n\in I_j}$ form a basis of $E^j$ with a lower bound on degeneracy.

Now,  by telescoping and Lemma \ref{lem:FindBasis}, we may write
\begin{align*}
&\sum_{j=1}^m d_j h((W_n)_{n\in I_j}) \\
&= \sum_{n=1}^N h(W_n) \sum_{j=1}^m d_j |I_j \cap \{n\}|\\
&=\left( \sum_{j=1}^m d_j |I_j| \right) h(W_1) + \sum_{n=1}^{N-1} \left( h(W_{n+1})-h(W_n) \right) \sum_{j=1}^m d_j |I_j \cap \{n+1,\dots, N\}|\\
&=\left( \sum_{i=1}^k c_i |S_i| \right) h(W_1) + \sum_{n=1}^{N-1} \left( h(W_{n+1})-h(W_n) \right) \sum_{j=1}^m d_j |I_j \cap \{n+1,\dots, N\}|\\
&\geq \left( \sum_{i=1}^k c_i |S_i| \right) h(W_1) + \sum_{n=1}^{N-1} \left( h(W_{n+1})-h(W_n) \right) \left( \sum_{i=1}^k c_i |S_i  \cap \{n+1,\dots, N\}|+ \delta\right) \\
&= \sum_{n=1}^{N} h(W_n)  \sum_{i=1}^k c_i |S_i  \cap \{n\}| + \delta \sum_{n=1}^{N-1} \left( h(W_{n+1})-h(W_n) \right)\\
&=\sum_{i=1}^k c_i \left(\sum_{n\in S_i}h(W_n) \right)+ \delta  \left( h(W_{N})-h(W_1) \right)\\
&=\sum_{i=1}^k c_i h(Z_i) + \delta  \left( h(W_{N})-h(W_1) \right).
\end{align*}
So, we conclude 
\begin{align}
 \sum_{i=1}^k c_i h(c_i^{-1} Z_i) - \max_{P_{\mathbf{Z}}\in \Pi(P_{Z_1}, \dots, P_{Z_k})} \sum_{j=1}^m d_j h\left(\sum_{i=1}^k B_{ij} Z_i\right) \leq C' - \delta  \left( h(W_{N})-h(W_1) \right)  \label{tempZineq}
\end{align}
for constants $C',\delta\geq 0$ depending only on the datum $(\mathbf{c},\mathbf{d},\mathbf{B})$.  In particular, $D_g(\mathbf{c},\mathbf{d},\mathbf{B})$ is finite.  

Now, if $(\mathbf{c},\mathbf{d},\mathbf{B})$ is simple, then the last claim of  Lemma \ref{lem:FindBasis} implies  $\delta>0$.  Since the LHS of \eqref{tempZineq} is invariant to scaling each $Z_i$ by a common factor (due to the scaling condition \eqref{CONDITIONscaling}), it easily follows that there are constants $c_1,c_2>0$ depending only on the datum $(\mathbf{c},\mathbf{d},\mathbf{B})$ such that we may restrict our attention to $Z_i$ satisfying 
$$
c_1 \leq \Var(W_1) \leq \lambda_{\min}(\Cov(Z_i)) \leq \lambda_{\max}(\Cov(Z_i)) \leq \Var(W_N)  \leq c_2~~~\mbox{for each $1\leq i\leq k$.}
$$
Thus, in supremizing the LHS of  \eqref{gaussianEntropyDg}, it suffices to consider gaussian $Z_i$ with covariances in a compact set, with eigenvalues uniformly bounded away from zero. Equivalently, in supremizing the functional 
$$
 (K_1, \dots, K_k) \longmapsto \left( \sum_{i=1}^k c_i \log \det K_i -  \max_{K\in \Pi(K_1, \dots, K_k)} \sum_{j=1}^m d_j \log \det (B_j \Lc K \Lc B_j^*)\right)
$$
over $\prod_{i=1}^k S^+(E_i)$, it suffices to consider each $K_i$ in a compact set, with eigenvalues bounded away from zero.  It therefore follows by upper-semicontinuity (i.e., Lemma \ref{lem:semicontinuityOfgaussian}) that an extremizer exists.  Thus, $(\mathbf{c},\mathbf{d},\mathbf{B})$ is gaussian-extremizable. 
\end{proof}

\begin{proof}[Proof of Theorem \ref{thm:necSuffCondDgFinite}]
The claim is an immediate corollary of Propositions \ref{prop:necessaryCondtions} and \ref{prop:sufficientCondtions}.
\end{proof}

We may now also prove Theorem \ref{thm:FRBL}, which proceeds just as in \cite[Proof of Theorems 1.9 and 1.15]{bennett2008brascamp}:
\begin{proof}[Proof of Theorem \ref{thm:FRBL}]
In view of Theorem \ref{thm:FRBLdualityConstants}, we only need to prove that $D(\mathbf{c},\mathbf{d},\mathbf{B})=D_g(\mathbf{c},\mathbf{d},\mathbf{B})$. To do this, we induct on the dimension $\dim(E_0)$.  The case $\dim(E_0)=0$ is trivial, so assume the claim holds for smaller values of $\dim(E_0)$.

We may assume $D_g(\mathbf{c},\mathbf{d},\mathbf{B})<+\infty$, else the claim is trivial since $D(\mathbf{c},\mathbf{d},\mathbf{B})\geq D_g(\mathbf{c},\mathbf{d},\mathbf{B})$ by definition.  Thus, we assume that \eqref{CONDITIONscaling} and \eqref{CONDITIONdimension} hold, since these are necessary conditions for finiteness of $D_g(\mathbf{c},\mathbf{d},\mathbf{B})$ by Theorem \ref{thm:necSuffCondDgFinite}.  If $(\mathbf{c},\mathbf{d},\mathbf{B})$ is simple, then it is also gaussian-extremizable by Theorem \ref{thm:necSuffCondDgFinite}, so the desired claim follows by Theorem \ref{thm:FRBLunderGaussExt}.   On the other hand, if $(\mathbf{c},\mathbf{d},\mathbf{B})$ is not simple, then by Lemma \ref{lem:decomposabilityOfD} and the definition of simple, there exists a critical subspace $T\subset E_0$ for which 
$$
D_g(\mathbf{c},\mathbf{d},\mathbf{B}) = D_g(\mathbf{c},\mathbf{d},\mathbf{B}_T) +D_g(\mathbf{c},\mathbf{d},\mathbf{B}_{E_0/  T}) .
$$
By Lemma \ref{lem:Dsubadditive}, we also have
$$
D(\mathbf{c},\mathbf{d},\mathbf{B}) \leq D(\mathbf{c},\mathbf{d},\mathbf{B}_T) +D(\mathbf{c},\mathbf{d},\mathbf{B}_{E_0/  T}). 
$$
By the induction hypothesis, 
$$
D(\mathbf{c},\mathbf{d},\mathbf{B}_T) = D_g(\mathbf{c},\mathbf{d},\mathbf{B}_T)
$$
and 
$$
D(\mathbf{c},\mathbf{d},\mathbf{B}_{E_0/  T})=D_g(\mathbf{c},\mathbf{d},\mathbf{B}_{E_0/  T}).
$$
Combining the above estimates, we have
$$
D(\mathbf{c},\mathbf{d},\mathbf{B}) \leq D(\mathbf{c},\mathbf{d},\mathbf{B}_T) +D(\mathbf{c},\mathbf{d},\mathbf{B}_{E_0/  T}) = D_g(\mathbf{c},\mathbf{d},\mathbf{B}_T) +D_g(\mathbf{c},\mathbf{d},\mathbf{B}_{E_0/  T}) = D_g(\mathbf{c},\mathbf{d},\mathbf{B}). 
$$
Taken together with the trivial inequality $D_g(\mathbf{c},\mathbf{d},\mathbf{B}) \leq D(\mathbf{c},\mathbf{d},\mathbf{B})$, we must have equality.  This closes the induction and completes the proof. 
\end{proof}
In analogy to Lemma \ref{lem:decomposabilityOfD}, the following corollary is now immediate.  It is not needed elsewhere, but we state it for completeness. 
\begin{corollary}\label{cor:decomposabilityOfDD}
Let $T\subset E_0$ be a critical subspace for the datum $(\mathbf{c},\mathbf{d},\mathbf{B})$. It holds that 
$$
D(\mathbf{c},\mathbf{d},\mathbf{B}) = D(\mathbf{c},\mathbf{d},\mathbf{B}_T) +D(\mathbf{c},\mathbf{d},\mathbf{B}_{E_0/  T}) .
$$
\end{corollary}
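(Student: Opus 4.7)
The plan is to chain together three ingredients already available in the paper: Theorem \ref{thm:FRBL}, which identifies $D(\mathbf{c},\mathbf{d},\mathbf{B}) = D_g(\mathbf{c},\mathbf{d},\mathbf{B})$; Lemma \ref{lem:decomposabilityOfD}, which gives the analogous decomposition for $D_g$ along a critical subspace; and, if needed as a sanity check, Lemma \ref{lem:Dsubadditive}, which provides the subadditivity used as the upper bound.

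First I would verify that the two sub-data $(\mathbf{c},\mathbf{d},\mathbf{B}_T)$ and $(\mathbf{c},\mathbf{d},\mathbf{B}_{E_0/T})$ are themselves bona fide data. The product-form hypothesis $T = \bigoplus_{i=1}^k \pi_{E_i}(T)$ ensures that the restricted maps $B_{ij,T_i}$ and the ``quotient'' maps $B_{ij,T_i^\perp}$ assemble into valid collections with the same $\mathbf{c},\mathbf{d}$, and criticality of $T$ is precisely the statement that $\sum_i c_i \dim(\pi_{E_i}T) = \sum_j d_j \dim(B_j T)$, i.e.\ the consistency condition \eqref{eq:consistency} for the datum $(\mathbf{c},\mathbf{d},\mathbf{B}_T)$. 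Subtracting this identity from the consistency condition for the full datum yields the analogous identity for $(\mathbf{c},\mathbf{d},\mathbf{B}_{E_0/T})$. Hence Theorem \ref{thm:FRBL} is applicable to all three data simultaneously.

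With that in hand, the conclusion follows from a short chain of equalities. Applying Theorem \ref{thm:FRBL} to each of the three data gives
\begin{equation*}
D(\mathbf{c},\mathbf{d},\mathbf{B}) = D_g(\mathbf{c},\mathbf{d},\mathbf{B}), \quad D(\mathbf{c},\mathbf{d},\mathbf{B}_T) = D_g(\mathbf{c},\mathbf{d},\mathbf{B}_T), \quad D(\mathbf{c},\mathbf{d},\mathbf{B}_{E_0/T}) = D_g(\mathbf{c},\mathbf{d},\mathbf{B}_{E_0/T}),
\end{equation*}
while Lemma \ref{lem:decomposabilityOfD}, invoked for the critical subspace $T$, supplies
\begin{equation*}
D_g(\mathbf{c},\mathbf{d},\mathbf{B}) = D_g(\mathbf{c},\mathbf{d},\mathbf{B}_T) + D_g(\mathbf{c},\mathbf{d},\mathbf{B}_{E_0/T}).
\end{equation*}
Substituting the three $D = D_g$ identities into this decomposition produces the desired equality.

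There is no real obstacle here: the induction carried out in the proof of Theorem \ref{thm:FRBL} already reduces the general case to the gaussian-extremizable and simple cases using precisely this critical-subspace decomposition, so Corollary \ref{cor:decomposabilityOfDD} is obtained by running that step in reverse once Theorem \ref{thm:FRBL} is in place. The only minor care concerns possible infinite values of $D$ or $D_g$ on one of the pieces, but since $D \geq D_g > -\infty$ always and Lemma \ref{lem:decomposabilityOfD} was proved unconditionally (its infinite case being handled by Lemma \ref{lem:Dsubadditive}), the identity continues to hold in $[-\infty,+\infty]$ with the usual conventions.
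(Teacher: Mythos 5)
Your proposal is correct and matches the paper's intent exactly: the paper states the corollary is ``now immediate'' precisely because Theorem \ref{thm:FRBL} gives $D=D_g$ for all three data and Lemma \ref{lem:decomposabilityOfD} supplies the decomposition for $D_g$, which is the chain you write out. Your extra checks (that the sub-data inherit the consistency condition, and that infinite values cause no trouble since $D_g>-\infty$ always) are sound and fill in the small details the paper leaves implicit.
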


\section{Connections to other Brascamp-Lieb-type inequalities} \label{sec:connections}
\subsection{The Brascamp-Lieb inequality} \label{sec:connectionsBL}
It is clear by now that the Brascamp-Lieb inequality is a special case of the forward-reverse inequality.  On the other hand, if we assume  
  \eqref{CONDITIONscaling} holds and that \eqref{CONDITIONdimension}  holds for \emph{all} subspaces, not just those of product form, then finiteness of $D(\mathbf{c},\mathbf{d},\mathbf{B})$ be established as a consequence of the finiteness conditions for the forward Brascamp-Lieb inequality.  The argument is as follows, and  is due to Michael Christ.  

Assume \eqref{CONDITIONscaling} and further assume that \eqref{CONDITIONdimension} holds for all subspaces $T$ (not just those of product form). Define the index sets $I :=\{1,\dots, k\}$ and $J = \{1,\dots,m\}$.  Since the statement and conclusion are invariant to rescaling $\mathbf{c},\mathbf{d}$ by the same constant, we assume without loss of generality that $\max_{i\in I}c_i < 1$   and $\max_{j\in J}d_j < 1$.  Now, assuming $I,J$ are disjoint index sets, we define the augmented index set $J^{\star} = I \cup J$.  For $j \in J^{\star}\setminus J$, define $d_j =(1-c_i)$, $E^j = E_i$, and $B_j = \pi_{E_i}\Lc^{-1}$.   Now, if $(f_i)_{i\in I}$ and $(g_j)_{j\in J}$ satisfy \eqref{FRBLhypIntro}, then defining $g_j = f_i$ for $j \in J^{\star}\setminus J$, it follows that 
$$
\prod_{i\in I} f_i(\pi_{E_i}(x))  \leq \prod_{j\in J} g^{d_j}_j (B_j \Lc x) \prod_{i\in I} f^{1-c_i}_i (\pi_{E_i} x) =   \prod_{j\in J^{\star}} g^{d_j}_j (B_j \Lc x)  .
$$
Integrating over both sides and using the fact that $E_0 = \bigoplus_{i=1}^k E_i$, we obtain
\begin{align}
 \prod_{i\in I} \int_{E_i} f_i  \leq \int_{E_0}   \prod_{i\in I} f_i(\pi_{E_i}(x)) dx    \leq  \int_{E_0} \prod_{j\in J^{\star}} g^{d_j}_j (B_j \Lc x)  dx .\label{IntegrateBothSides1}
\end{align}
By the finiteness criteria for the forward Brascamp-Lieb inequality \cite[Theorem 1.13]{bennett2008brascamp}, 
\begin{align}
\int_{E_0} \prod_{j\in J^{\star}} g^{d_j}_j (B_j \Lc x)  dx \leq e^D \prod_{j\in J^{\star}}\left( \int_{E^j} g_j \right)^{d_j}= e^D \prod_{j\in J }\left( \int_{E^j} g_j \right)^{d_j}\prod_{i\in I }\left( \int_{E_i} f_i \right)^{1-c_i}, \label{applyFBL}
\end{align}
where $D<+\infty$ provided
\begin{align}
\sum_{j\in J^{\star}} d_j \dim ( B_j T ) \geq  \dim( T) ~~\mbox{for all subspaces $T\subseteq E_0$, }\label{dimCondnFBL}
\end{align}
and further holding with equality when $T=E_0$.  Assuming this is true for the moment, we combine \eqref{IntegrateBothSides1} and \eqref{applyFBL} to conclude $D(\mathbf{c},\mathbf{d},\mathbf{B})\leq D <+\infty$, as desired. 

So, to verify \eqref{dimCondnFBL} and therefore justify the application \eqref{applyFBL}, observe that, since we assumed \eqref{CONDITIONdimension} for all subspaces $T\subseteq E_0$,
\begin{align}
\sum_{j\in J^{\star}} d_j \dim ( B_j T ) &= \sum_{j\in J} d_j \dim ( B_j T )  + \sum_{i\in I} (1-c_i) \dim ( \pi_{E_i} T ) \\
&\geq \sum_{i\in I}   \dim ( \pi_{E_i} T )   \geq \dim(T),
\end{align}
with equality holding when $T=E_0$ by \eqref{CONDITIONscaling}.

\begin{remark}
The disadvantage of the above argument is that it will not, in general, recover the sharp constant (and therefore the gaussian saturation property) for the forward-reverse Brascamp-Lieb inequality, even under the stronger condition that \eqref{CONDITIONdimension} holds for all subspaces $T\subseteq E_0$.  
\end{remark}

\begin{remark}
Example \ref{ex:RevYoung} (reverse Young inequality) provides an important counterpoint to the above discussion.  The reader can check that \eqref{CONDITIONdimension} is verified for all product-form subspaces, however it fails to hold for some non-product form subspaces.  Hence,  bootstrapping   the direct Brascamp-Lieb inequality as above would fail to give a finite constant in the reverse Young inequality. 
\end{remark}

\subsection{The Barthe-Wolff inverse Brascamp-Lieb inequality} \label{subsec:BartheWolff}

The following  ``inverse" Brascamp-Lieb inequality was announced by Barthe and Wolff in the note \cite{barthe2014positivity} and proved rigorously in \cite{barthe2018positive}. We write it in a form  to emphasize the connection to Theorem \ref{thm:FRBL}.
\begin{theorem} \label{thm:BartheWolff}
Let $C \in (-\infty,+\infty]$ be any constant, and let previously introduced notation prevail.  For any measurable functions   $f_i : E_i \to \R^+$,  $1\leq i \leq k$ and $g_j : E^j \to \R^+$,  $1\leq j\leq m$, 
\begin{align}
 \prod_{i=1}^k \left( \int_{E_i} f_i \right)^{c_i}  \prod_{j=1}^m \left( \int_{E^j} g_j \right)^{-d_j}    \leq e^C \int_{E_0} \prod_{i=1}^k f_i^{c_i}(\pi_{E_i} x)   \prod_{j=1}^m g_j^{-d_j}(B_j x)  dx  \label{BartheWolff}
\end{align}
if and only if \eqref{BartheWolff} holds for all centered gaussian functions $(f_i)_{1\leq i\leq k}$ and $(g_j)_{1\leq j\leq m}$. 
\end{theorem}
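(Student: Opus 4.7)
My plan is to deduce Theorem \ref{thm:BartheWolff} as a direct corollary of Theorem \ref{thm:FRBL} by embedding the Barthe--Wolff problem into a forward-reverse Brascamp--Lieb problem with one additional output space. Specifically, given the datum $(\mathbf{c},\mathbf{d},\mathbf{B})$ of Theorem \ref{thm:BartheWolff}, I form the augmented datum $(\mathbf{c},\tilde{\mathbf{d}},\tilde{\mathbf{B}})$ where $\tilde{\mathbf{d}}=(d_1,\dots,d_m,1)$, $E^{m+1}:=E_0$, and the augmented maps are defined by $\tilde B_{ij}:=c_i^{-1}B_{ij}$ for $1\leq j\leq m$ and $\tilde B_{i,m+1}:=c_i^{-1}\,\iota_i$, where $\iota_i:E_i\hookrightarrow E_0$ is the canonical inclusion. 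The scalings are chosen so that $\sum_{i}c_i \tilde B_{ij}x_i=B_j x$ (for $j\leq m$ and $x=\sum_i x_i\in E_0$) and $\sum_{i}c_i\tilde B_{i,m+1}x_i=x$. The consistency condition \eqref{eq:consistency} for the augmented datum is exactly the scaling invariance needed for the Barthe--Wolff inequality to be non-trivial; otherwise both sides trivially diverge.

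Next, I match the two optimization problems. For arbitrary non-negative measurable $f_i$, $g_j$ as in \eqref{BartheWolff}, I set $\tilde g_j:=g_j$ for $j\leq m$ and define
\[
h(x)\;:=\;\prod_{i=1}^{k}f_i^{c_i}(\pi_{E_i}x)\prod_{j=1}^{m}g_j^{-d_j}(B_j x), \qquad \tilde g_{m+1}:=h.
\]
With this choice, the FR-BL hypothesis \eqref{FRBLhypIntro} for $(\mathbf{c},\tilde{\mathbf{d}},\tilde{\mathbf{B}})$ reduces to the tautology $\prod_i f_i^{c_i}(x_i)=\prod_i f_i^{c_i}(x_i)$, and Theorem \ref{thm:FRBL} yields \eqref{BartheWolff} with $C=D(\mathbf{c},\tilde{\mathbf{d}},\tilde{\mathbf{B}})$. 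Conversely, any admissible tuple for FR-BL on $(\mathbf{c},\tilde{\mathbf{d}},\tilde{\mathbf{B}})$ must have $\tilde g_{m+1}\geq h$ pointwise (hence $\int \tilde g_{m+1}\geq \int h$), so the supremum of the FR-BL ratio equals the supremum of the Barthe--Wolff ratio. Thus the best constant in \eqref{BartheWolff} equals $D(\mathbf{c},\tilde{\mathbf{d}},\tilde{\mathbf{B}})$.

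Finally, I identify the gaussian optimization in Theorem \ref{thm:BartheWolff} with the one defining $D_g(\mathbf{c},\tilde{\mathbf{d}},\tilde{\mathbf{B}})$. For centered gaussians $f_i(x)=\exp(-\langle V_i x,x\rangle)$ and $g_j(y)=\exp(-\langle U_j y,y\rangle)$, a direct computation shows that $h$ is itself a centered gaussian on $E_0$ with quadratic form $\Theta:=V_{\mathbf{c}}-\sum_{j\leq m}d_j B_j^* U_j B_j$ (provided $\Theta>0$; otherwise $\int h=+\infty$ and the inequality is vacuous). The FR-BL constraint \eqref{Qconstraint} for the augmented datum collapses to $U_{m+1}\leq\Theta$, and since $d_{m+1}\log\det U_{m+1}$ enters the objective \eqref{defDg} with positive sign, the optimal choice is $U_{m+1}=\Theta$. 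At this optimum the augmented FR-BL objective coincides with the logarithm of the gaussian Barthe--Wolff ratio, hence the best $C$ for gaussians in \eqref{BartheWolff} equals $D_g(\mathbf{c},\tilde{\mathbf{d}},\tilde{\mathbf{B}})$. Combining with the identity $D(\mathbf{c},\tilde{\mathbf{d}},\tilde{\mathbf{B}})=D_g(\mathbf{c},\tilde{\mathbf{d}},\tilde{\mathbf{B}})$ from Theorem \ref{thm:FRBL} completes the proof. The essential content resides in Theorem \ref{thm:FRBL}; the main subtlety here is just keeping track of the $c_i$-scalings when passing between the two formulations, plus standard measurability/integrability bookkeeping for $h$.
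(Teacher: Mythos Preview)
Your approach is correct and is essentially the same as the paper's: both augment the datum with $E^{m+1}=E_0$, $d_{m+1}=1$, and the identity map, set $g_{m+1}$ equal to the integrand $h$, observe that this choice is best-possible among admissible $g_{m+1}$, and then invoke the gaussian saturation identity $D=D_g$ from Theorem~\ref{thm:FRBL}. Your treatment is in fact slightly more explicit than the paper's sketch, spelling out the $c_i^{-1}$ rescaling needed to reconcile the hypothesis~\eqref{FRBLhypIntro} with the Barthe--Wolff argument $B_j x$, and checking that the gaussian optimization for the augmented datum reduces (via the optimal choice $U_{m+1}=\Theta$) to the gaussian Barthe--Wolff problem.
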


For sake of comparison, we restate the gaussian saturation part of Theorem \ref{thm:FRBL} here in equivalent form as follows:
\begin{theorem} \label{thm:FRBLBartheWolff}
Let $D \in (-\infty,+\infty]$ be any constant, and let previously introduced notation prevail.  For any measurable functions   $f_i : E_i \to \R^+$,  $1\leq i \leq k$ and $g_j : E^j \to \R^+$,  $1\leq j\leq m$ satisfying
\begin{align}
\prod_{i=1}^k f_i^{c_i}(\pi_{E_i} x) \leq \prod_{j=1}^m g_j^{d_j}\left( B_j x \right)\hspace{1cm}\forall x \in E_0 ,\label{FRBLhypBW}
\end{align}
we have  
\begin{align}
\prod_{i=1}^k \left( \int_{E_i} f_i  \right)^{c_i} \leq e^D \prod_{j=1}^m \left( \int_{E^j} g_j  \right)^{d_j},\label{FRBL_BW}
\end{align}
if and only if \eqref{FRBL_BW} holds for all centered gaussian functions $(f_i)_{1\leq i\leq k}$ and $(g_j)_{1\leq j\leq m}$ satisfying \eqref{FRBLhypBW}.
\end{theorem}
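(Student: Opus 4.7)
The plan is to observe that Theorem \ref{thm:FRBLBartheWolff} follows immediately from the gaussian saturation part of Theorem \ref{thm:FRBL} by absorbing the scalars $c_i$ into the linear maps. The only difference between the two statements is the presence of the factor $c_i$ in front of $B_{ij}$ in the hypothesis \eqref{FRBLhypIntro}, compared to its absence in the hypothesis \eqref{FRBLhypBW}; once this cosmetic difference is accounted for by re-parameterizing the datum, the two statements become literally the same.

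Specifically, given the datum $(\mathbf{c},\mathbf{d},\mathbf{B})$ appearing in Theorem \ref{thm:FRBLBartheWolff}, I would introduce the modified datum $(\mathbf{c},\mathbf{d},\mathbf{B}')$ with $B'_{ij} := c_i^{-1} B_{ij}$ for all $1\leq i\leq k$ and $1\leq j\leq m$. Using the identification $E_0 = \bigoplus_{i=1}^k E_i$, writing $x_i = \pi_{E_i}(x)$ and substituting, the hypothesis \eqref{FRBLhypBW} becomes
\begin{equation*}
\prod_{i=1}^k f_i^{c_i}(x_i) \leq \prod_{j=1}^m g_j^{d_j}\!\left( \sum_{i=1}^k c_i B'_{ij}\, x_i \right)\hspace{1cm}\forall x_i\in E_i,~1\leq i\leq k,
\end{equation*}
which is exactly \eqref{FRBLhypIntro} for the datum $(\mathbf{c},\mathbf{d},\mathbf{B}')$. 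Since the integrals $\int_{E_i} f_i$ and $\int_{E^j} g_j$ appearing in the conclusion \eqref{FRBL_BW} are independent of the maps $B_{ij}$, the smallest admissible constant $D$ in Theorem \ref{thm:FRBLBartheWolff} for the original datum $(\mathbf{c},\mathbf{d},\mathbf{B})$ equals $D(\mathbf{c},\mathbf{d},\mathbf{B}')$ in the sense of Definition \ref{def:DcdB}.

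The first assertion of Theorem \ref{thm:FRBL}, applied to the datum $(\mathbf{c},\mathbf{d},\mathbf{B}')$, then yields $D(\mathbf{c},\mathbf{d},\mathbf{B}')=D_g(\mathbf{c},\mathbf{d},\mathbf{B}')$, i.e.\ the best constant is already attained when one restricts to centered gaussian functions $(f_i)_{1\leq i\leq k}$ and $(g_j)_{1\leq j\leq m}$ satisfying the hypothesis. Undoing the substitution $B'_{ij}\mapsto c_i B'_{ij} = B_{ij}$ (which only relabels the datum and does not touch the functions), this gives precisely the conclusion of Theorem \ref{thm:FRBLBartheWolff}. There is no real obstacle here: the theorem is merely an equivalent reformulation of the gaussian saturation statement already established in Theorem \ref{thm:FRBL}, and all of the analytic content, including the Gaussian-extremizable case (Section \ref{sec:GaussianExtremizableCase}) and the decomposition machinery (Section \ref{sec:Decomposability}), is reused verbatim through the reparametrization.
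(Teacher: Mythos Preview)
Your proof is correct and matches the paper's own treatment: the paper introduces Theorem~\ref{thm:FRBLBartheWolff} explicitly as a restatement of the gaussian saturation part of Theorem~\ref{thm:FRBL}, having already noted (in the remark following \eqref{eq:DualIdentity}) that the scalar factors $c_i$ in \eqref{FRBLhypIntro} can be absorbed into the maps without affecting the first claim. Your reparametrization $B'_{ij}=c_i^{-1}B_{ij}$ makes this absorption explicit, and nothing more is needed.
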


To see the connection between the two results, we first note that  Theorem \ref{thm:FRBLBartheWolff} implies Theorem \ref{thm:BartheWolff} by augmenting the datum $(\mathbf{c},\mathbf{d},\mathbf{B})$ with $E^{m+1}=E_0$, $d_{m+1}=1$ and $B_{m+1} = \operatorname{id}_{E_0}$.   By choosing the function  $g_{m+1} : E^{m+1}\to \R^+$ according to 
$$
g_{m+1}(x) =  \prod_{i=1}^k f_i^{c_i}(\pi_{E_i} x)   \prod_{j=1}^m g_j^{-d_j}(B_j x),  
$$
the hypothesis \eqref{FRBLhypBW} is  satisfied, and therefore \eqref{BartheWolff} follows from \eqref{FRBL_BW}.  For given functions $(f_i)_{1\leq i\leq k}$ and $(g_j)_{1\leq j\leq m}$, the above choice of $g_{m+1}$ is clearly best-possible, so the best constant $C$ in \eqref{BartheWolff} must be equal to the best constant $D$ in Theorem \ref{thm:FRBLBartheWolff} for the augmented datum, which can be computed by considering only centered gaussian functions.

In fact, the reverse is also true.  That is, Theorem \ref{thm:FRBLBartheWolff} may be derived from Theorem \ref{thm:BartheWolff}.  The argument is a bit   less straightforward in comparison, but nevertheless  brief.  The idea is to apply Theorem \ref{thm:BartheWolff} with exponents $c'_i = 1+t c_i$ and $d'_j = t d_j$, where $t>0$ is a parameter that will tend to $+\infty$.    For this choice of exponents, we apply the pointwise inequality \eqref{FRBLhypBW} to see that the RHS of \eqref{BartheWolff} can be upper bounded as
$$
 \int_{E_0} \prod_{i=1}^k f_i^{c'_i}(\pi_{E_i} x)   \prod_{j=1}^m g_j^{-d'_j}(B_j x)  dx \leq  \int_{E_0} \left( \prod_{i=1}^k f_i^{c'_i}(\pi_{E_i} x)  \right) \left( \prod_{i=1}^k f_i^{-tc_i}(\pi_{E_i} x)  \right) dx =  \prod_{i=1}^k  \left( \int_{E_i} f_i \right).
$$
Invoking \eqref{BartheWolff} itself and dividing exponents by $t$, we find that \eqref{FRBLhypBW} implies
\begin{align}
 \prod_{i=1}^k \left( \int_{E_i} f_i \right)^{c_i}   \leq e^{C_t/t}  \prod_{j=1}^m \left( \int_{E^j} g_j \right)^{d_j}   , \notag
 \end{align}
 where $C_t$ denotes the best constant in the inequality \eqref{BartheWolff} for the exponents $(c'_i)_{1\leq i\leq k}$ and $(d'_j)_{1\leq j\leq m}$.  In particular, for $D$ the best constant in \eqref{FRBL_BW}, we have $D \leq C_t/t$ for all $t>0$.  By the gaussian saturation claim of Theorem \ref{thm:BartheWolff} and direct computation (see \cite[Section 2.2]{barthe2018positive}), one may calculate 
 \begin{align}
 e^{2 C_t} = \sup \frac{\det\left( \sum_{i=1}^k c'_i \pi^*_{E_i}C_i   \pi_{E_i}  -  \sum_{j=1}^m d'_j B^*_j A_j  B_j    \right) }{  \prod_{i=1}^k (\det C_i )^{c'_i}  \prod_{j=1}^m (\det A_j )^{-d'_j}   },
 \end{align}
  where the supremum is over all $C_i \in S^+(E_i)$ and $A_j \in S^+(E^j)$ satisfying 
  \begin{align}
   \sum_{i=1}^k c'_i \pi^*_{E_i}C_i   \pi_{E_i}  \geq  \sum_{j=1}^m d'_j B^*_j A_j  B_j    . \label{matrixIneq}
  \end{align}
  The set of $(C_i)_{1\leq i\leq k}$ and $(A_j)_{1\leq j\leq m}$ satisfying \eqref{matrixIneq} are monotone decreasing in $t$ (with respect to inclusion), so in calculating $\liminf_{t\to \infty}C_t/t$, we need only consider positive-definite $(C_i)_{1\leq i\leq k}$ and $(A_j)_{1\leq j\leq m}$ in the intersection of all such sets; i.e., those satisfying 
  \begin{align}
   \sum_{i=1}^k c_i \pi^*_{E_i}C_i   \pi_{E_i}  \geq  \sum_{j=1}^m d_j B^*_j A_j  B_j    . \label{matrixIneq2}
  \end{align}
Assuming \eqref{matrixIneq2} holds, we bound
 \begin{align*}
 \frac{\det\left( \sum_{i=1}^k c'_i \pi^*_{E_i}C_i   \pi_{E_i}  -  \sum_{j=1}^m d'_j B^*_j A_j  B_j    \right)^{1/t} }{  \prod_{i=1}^k (\det C_i )^{1/t+c_i}  \prod_{j=1}^m (\det A_j )^{-d_j}   } &\leq  \frac{\det\left( \sum_{i=1}^k c'_i \pi^*_{E_i}C_i   \pi_{E_i}    \right)^{1/t} }{  \prod_{i=1}^k (\det C_i )^{1/t+c_i}  \prod_{j=1}^m (\det A_j )^{-d_j}   } \\
 &=\frac{ \prod_{i=1}^k (1+t c_i)^{\dim(E_i)/t}  }{  \prod_{i=1}^k (\det C_i )^{ c_i}  \prod_{j=1}^m (\det A_j )^{-d_j}   } .
 \end{align*}
Hence, 
$$
\liminf_{t\to \infty} C_t/t \leq \sup \left( \frac{1}{2}\sum_{j=1}^{m} d_j \log\det A_j -\frac{1}{2}\sum_{i=1}^{k} c_i \log\det C_i \right), $$
where the supremum is over all $C_i \in S^+(E_i)$ and $A_j \in S^+(E^j)$ satisfying  \eqref{matrixIneq2}.  This is precisely the best constant $D$ obtained in \eqref{FRBL_BW} by considering only centered gaussian functions, so the proof is complete. 

\begin{remark}
The above  argument showing equivalence of Theorems  \ref{thm:BartheWolff} and  \ref{thm:FRBLBartheWolff}  is due to Pawe\l{} Wolff.  Despite their formal equivalence, both results have their merits, and the techniques used to derive each are complimentary.  In particular, Barthe and Wolff use an optimal transport argument, while our proof relies primarily on duality and structural decomposition.   Additionally, the different formulations of the results have their respective advantages.  For example, Theorem \ref{thm:FRBL} highlights the unification of the Brascamp-Lieb and Barthe inequalities, together with the duality of best constants \eqref{eq:DualIdentity}.  On the other hand, Barthe and Wolff's formulation emphasizes an inverse principle to Lieb's \cite{lieb1990gaussian}.    Our proof is perhaps simpler since it avoids the detailed case analysis encountered in \cite{barthe2018positive}, but preference may depend on the reader's taste.
\end{remark}

\begin{remark}
Theorem \ref{thm:BartheWolff} is a particular case of the general inverse inequality by Barthe and Wolff which allows for integration against a nontrivial gaussian kernel in the RHS of \eqref{BartheWolff}, and for which the gaussian saturation property remains valid.  As we will see in the next section, their geometric inequality can be recovered in full generality with nontrivial gaussian kernel as a consequence of the geometric forward-reverse Brascamp-Lieb inequality.  Hence, there is a formal equivalence between the geometric Barthe-Wolff inequalities stated with (i) trivial gaussian kernel; or (ii) nontrivial gaussian kernel.  Although we do not pursue it here, it is an interesting question whether this formal equivalence continues to hold for  non-geometric instances of the Barthe-Wolff inequality. 
\end{remark}

\subsection{Inequalities with Gaussian kernels}\label{sec:gaussianKernels}

In this section,  we establish  inequalities for integrals against gaussian kernels as applications of our main results.  They are  easy corollaries of the geometric forward-reverse Brascamp-Lieb inequality.  Similar results could be stated for the general forward-reverse inequality, but we restrict attention to the geometric case to simplify the discussion.  %

\begin{definition}
For a Euclidean space $E$, we let $\gamma_{E}$ denote the standard gaussian measure on $E$.  That is,
$$
d\gamma_E(x) = \frac{1}{(2\pi)^{\dim(E)/2}}e^{-\frac{1}{2}|x|^2 }dx.
$$
\end{definition}

\begin{theorem}\label{thm:gaussianGeometricFRBL}
Let $H$ be a Euclidean space,   and $Q\in S(H)$ with signature $(s^+(Q),s^-(Q))$.  Consider linear maps $U_i : H\to E_i$ and $V_j : H \to E^j$  satisfying   $U_i U_i^* = \operatorname{id}_{E_i}$ and  $V_j V_j^* = \operatorname{id}_{E^j}$, for all $1\leq i \leq k$ and $1\leq j\leq m$.   Let $(c_i)_{1\leq i\leq k}$ and $(d_j)_{1\leq j\leq m}$ be positive numbers, and suppose that
\begin{align}
Q + \sum_{i=1}^k c_i U_i^* U_i =  \sum_{j=1}^m d_j V_j^* V_j > 0, \mbox{~~~and~~~}\dim(H) \geq s^+(Q) + \sum_{i=1}^k \dim(E_i).\label{gaussKernHyp1}
\end{align}
 If   measurable functions $f_i : E_i \to \R^+$,  $1\leq i \leq k$ and $g_j : E^j \to \R^+$,  $1\leq j\leq m$ satisfy
\begin{align}
\prod_{i=1}^k f_i^{c_i}(U_i x) \leq \prod_{j=1}^m g_j^{d_j}(V_j x)\hspace{1cm}\forall x\in H, \label{gaussKernelHyp}
\end{align}
then
\begin{align}
\prod_{i=1}^k \left(\int_{E_i} f_i d\gamma_{E_i} \right)^{c_i}
\leq  \prod_{j=1}^m \left(\int_{E^j} g_j d\gamma_{E^j} \right)^{d_j} . \label{gaussKernIneq}
\end{align}
\end{theorem}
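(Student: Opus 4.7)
The plan is to reduce Theorem \ref{thm:gaussianGeometricFRBL} to the geometric forward-reverse Brascamp-Lieb inequality (Corollary \ref{cor:geometric2}) by absorbing the Gaussian measures $\gamma_{E_i}, \gamma_{E^j}$ into the functions, and handling the indefinite quadratic form $Q$ by splitting it into positive and negative parts which are then absorbed as additional isometric maps on opposite sides of the inequality.

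First I would decompose $Q = P^{*} P - N^{*} N$, where $P: H \to \R^{s^+(Q)}$ and $N: H \to \R^{s^-(Q)}$ are the co-isometries obtained from the spectral decomposition of $Q$ (so $P P^{*} = \operatorname{id}$, $N N^{*} = \operatorname{id}$). Setting $\ell := \dim(H) - s^+(Q) - \sum_{i=1}^k \dim(E_i) \geq 0$ by the second half of \eqref{gaussKernHyp1}, and choosing any co-isometry $R: H \to \R^{\ell}$, I would introduce an augmented datum with additional target spaces $\tilde E_{k+1} := \R^{s^+(Q)}$, $\tilde E_{k+2} := \R^{\ell}$, maps $U_{k+1} := P$, $U_{k+2} := R$, and coefficients $c_{k+1} = c_{k+2} := 1$ on the $f$-side; symmetrically $\tilde E^{m+1} := \R^{s^-(Q)}$, $\tilde E^{m+2} := \R^{\ell}$ with $V_{m+1} := N$, $V_{m+2} := R$, $d_{m+1} = d_{m+2} := 1$ on the $g$-side. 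The frame identity $Q + \sum_i c_i U_i^{*} U_i = \sum_j d_j V_j^{*} V_j$ together with $Q = P^{*} P - N^{*} N$ is then equivalent to
\[
\sum_{i=1}^k c_i U_i^{*} U_i + P^{*} P + R^{*} R \;=\; \sum_{j=1}^m d_j V_j^{*} V_j + N^{*} N + R^{*} R,
\]
so the augmented datum satisfies the frame condition of Corollary \ref{cor:geometric2}. The common value is positive definite (since $\sum_j d_j V_j^{*} V_j > 0$ by hypothesis), which by Remark \ref{rmk:equivSurj} delivers the bijection condition once $H$ is identified with $\bigoplus_{i=1}^{k+2} \tilde E_i$, whose total dimension matches $\dim(H)$ by construction of $\ell$.

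I would next set $\tilde f_i(x) := f_i(x) e^{-|x|^2/2}$ for $1 \le i \le k$ and $\tilde f_{k+1}(x) = \tilde f_{k+2}(x) := e^{-|x|^2/2}$, and analogously for $\tilde g_j$. Expanding $|U_i x|^2 = \langle U_i^{*} U_i x, x\rangle$ and applying the augmented frame equation, the pointwise hypothesis $\prod_{i=1}^{k+2} \tilde f_i^{c_i}(\tilde U_i x) \le \prod_{j=1}^{m+2} \tilde g_j^{d_j}(\tilde V_j x)$ reduces \emph{exactly} to the given inequality \eqref{gaussKernelHyp}, because the Gaussian exponentials on both sides collapse to the same factor $\exp(-\tfrac12\langle M x, x \rangle)$, where $M$ is the common value in the frame equation. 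Corollary \ref{cor:geometric2} therefore applies and yields $\prod_i (\int \tilde f_i)^{c_i} \le \prod_j (\int \tilde g_j)^{d_j}$.

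Finally, writing each integral as $\int_{E_i} \tilde f_i = (2\pi)^{\dim(E_i)/2} \int f_i \, d\gamma_{E_i}$ (and analogously for $g_j$, with explicit normalization factors $(2\pi)^{s^+(Q)/2}$, $(2\pi)^{s^-(Q)/2}$, $(2\pi)^{\ell/2}$ coming from the auxiliary slots), the $(2\pi)$ powers on the two sides match precisely by the scaling condition $\sum_i c_i \dim(E_i) + s^+(Q) + \ell = \sum_j d_j \dim(E^j) + s^-(Q) + \ell$, which follows from taking traces in the augmented frame equation and using the co-isometry properties. Cancelling this common factor leaves exactly \eqref{gaussKernIneq}. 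The main obstacle is conceptual rather than technical: one must see that the dimension hypothesis $\dim(H) \geq s^+(Q) + \sum_i \dim(E_i)$ is precisely what permits the insertion of the auxiliary co-isometric slot $R$, and it is this that allows the augmented direct sum of target spaces to be identified with $H$, so that Corollary \ref{cor:geometric2} can be invoked.
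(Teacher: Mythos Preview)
Your overall strategy---reduce to Corollary \ref{cor:geometric2} by spectrally decomposing $Q$ and absorbing Gaussian factors into the functions---is the same as the paper's. However, there is a genuine error in the decomposition step.

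You write $Q = P^*P - N^*N$ with $P: H \to \R^{s^+(Q)}$, $N: H \to \R^{s^-(Q)}$ co-isometries (i.e.\ $PP^* = \operatorname{id}$, $NN^* = \operatorname{id}$). But a co-isometry satisfies $(P^*P)^2 = P^*(PP^*)P = P^*P$, so $P^*P$ is an orthogonal projection. Thus your decomposition forces every nonzero eigenvalue of $Q$ to equal $\pm 1$; it fails for general $Q \in S(H)$. Since Corollary \ref{cor:geometric2} requires $U_{k+1} U_{k+1}^* = \operatorname{id}$, you cannot simply rescale $P$ to fix this without destroying the frame identity.

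The paper resolves this by decomposing $Q^+ = \sum_{\ell=1}^{s^+(Q)} \lambda_\ell\, u_\ell^* u_\ell$ into rank-one pieces, with each $u_\ell : H \to \R$ a unit covector (so $u_\ell u_\ell^* = \operatorname{id}_\R$), and then augmenting the datum with $s^+(Q)$ separate one-dimensional slots on the $f$-side carrying \emph{coefficients} $\lambda_\ell$ (not $1$) and auxiliary function the standard Gaussian density $\phi$. Since $(\int_\R \phi)^{\lambda_\ell} = 1$, these slots contribute nothing to the integral inequality, while the frame condition of Corollary \ref{cor:geometric2} is recovered exactly. The same is done for $Q^-$ on the $g$-side with coefficients $\mu_\ell$.

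A secondary point: your auxiliary slot $R: H \to \R^\ell$ is superfluous. The left-hand side of the identity $\sum_i c_i U_i^* U_i + Q^+ = \sum_j d_j V_j^* V_j + Q^-$ has rank $\dim(H)$ by the positivity hypothesis, so subadditivity of rank gives $\dim(H) \leq s^+(Q) + \sum_i \dim(E_i)$; combined with \eqref{gaussKernHyp1} this is an equality, hence $\ell = 0$. The paper notes this and uses it to verify the bijection condition directly.
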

\begin{proof}
Decompose $Q = Q^+ - Q^-$, with $Q^+,Q^- \in S^+(H)$.  By spectral decomposition,  write
$$
Q^+ = \sum_{\ell=1}^{s^+(Q)} \lambda_{\ell} u^*_{\ell} u_{\ell}; \hspace{1cm} Q^- = \sum_{\ell=1}^{s^-(Q)} \mu_{\ell} v^*_{\ell} v_{\ell},
$$
where $\lambda_{\ell}>0$ (resp. $\mu_{\ell}>0$) and $u_{\ell} u^*_{\ell} = \operatorname{id}_{\R}$ (resp. $v_{\ell} v^*_{\ell} = \operatorname{id}_{\R}$).  Thus, the first assumption in \eqref{gaussKernHyp1} can be written as
\begin{align}
\sum_{i=1}^k c_i U_i^* U_i  + \sum_{\ell=1}^{s^+(Q)} \lambda_{\ell} u^*_{\ell} u_{\ell}  =  \sum_{j=1}^m d_j V_j^* V_j  + \sum_{\ell=1}^{s^-(Q)} \mu_{\ell} v^*_{\ell} v_{\ell} >0 \label{eigenIdent}
\end{align}
In particular the LHS is a linear map with rank equal to $\dim(H)$ by the positive-definiteness assumption, so by subadditivity of rank, it holds that $\dim(H) \leq s^+(Q) + \sum_{i=1}^k \dim(E_i)$.  By \eqref{gaussKernHyp1}, we must have equality.  Thus, we can consider the map
$$
x\in H \to (U_1 x, \dots, U_k x, u_1 x, \dots, u_{s^+(Q)}x)
$$
as a bijective linear map from $H$ to $H$.  Now, if \eqref{gaussKernelHyp} is satisfied, then \eqref{eigenIdent} implies that we also have
\begin{align}
&\prod_{i=1}^k \left( f_i(U_i x) \frac{1}{(2\pi)^{\dim(E_i)/2}}e^{-\frac{1}{2} | U_i x|^2 }\right)^{c_i} \times  \prod_{\ell=1}^{s^+(Q)} \phi^{\lambda_{\ell}}(u_{\ell} x)  \notag\\
&\leq  \prod_{j=1}^m \left(  g_j(V_j x) \frac{1}{(2\pi)^{ \dim(E^j)/2}} e^{-\frac{1}{2} |V_j x|^2 }\right)^{d_j} \times  \prod_{\ell=1}^{s^-(Q)} \phi^{\mu_{\ell}}(v_{\ell} x), \hspace{1cm}\forall x\in H, \notag
\end{align}
where $\phi$ denotes the standard gaussian density
$$
\phi(z) := \frac{1}{2\pi}e^{-\frac{1}{2}| z|^2 }, ~~~z\in \R.
$$
Since $\int_{\R} \phi  = 1$, the inequality \eqref{gaussKernIneq} follows from an application of  Corollary \ref{cor:geometric2}.
\end{proof}

An important consequence of Theorem \ref{thm:gaussianGeometricFRBL} is the following geometric inverse Brascamp-Lieb inequality proved by Barthe and Wolff \cite[Theorem 4.7]{barthe2018positive}, recovered here in full generality.  We remark that the reverse H\"older-type inequality for gaussian random vectors due to Chen, Dafnis and Paouris \cite[Theorem 1(ii)]{chen2015improved} follows as a direct consequence  \cite[Section 4.3]{barthe2018positive}, so should be considered as yet another example. The direct Chen-Dafnis-Paouris inequality \cite[Theorem 1(i)]{chen2015improved} is a consequence of the forward Brascamp-Lieb inequality.
\begin{corollary}
Let $H$ be a Euclidean space, and consider linear maps $U_i : H\to E_i$ and $V_j : H \to E^j$  satisfying   $U_i U_i^* = \operatorname{id}_{E_i}$ and  $V_j V_j^* = \operatorname{id}_{E^j}$, for all $1\leq i \leq k$ and $1\leq j\leq m$.   Let $(c_i)_{1\leq i\leq k}$ and $(d_j)_{1\leq j\leq m}$ be positive numbers, and suppose that
\begin{align*}
Q + \sum_{i=1}^k c_i U_i^* U_i =  \sum_{j=1}^m d_j V_j^* V_j + \operatorname{id}_H, \mbox{~~~and~~~}\dim(H) \geq s^+(Q) + \sum_{i=1}^k \dim(E_i) 
\end{align*}
for $Q: H \to H$  a symmetric operator.
For all non-negative measurable functions $f_i : E_i \to \R^+$,  $1\leq i \leq k$ and $g_j : E^j \to \R^+$,  $1\leq j\leq m$, it holds that
\begin{align*}
\prod_{i=1}^k \left(\int_{E_i} f_i   \right)^{c_i} \prod_{j=1}^m \left(\int_{E^j} g_j   \right)^{-d_j} 
\leq \int_{H} e^{-\pi \langle Q x,x\rangle }  \prod_{i=1}^k f^{c_i}_i(U_i x)   \prod_{j=1}^m g^{-d_j}_j(V_j x)   dx.
\end{align*}
\end{corollary}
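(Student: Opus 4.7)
The plan is to derive this from Corollary \ref{cor:geometric2} applied to an augmented geometric datum obtained by spectrally splitting $Q$ into positive and negative parts and by appending $\operatorname{id}_H$ to the $V$-side as an extra slot whose integrand will carry the desired gaussian kernel.  Structurally this mirrors the proof of Theorem \ref{thm:gaussianGeometricFRBL}; the only novelty is the extra map $V_{m+1}=\operatorname{id}_H$, which plays a role analogous to the augmenting slot used in Section \ref{subsec:BartheWolff} to derive Theorem \ref{thm:BartheWolff} from Theorem \ref{thm:FRBLBartheWolff}.

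First, decompose $Q=Q^+-Q^-$ with $Q^\pm\geq 0$, and spectrally write $Q^+=\sum_{\ell=1}^{s^+(Q)}\lambda_\ell u_\ell^* u_\ell$ and $Q^-=\sum_{\ell=1}^{s^-(Q)}\mu_\ell v_\ell^* v_\ell$, with $\lambda_\ell,\mu_\ell>0$ and each $u_\ell,v_\ell\colon H\to\R$ a unit vector.  Substituting into the hypothesis gives the augmented frame identity
\begin{align}
\sum_{i=1}^k c_i U_i^*U_i+\sum_{\ell=1}^{s^+(Q)}\lambda_\ell u_\ell^* u_\ell \;=\; \sum_{j=1}^m d_j V_j^*V_j+\operatorname{id}_H+\sum_{\ell=1}^{s^-(Q)}\mu_\ell v_\ell^* v_\ell. \notag
\end{align}
The right-hand side dominates $\operatorname{id}_H>0$, hence both sides are strictly positive-definite and so have rank $\dim(H)$.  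Combined with the hypothesis $\dim(H)\geq s^+(Q)+\sum_i\dim(E_i)$ and subadditivity of rank, this forces the equality $\dim(H)=s^+(Q)+\sum_i\dim(E_i)$, exactly as in the proof of Theorem \ref{thm:gaussianGeometricFRBL}.

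Now form the augmented datum with $U$-side $\{U_i\}_{i\leq k}\cup\{u_\ell\}_{\ell\leq s^+(Q)}$ (coefficients $\{c_i\}\cup\{\lambda_\ell\}$) and $V$-side $\{V_j\}_{j\leq m}\cup\{V_{m+1}{:=}\operatorname{id}_H\}\cup\{v_\ell\}_{\ell\leq s^-(Q)}$ (coefficients $\{d_j\}\cup\{1\}\cup\{\mu_\ell\}$).  All isometry constraints hold by construction, the displayed identity is exactly the frame condition of Corollary \ref{cor:geometric2}, and positive-definiteness of its left-hand side combined with the dimension equality yields the bijectivity hypothesis of Corollary \ref{cor:geometric2} via Remark \ref{rmk:equivSurj}.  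Into this augmented datum plug the given $f_i,g_j$, the standard gaussian density $\phi(t):=e^{-\pi t^2}$ into every one-dimensional $\R$-slot (so that $\prod_\ell\phi^{\lambda_\ell}(u_\ell x)=e^{-\pi\langle Q^+x,x\rangle}$ and $\prod_\ell\phi^{\mu_\ell}(v_\ell x)=e^{-\pi\langle Q^-x,x\rangle}$), and into the $V_{m+1}$-slot the function
\begin{align}
g_{m+1}(x):=e^{-\pi\langle Qx,x\rangle}\prod_{i=1}^k f_i^{c_i}(U_i x)\prod_{j=1}^m g_j^{-d_j}(V_j x), \notag
\end{align}
chosen precisely so that the pointwise hypothesis \eqref{FRBLhypGeom2} holds with equality.

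Applying Corollary \ref{cor:geometric2} and using $\int_\R e^{-\pi t^2}\,dt=1$ (so that all $\phi$-factors on both sides contribute unity), we arrive at
\begin{align}
\prod_{i=1}^k\left(\int_{E_i} f_i\right)^{c_i} \;\leq\; \prod_{j=1}^m\left(\int_{E^j} g_j\right)^{d_j}\int_H g_{m+1}(x)\,dx, \notag
\end{align}
which upon substituting the formula for $g_{m+1}$ and dividing by $\prod_j(\int g_j)^{d_j}$ is the claimed inequality.  The main technical subtlety is the dimension-forcing argument that underwrites the bijectivity hypothesis of Corollary \ref{cor:geometric2}; this is the same argument already used in the proof of Theorem \ref{thm:gaussianGeometricFRBL} and is the only place the assumption $\dim(H)\geq s^+(Q)+\sum_i\dim(E_i)$ enters.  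Possible vanishing of some $g_j$ is handled by the routine device of replacing $g_j$ with $g_j+\varepsilon$ on a bounded set and passing to the limit.
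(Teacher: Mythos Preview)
Your proof is correct and follows essentially the same approach as the paper's.  The only difference is organizational: the paper first packages the spectral-decomposition argument into Theorem \ref{thm:gaussianGeometricFRBL} (a gaussian-measure version, reached by rescaling functions via $x\mapsto (2\pi)^{-1/2}x$) and then applies that theorem to the augmented datum with $V_{m+1}=\operatorname{id}_H$, whereas you inline the spectral decomposition and go straight to Corollary \ref{cor:geometric2} with $\phi(t)=e^{-\pi t^2}$, bypassing the change-of-variables step.
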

\begin{proof}
Define $E^{m+1}=H$, $V_{m+1} = \operatorname{id}_H$ and $d_{m+1}=1$.  Put
$$
\tilde{f}_i(x) := f_i\left((2\pi)^{-1/2}x\right)e^{\frac{1}{2}|x|^2}, 1\leq i\leq k; ~~\tilde{g}_j(x) := g_j\left((2\pi)^{-1/2}x\right)e^{\frac{1}{2}|x|^2},  1\leq j\leq m
$$
and define $\tilde{g}_{m+1} : H\to H$ defined according to 
\begin{align*}
\tilde{g}_{m+1} (x)  &:=    \prod_{i=1}^k \tilde{f}^{c_i}_i((2\pi)^{-1/2} U_i x)   \prod_{j=1}^m \tilde{g}^{-d_j}_j((2\pi)^{-1/2}V_j x) \\
&= \left( \prod_{i=1}^k  {f}^{c_i}_i((2\pi)^{-1/2} U_i x)   \prod_{j=1}^m  {g}^{-d_j}_j((2\pi)^{-1/2} V_j x) \right)  \exp\left(\frac{1}{2}|x|^2 - \frac{1}{2}\langle Q x,x\rangle\right).
 \end{align*} 
Now, by change of variables $u\leftarrow (2\pi)^{-1/2}x$, 
$$
\int_{E_i} \tilde f_i(x) d\gamma_{E_i}(x)  = \int_{E_i} f_i(u) du, ~1\leq i\leq k; ~~\int_{E^j} \tilde g_j(x) d\gamma_{E^j}(x) = \int_{E^j}  g_j(u) du,  ~1\leq j\leq m
$$
and
$$
\int_{H} \tilde g_{m+1}(x) d\gamma_{H}(x) = \int_{H} e^{-\pi \langle Q u,u\rangle }  \prod_{i=1}^k f^{c_i}_i(U_i u)   \prod_{j=1}^m g^{-d_j}_j(V_j u)   du.
$$
So, the claim follows from  Theorem  \ref{thm:gaussianGeometricFRBL}.
\end{proof}

\begin{remark}
We have seen above that the geometric Barthe-Wolff inequality  \cite[Theorem 4.7]{barthe2018positive} follows as a consequence of Corollary \ref{cor:geometric2}, the latter being a special case of the complete characterization of geometric instances of the Forward-Reverse Brascamp-Lieb inequality given in Corollary \ref{cor:geometric}.  So, it appears \emph{prima facie} that the class of geometric Forward-Reverse Brascamp-Lieb inequalities is more extensive than the geometric instances of the Barthe-Wolff inequality. 
\end{remark}

\subsection{The Anantharam-Jog-Nair inequality}\label{subsec:AJN}

In a recent paper \cite{anantharam2019unifying}, Anantharam, Jog and Nair  established the following entropic inequality:
\begin{theorem}\label{thm:AJN}
Consider independent random vectors $(Z_i)_{1\leq i \leq k}$ taking values in $(E_i)_{1\leq i\leq k}$,  respectively, each having density with respect to Lebesgue measure,  finite entropies, and finite second moments.    It holds that  
\begin{align}
\sum_{i=1}^kc_i  h\left( Z_i \right)  - \sum_{j=1}^m d_j  h\left(\sum_{i=1}^k B_{ij}  Z_i \right)  \leq M_g(\mathbf{c},\mathbf{d},\mathbf{B}), \label{AJNineq}
\end{align}
where the constant $M_g(\mathbf{c},\mathbf{d},\mathbf{B})$ is defined as the supremum of the LHS, taken over independent gaussian  $(Z_i)_{1\leq i \leq k}$.  Moreover,   the quantity $M_g(\mathbf{c},\mathbf{d},\mathbf{B})$ is finite if and only if we have the scaling condition
\begin{align}
\sum_{i=1}^k c_i \dim(E_i) = \sum_{j=1}^m d_j \dim(E^j) \label{CONDITIONscaling2}
\end{align}
and the dimension condition
\begin{align}
\sum_{i=1}^k c_i \dim(\pi_{E_i}T) \leq  \sum_{j=1}^m d_j \dim(B_j T) ~~\mbox{for all product-form subspaces $T\subseteq E_0$.} \label{CONDITIONdimension2}
\end{align}
\end{theorem}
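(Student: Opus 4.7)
The plan is twofold: establish the finiteness conditions by adapting the decomposition machinery of Section \ref{sec:Decomposability}, and prove the inequality \eqref{AJNineq} by reduction to the general-random-variable entropic form of FRBL (Theorem \ref{thm:entDualGeneral}) applied to a suitably augmented datum.

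For the finiteness conditions, the arguments from Section \ref{sec:Decomposability} transfer with only minor modifications. Necessity of \eqref{CONDITIONscaling2} follows from the invariance of the AJN functional under $Z_i\mapsto\lambda Z_i$ combined with the scaling property of differential entropy (Proposition \ref{app:scaling}), exactly as in Proposition \ref{prop:necessaryCondtions}. Necessity of \eqref{CONDITIONdimension2} is obtained by testing on independent gaussians whose covariances are supported (to within $\epsilon$) on $\pi_{E_i}T$ and sending the orthogonal variance to zero; this parallels the $\log(\epsilon^{-1})$ computation in the proof of Lemma \ref{lem:decomposabilityOfD} but without the cancellation exploited there for critical subspaces. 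Sufficiency is essentially already in Proposition \ref{prop:sufficientCondtions}: inspection of that proof shows the telescoping upper bound \eqref{tempZineq} is produced by first lower-bounding the coupling maximum by the product value via \eqref{eq:boundMaxEntropyByIndep}, so the same estimate bounds $M_g(\mathbf{c},\mathbf{d},\mathbf{B})$ from above directly.

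For the inequality itself, the central obstacle is that the entropic form of FRBL features a supremum over couplings of $(X_i)$, whereas AJN concerns only the independent (product) coupling. My strategy is to augment the datum so as to pin the coupling supremum at independence. Specifically, append an additional output $E^{m+1}:=E_0$ with maps $B_{i,m+1}:=\pi_{E_i}^*$ and weight $\alpha>0$, and simultaneously shift input exponents $c_i\mapsto c_i+\alpha$. One readily checks that the consistency condition \eqref{eq:consistency} is preserved automatically for every $\alpha\geq 0$. The entropic FRBL applied to this augmentation introduces a term $\alpha\,h(X_{\mathrm{joint}})$ into the coupling-supremum, and since $h(X_{\mathrm{joint}})\leq\sum_i h(X_i)$ with equality precisely at independence (subadditivity, Proposition \ref{app:Subadditive}), for every $\alpha>0$ the supremum is attained at the product coupling. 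Evaluating the resulting bound at independent $(Z_i)$, using the scaling relation $h((c_i+\alpha)^{-1}Z_i)=h(Z_i)-\dim(E_i)\log(c_i+\alpha)$, and tracking the log-determinant shifts should yield \eqref{AJNineq} with the constant matching $M_g(\mathbf{c},\mathbf{d},\mathbf{B})$ after optimizing on the gaussian side via \eqref{defDg}.

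The main technical obstacle I anticipate is identifying the augmented-datum constant $D_g^{\mathrm{aug}}(\alpha)$, together with the additive $-\sum(c_i+\alpha)\dim(E_i)\log(c_i+\alpha)$ correction, with the target $M_g(\mathbf{c},\mathbf{d},\mathbf{B})$ in the limit. This is a determinant-level calculation against the augmented operator inequality \eqref{operatorInequality}; the key step is verifying that the gaussian extremizers of the augmented datum correspond exactly to product-form couplings $\Pi=\bigoplus_i V_i^{-1}$ in Theorem \ref{thm:FRBLgaussianExtiff}, which in turn are precisely the extremizers of the unconstrained AJN supremum over independent gaussian covariances. Proposition \ref{prop:EquivOptimalConds} should be central in making this identification precise, with the large-$\alpha$ limit picking out exactly those FRBL extremizers satisfying the product-form constraint on $\Pi$.
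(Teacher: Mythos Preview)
Your overall strategy---augment the datum with an extra output $E^{m+1}=E_0$ carrying weight $\alpha$, shift $c_i\mapsto c_i+\alpha$, and send $\alpha\to\infty$---is exactly the paper's, and your treatment of the finiteness conditions is correct and matches the paper.

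There is, however, a genuine error in your handling of the inequality. You assert that ``for every $\alpha>0$ the supremum is attained at the product coupling'' because subadditivity makes $\alpha\,h(X_{\mathrm{joint}})$ largest at independence. But this only controls one term: for finite $\alpha$ the remaining sum $\sum_j d_j h(\sum_i B_{ij}\tilde Z_i)$ may well prefer a correlated coupling, so the argmax of the full objective need not be the product. Your subsequent appeal to Proposition~\ref{prop:EquivOptimalConds} and product-form $\Pi$ inherits the same flaw: nothing forces $\Pi=\bigoplus_i V_i^{-1}$ at any finite $\alpha$.

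What the paper does instead is a genuine limiting argument. Writing $\alpha\,h(\tilde{\mathbf{Z}})=\alpha\sum_i h(Z_i)-\alpha\,D(P_{\tilde{\mathbf{Z}}}\|\prod_i P_{Z_i})$, the augmented coupling supremum becomes
\[
\sup_{P_{\tilde{\mathbf{Z}}}}\Big(\textstyle\sum_j d_j h\big(\sum_i B_{ij}\tilde Z_i\big)-\alpha\,D(P_{\tilde{\mathbf{Z}}}\|\prod_i P_{Z_i})\Big).
\]
On the gaussian side, plugging in the independent coupling gives one inequality ($D_g^{\mathrm{aug}}(\alpha)+\text{correction}\le M_g$ for all $\alpha$). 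On the general side, one takes near-optimal couplings as $\alpha\to\infty$; the uniform upper bound on the entropy terms (from second moments) forces $\alpha\,D\to 0$, hence weak convergence to the product, and weak upper semicontinuity of entropy (Proposition~\ref{app:WeakUSC}) bounds the limsup by the independent value. Two further technicalities you omit: Theorem~\ref{thm:entDualGeneral} requires compact support, so a truncation step is needed; and Proposition~\ref{prop:EquivOptimalConds} plays no role in the paper's proof of this theorem.
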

The inequality \eqref{AJNineq} is of interest because it simultaneously expresses both the entropy power inequality and the (entropic formulation \cite{carlen2009subadditivity} of) the Brascamp-Lieb inequality.  The former is generally considered a consequence of the latter, obtained by considering a limiting case of parameters.  As such, an inequality encompassing both simultaneously was previously not known.   Analogously, it turns out that Theorem \ref{thm:AJN} can  be derived as a corollary of Theorem \ref{thm:FRBL} by considering a limiting case of parameters. 

To give the argument, we first state an entropic characterization of $D(\mathbf{c},\mathbf{d},\mathbf{B})$, which directly parallels Proposition \ref{prop:entropyCharacterizationDg} (here, the reader is reminded of the notation introduced in Section \ref{subsec:EntropyDg}).   Specifically, the following entropic characterization of $D(\mathbf{c},\mathbf{d},\mathbf{B})$ is a special case of \cite[Theorem 1]{liu2018forward}, which generalizes to abstract settings and extends the entropic formulation of the forward Brascamp-Lieb inequality  due to Carlen and Cordero-Erasquin  \cite{carlen2009subadditivity}, as well as the entropic formulation of the reverse Brascamp-Lieb inequality independently put forth in \cite{liu2016brascamp} and \cite{beigi2016equivalent} (the latter being specific to discrete spaces).   
\begin{theorem}\label{thm:entDualGeneral}
If $(Z_i)_{1\leq i \leq k}$ are compactly supported random vectors in $(E_i)_{1\leq i\leq k}$,  respectively, each having density with respect to Lebesgue measure and finite entropies,  then 
\begin{align}
\sum_{i=1}^kc_i  h\left( c_i^{-1} Z_i \right)  - \max_{P_{\mathbf{Z}}\in \Pi(P_{Z_1}, \dots, P_{Z_k})}  \sum_{j=1}^m d_j h\left(\sum_{i=1}^k B_{ij} Z_i \right) \leq D(\mathbf{c},\mathbf{d},\mathbf{B}),\label{compactEntropyD}
\end{align}
where the (always attained) maximum is over all couplings of the $(Z_i)_{1\leq i \leq k}$.  Moreover, the constant $D(\mathbf{c},\mathbf{d},\mathbf{B})$ is best possible.  
\end{theorem}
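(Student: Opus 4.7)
My plan is to identify the best constant in the entropic formulation \eqref{compactEntropyD} with the functional constant $D(\mathbf{c},\mathbf{d},\mathbf{B})$ by a Fenchel-type duality exchanging log-integrals of exponentials and differential entropies. The core tool is the Gibbs/Donsker--Varadhan variational identity: for any measurable $\phi$ with $\int e^\phi < \infty$,
\[
\log\!\int e^\phi\,dx \;=\; \sup_\nu \Bigl\{ \int \phi\,d\nu + h(\nu) \Bigr\},
\]
the supremum running over probability densities $\nu$ and attained at $\nu\propto e^\phi$. This is the same duality that Carlen--Cordero-Erausquin used in the forward case and that \cite{liu2016brascamp,beigi2016equivalent} used in the reverse case; the new feature in the forward-reverse setting is the additional maximization over couplings of $(Z_i)$.

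First I would show that if \eqref{compactEntropyD} holds with any constant $D^\ast$ in place of $D(\mathbf{c},\mathbf{d},\mathbf{B})$, then $D^\ast \ge D(\mathbf{c},\mathbf{d},\mathbf{B})$, establishing the ``best possible'' claim. Given $(f_i,g_j)$ satisfying \eqref{FRBLhypIntro} with $0<\int f_i,\int g_j<\infty$, let $Z_i$ be chosen so that $c_i^{-1}Z_i$ has density $\tilde p_i:=f_i/\int f_i$; then $\EE[\log f_i(c_i^{-1}Z_i)]=\log\!\int f_i - h(c_i^{-1}Z_i)$. For an arbitrary coupling $P$ of $(Z_i)$, taking logs and $P$-expectations of \eqref{FRBLhypIntro} gives $\sum_i c_i\EE[\log f_i(c_i^{-1}Z_i)]\le \sum_j d_j\EE_P[\log g_j(Y_j)]$ with $Y_j=\sum_i B_{ij}Z_i$, and the Gibbs inequality bounds $\EE_P[\log g_j(Y_j)]\le \log\!\int g_j - h_P(Y_j)$. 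Rearranging,
\[
\sum_{i=1}^k c_i\log\!\int f_i - \sum_{j=1}^m d_j\log\!\int g_j \;\le\; \sum_{i=1}^k c_i h(c_i^{-1}Z_i) - \sum_{j=1}^m d_j h_P(Y_j)
\]
for every coupling $P$. Taking the infimum on the right---which is precisely the LHS of \eqref{compactEntropyD}---and invoking the hypothesized entropic inequality produces \eqref{FRBLintro} with constant $D^\ast$, forcing $D^\ast\ge D(\mathbf{c},\mathbf{d},\mathbf{B})$.

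The harder direction is showing \eqref{compactEntropyD} itself with constant $D(\mathbf{c},\mathbf{d},\mathbf{B})$. Fix compactly supported $(Z_i)$ with densities $p_i$ and finite entropies. I would proceed as follows: (i) establish existence of a coupling $P^\ast$ attaining $\max_P\sum_j d_j h_P(Y_j)$, using weak upper semicontinuity of entropy (Proposition \ref{app:WeakUSC}) together with tightness of couplings supported in the compact set $\prod_i \mathrm{supp}(p_i)$; (ii) observe that \eqref{FRBLintro} admits the ``soft'' form
\begin{equation*}
\sum_{i=1}^k c_i\log\!\int e^{\phi_i} - \sum_{j=1}^m d_j\log\!\int e^{\psi_j} \le D(\mathbf{c},\mathbf{d},\mathbf{B}) + \sup_{(x_i)}\Bigl[\sum_i c_i\phi_i(x_i) - \sum_j d_j\psi_j\bigl(\textstyle\sum_i c_iB_{ij}x_i\bigr)\Bigr],
\end{equation*}
obtained by absorbing the sup into a common constant shift of the $\psi_j$'s; (iii) apply this to truncated smooth approximations of $\phi_i=\log\tilde p_i$ and $\psi_j=\log q_j^\ast$ (where $q_j^\ast$ is the density of $Y_j$ under $P^\ast$); and (iv) pass to the limit in the truncation parameter, using compact support to control integrability and justify the interchange.

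The principal obstacle is step (iv): at $P^\ast$ the pointwise supremum in the soft form must collapse to the $P^\ast$-expectation, so that rearranging produces exactly the entropic bound. This is the density-valued analogue of the strong min-max identity established for gaussians in Theorem \ref{thm:FRdualQuadraticForms}, and I expect it to follow from a Fenchel--Rockafellar argument applied to the convex problem $\min_{(g_j)}\sum_j d_j\log\!\int g_j$ subject to $\prod_i\tilde p_i^{c_i}(x_i)\le\prod_j g_j^{d_j}(\sum_i c_iB_{ij}x_i)$, whose dual variable turns out to be the optimizing coupling; concretely, first-order optimality of $P^\ast$ yields a Monge--Kantorovich-type additive decomposition of $\sum_j d_j\log q_j^\ast(\sum_i c_iB_{ij}x_i)$ on the support of $P^\ast$, which forces the integrand of the bracket above to be constant there. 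Once this strong duality is in hand, dropping the compact-support assumption is routine by truncation and monotone convergence.
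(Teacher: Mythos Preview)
Your approach is essentially the paper's: both rely on Fenchel--Rockafellar duality to pass between the functional and entropic formulations, in direct analogy with the Gaussian case of Theorem~\ref{thm:FRdualQuadraticForms}. The paper's sketch is more direct---apply Fenchel--Rockafellar on $X=C_c(E_0)$ exactly as in the proof of Theorem~\ref{thm:equivFormsDg}, with continuous test functions in place of quadratic forms---whereas you first locate the optimal coupling $P^\ast$ and then try to exhibit a dual certificate for it. Your easy direction is correct and is the standard Gibbs/Donsker--Varadhan computation.

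The one caution concerns your step~(iv). The Fenchel--Rockafellar theorem guarantees attainment only on the \emph{max} side, which here is the coupling side; it does not promise that optimal test functions $(\psi_j)$ exist. Consequently, your hoped-for ``Monge--Kantorovich-type additive decomposition''---that $\sum_i c_i\log\tilde p_i(x_i)-\sum_j d_j\log q_j^\ast\bigl(\sum_i c_iB_{ij}x_i\bigr)$ is constant on $\mathrm{supp}\,P^\ast$---is stronger than what the duality yields, and need not hold. Fortunately it is also unnecessary: the min-max identity from Fenchel--Rockafellar (the functional analogue of \eqref{FenchelMaxCoupling}) already delivers the entropic bound directly, by the same chain of estimates used in the proof of Theorem~\ref{thm:equivFormsDg}, without ever naming optimal $\psi_j$. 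So collapse steps (i)--(iv) into a single application of Fenchel--Rockafellar on $C_c(E_0)$ and you recover precisely the argument the paper intends.
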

Theorem \ref{thm:entDualGeneral} is proved similarly to Theorem \ref{thm:equivFormsDg}, except that the Fenchel-Rockafellar theorem is applied to the topological vector space $X=C_c(E_0)$, and equivalence is shown to the functional formulation of $D(\mathbf{c},\mathbf{d},\mathbf{B})$.  Readers can fill in the details as an exercise, or refer to the proof of the more general \cite[Theorem 1]{liu2018forward}. Despite the  similarity of statements and proof strategies, it is not immediate to derive Proposition \ref{prop:entropyCharacterizationDg} as a special case due to several subtle technical issues that need to be dealt with.

The following proof  provides a nice example of where the entropic characterization of $D(\mathbf{c},\mathbf{d},\mathbf{B})$  can be useful.

\begin{proof}[Proof of Theorem \ref{thm:AJN}] 
First, we note that  \eqref{CONDITIONscaling2} and \eqref{CONDITIONdimension2} are necessary conditions for finiteness, which can be checked by testing on gaussian $(Z_i)_{1\leq i\leq k}$ which put different variances in directions $\pi_{E_i}T$ and $(\pi_{E_i}T)^{\perp}$. So, we assume henceforth that \eqref{CONDITIONscaling2} and \eqref{CONDITIONdimension2} hold.

Let $M(\mathbf{c},\mathbf{d},\mathbf{B})$ denote the supremum of the LHS of \eqref{AJNineq} over all independent $(Z_i)_{1\leq i \leq k}$ with finite entropies and finite second moments.  Note that in taking this supremum, it suffices to consider compactly supported $(Z_i)_{1\leq i \leq k}$.  Indeed, if $(Z_i)_{1\leq i \leq k}$ are not compactly supported and have finite second moments, then letting $Z_{i,R}$ be the restriction of $Z_i$ to the ball of radius $R$, we have $\lim_{R\to\infty} h(Z_{i,R}) = h(Z_{i})$ by dominated convergence, and $\limsup_{R\to\infty} h\left(\sum_{i=1}^k B_{ij} Z_{i,R} \right) \leq  h\left(\sum_{i=1}^k B_{ij} Z_{i} \right)$ by weak upper semicontinuity of Shannon entropy under a second moment constraint.    %

We will consider an application of the forward-reverse Brascamp-Lieb inequality with modified coefficients $c'_i = (c_i+t)$, $1\leq i\leq k$, $d'_j = d_j$, $1\leq j\leq m$ and augmented datum having $E^{m+1}:=E_0$, $B_{m+1}:=\operatorname{id}_{E_0}$ and $d'_{m+1}:=t$, where $t$ is a parameter that will tend to $+\infty$.  Denote this augmented datum by $(\mathbf{c}+t, (\mathbf{d}, t), \mathbf{B}\cup\{B_{m+1}\})$.  %

Considering  independent gaussian $(Z_i)_{1\leq i \leq k}$, let $D(P\|Q) := \int dP \log \left(\frac{dP}{dQ}\right)\geq 0$ denote the relative entropy between probability measures $P$ and $Q$ satisfying $P\ll Q$, and for any  $t \geq 0$ observe
\begin{align*}
&\sum_{i=1}^k(c_i  +t) h\left( Z_i \right)  - \sup_{P_{\tilde{\mathbf{Z}}}\in \Pi(P_{Z_1},\dots, P_{Z_k}) }\left( \sum_{j=1}^m d_j h\left(\sum_{i=1}^k B_{ij} \tilde Z_i \right)  + t h\left( \tilde{Z}_1, \dots, \tilde Z_k \right) \right) \\
&=\sum_{i=1}^k c_i   h\left( Z_i \right)  - \sup_{P_{\tilde{\mathbf{Z}}}\in \Pi(P_{Z_1},\dots, P_{Z_k}) }\left( \sum_{j=1}^m d_j h\left(\sum_{i=1}^k B_{ij} \tilde Z_i \right)  - t D\left( P_{\tilde{\mathbf{Z}}}\| P_{Z_1}\times \cdots\times P_{Z_k}  \right)   \right) \\
&\leq \sum_{i=1}^kc_i  h\left( Z_i \right)  - \sum_{j=1}^m d_j h\left(\sum_{i=1}^k B_{ij}  Z_i \right) ,
\end{align*}
where the equality follows by definition of relative entropy, and the inequality follows by considering the independent coupling as one element in the set the supremum is taken over.   By definitions and Proposition \ref{prop:entropyCharacterizationDg}, we conclude 
\begin{align}
D_g(\mathbf{c}+t, (\mathbf{d}, t), \mathbf{B}\cup\{B_{m+1}\}) + \sum_{i=1}^k (c_i+t)\dim(E_i) \log (c_i+t) \leq M_g(\mathbf{c}, \mathbf{d}, \mathbf{B})~~\mbox{for all $t\geq 0$.}\label{DgMgupperBound} 
\end{align}
As for finiteness of $M_g(\mathbf{c},\mathbf{d},\mathbf{B})$, this was already taken care of in the proof of Proposition \ref{prop:sufficientCondtions}.  Indeed, using the  the relaxation \eqref{eq:boundMaxEntropyByIndep}, we established finiteness of $D_g(\mathbf{c},\mathbf{d},\mathbf{B})$ by showing
\begin{align*}
\sum_{i=1}^k c_i   h\left( Z_i \right)  -  \sup_{P_{\tilde{\mathbf{Z}}}\in \Pi(P_{Z_1},\dots, P_{Z_k}) } \sum_{j=1}^m d_j h\left(\sum_{i=1}^k B_{ij}   \tilde Z_i \right)
\leq \sum_{i=1}^k c_i   h\left( Z_i \right)  -  \sum_{j=1}^m d_j h\left(\sum_{i=1}^k B_{ij}   Z_i \right) \leq C <+\infty
\end{align*}
for independent gaussian $(Z_i)_{1\leq i \leq k}$ when  \eqref{CONDITIONscaling2} and \eqref{CONDITIONdimension2} hold.

Now, fix arbitrary $(Z_i)_{1\leq i \leq k}$ with compact support and  finite entropies, and  for each $n\geq 0$, consider a coupling $(Z^{(n)}_1, \dots, Z^{(n)}_k)\sim P_{\mathbf{Z}^{(n)}} \in  \Pi(P_{Z_1},\dots, P_{Z_k}) $ satisfying 
\begin{align*}
&  \sum_{j=1}^m d_j h\left(\sum_{i=1}^k B_{ij}  Z^{(n)}_i \right)  - n D\left( P_{\mathbf{Z}^{(n)}}  \| P_{Z_1}\times \cdots\times P_{Z_k}  \right)  
\\
&\geq 
  \sup_{P_{\tilde{\mathbf{Z}}}\in \Pi(P_{Z_1},\dots, P_{Z_k}) }\left( \sum_{j=1}^m d_j h\left( \sum_{i=1}^k B_{ij} \tilde Z_i \right)  - n D\left( P_{\tilde{\mathbf{Z}}}\| P_{Z_1}\times \cdots\times P_{Z_k}  \right)    \right) - \frac{1}{n}.
\end{align*}
Since the RHS is bounded from below by selecting the independent coupling and the entropies $h\left( \sum_{i=1}^k B_{ij}  Z^{(n)}_i \right)$ can be uniformly bounded from above in terms of the second moments of the marginals $(Z_i)_{1\leq i \leq k}$, it is clear that we must have 
$$
\lim_{n\to\infty} n D\left( P_{\mathbf{Z}^{(n)}}  \| P_{Z_1}\times \cdots\times P_{Z_k}  \right)=0.
$$
In particular, by  weak upper semicontinuity of Shannon entropy under second moment constraint, we conclude
\begin{align*} 
\limsup_{n\to\infty}   \sup_{P_{\tilde{\mathbf{Z}}}\in \Pi(P_{Z_1},\dots, P_{Z_k}) }\left( \sum_{j=1}^m d_j h\left( \sum_{i=1}^k B_{ij} \tilde Z_i \right)  - n D\left( P_{\tilde{\mathbf{Z}}}\| P_{Z_1}\times \cdots\times P_{Z_k}  \right)    \right) \leq \sum_{j=1}^m d_j h\left(\sum_{i=1}^k B_{ij}  Z_i \right).
\end{align*}
Combining definitions with the above, Theorem \ref{thm:entDualGeneral}, and Theorem \ref{thm:FRBL}, we have
\begin{align*}
&\liminf_{n\to\infty} \left( D_g(\mathbf{c}+n, (\mathbf{d}, n), \mathbf{B}\cup\{B_{m+1}\}) + \sum_{i=1}^k (c_i+n)\dim(E_i) \log (c_i+n)  \right) \\
&\geq \sum_{i=1}^k c_i   h\left( Z_i \right)  - \limsup_{n\to\infty}  \sup_{P_{\tilde{\mathbf{Z}}}\in \Pi(P_{Z_1},\dots, P_{Z_k}) }\left( \sum_{j=1}^m d_j h\left(\sum_{i=1}^k B_{ij} \tilde Z_i \right)  - n D\left( P_{\tilde{\mathbf{Z}}}\| P_{Z_1}\times \cdots\times P_{Z_k}  \right)   \right) \notag\\
&\geq \sum_{i=1}^k c_i   h\left( Z_i \right)  -  \sum_{j=1}^m d_j h\left(\sum_{i=1}^k B_{ij}   Z_i \right).
\end{align*}
Since the  $(Z_i)_{1\leq i \leq k}$ were arbitrary, it follows from \eqref{DgMgupperBound} and the subsequent remarks that    
$$M(\mathbf{c},\mathbf{d},\mathbf{B})\leq M_g(\mathbf{c},\mathbf{d},\mathbf{B}) <+\infty,$$
completing the proof.
\end{proof}

\appendix
\section{Definition and basic properties of Shannon entropy}\label{app:Entropy}
This appendix provides an overview of the basic definitions and properties of Shannon entropy, with the goal of allowing an unfamiliar reader to follow the entropy computations in the body of this manuscript.  In the interest of keeping things brief and self-contained, we focus on the case of random vectors in a Euclidean space with finite second moments, since this is sufficient for our purposes.  The interested reader can find a  general treatment in   \cite[Chapter 5]{GrayIT}.

Let $E$ be a Euclidean space, and assume $X$ is a random vector taking values in $E$, having  density $f_X : E\to [0,+\infty)$ with respect to Lebesgue measure.  The Shannon entropy (referred to henceforth as simply \emph{entropy}) associated to $X$ is defined as
$$
 h(X) := -\int_E f_X(x) \log f_X(x) dx.
$$
The entropy is said to exist if the integral is well-defined in the Lebesgue sense.  Here, we adopt the convention that $0\cdot \log 0 = 0$.   The reader should note that the entropy $h(X)$ is a functional of the density $f_X$, and is not a function of the realization of the random vector $X$.    If $X$ does not have density with respect to Lebesgue measure, we adopt the convention that $h(X)=-\infty$. 

By a simple change of variables, we have the following elementary property of entropy:
\begin{proposition}[Scaling property  for Shannon entropy]\label{app:scaling}
If  the entropy of $X$ exists and $A: E \to E$ is an invertible linear transformation, then we have the scaling property
$$
h(A X) = \log |\det A| + h(X).
$$
\end{proposition}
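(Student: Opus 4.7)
The plan is to reduce to a direct computation using the change-of-variables formula for densities. Let $f_X$ denote the density of $X$ with respect to Lebesgue measure on $E$. Since $A$ is invertible, the random vector $Y := AX$ also has a density $f_Y$, and the standard change-of-variables formula gives
\[
f_Y(y) = \frac{f_X(A^{-1}y)}{|\det A|}, \qquad y \in E.
\]

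With this formula in hand, I would substitute into the definition of entropy and change variables $y = Ax$ (so $dy = |\det A|\,dx$):
\[
h(AX) = -\int_E f_Y(y)\log f_Y(y)\,dy
= -\int_E \frac{f_X(A^{-1}y)}{|\det A|}\log\!\frac{f_X(A^{-1}y)}{|\det A|}\,dy
= -\int_E f_X(x)\log\!\frac{f_X(x)}{|\det A|}\,dx.
\]
Splitting the logarithm and using $\int f_X = 1$, the right-hand side becomes $h(X) + \log|\det A|$, which yields the claim.

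The only points requiring a brief justification are (i) well-definedness of the two integrals, which follows since $h(X)$ is assumed to exist, and the change of variables is an affine bijection so $f_X\log f_X$ and $f_Y\log f_Y$ are related by an exact Jacobian; and (ii) legitimacy of splitting the integral $\int f_X(x)[\log f_X(x) - \log|\det A|]\,dx$ into two pieces, which is immediate because the constant $\log|\det A|$ integrates against the probability density $f_X$ to give exactly $\log|\det A|$. There is no genuine obstacle here; the statement is essentially a one-line consequence of the Jacobian formula, and the entire proof fits in a short display.
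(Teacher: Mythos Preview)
Your proof is correct and follows exactly the approach the paper indicates: the paper states only that the result follows ``by a simple change of variables,'' and your proposal carries out precisely that computation in detail.
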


In this paper, we work exclusively with random vectors having finite second moments.  In this context, we note that the inequality $-\log(z) \geq 1-z$ on $z\in [0,+\infty)$ yields
$$
0 \leq \int_E f_X(x) \log \frac{f_X(x)}{\phi(x)} dx,
$$
where $\phi(x):=(2\pi)^{-\dim(E)/2} e^{-|x|^2/2}$ is the standard gaussian density on $E$.  We may conclude:
\begin{proposition}[Entropy of random vectors with bounded second moments]\label{app:ExistEntropy}
Let $X$ have density on $E$.  If $\EE|X|^2 <\infty$, then the entropy $h(X)$ exists, and is bounded from above as
$$h(X) \leq \frac{1}{2}\log \left((2\pi e)^{\dim(E)}\EE|X|^2\right).$$
\end{proposition}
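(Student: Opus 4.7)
The plan is to turn the Gibbs-type inequality $0\le\int_E f_X\log(f_X/\phi)\,dx$ advertised immediately before the proposition into both the existence assertion and the quantitative bound. The first point is to interpret this integral as the relative entropy $D(f_X\|\phi)$, which is always well-defined as an element of $[0,+\infty]$ for any pair of probability densities, independently of any integrability hypothesis on $f_X\log f_X$. Expanding $-\log\phi(x)=\tfrac{\dim(E)}{2}\log(2\pi)+\tfrac12|x|^2$ and using linearity gives
$$
D(f_X\|\phi)\;=\;-h(X)\;+\;\frac{\dim(E)}{2}\log(2\pi)\;+\;\frac{1}{2}\EE|X|^2\;\ge\;0,
$$
where the rightmost two terms are finite thanks to $\EE|X|^2<\infty$. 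This lower bounds $\int f_X\log f_X\,dx$ by a finite constant, so that integral cannot equal $-\infty$; hence the entropy integral is well-defined in $[-\infty,+\infty)$, in particular $h(X)$ exists. Rearranging the same display also yields the preliminary upper bound
$$
h(X)\;\le\;\frac{\dim(E)}{2}\log(2\pi)\;+\;\frac{1}{2}\EE|X|^2.
$$

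To pass from this preliminary bound to the stated Gaussian-comparison form, I would use the scaling property (Proposition \ref{app:scaling}). For any $\lambda>0$ the vector $Y:=X/\lambda$ has density, finite second moment $\EE|Y|^2=\EE|X|^2/\lambda^2$, and $h(Y)=h(X)-\dim(E)\log\lambda$. Applying the preliminary bound to $Y$ and solving for $h(X)$ gives
$$
h(X)\;\le\;\dim(E)\log\lambda\;+\;\frac{\dim(E)}{2}\log(2\pi)\;+\;\frac{1}{2\lambda^{2}}\EE|X|^2,\qquad \lambda>0.
$$
The right-hand side is minimized at $\lambda^{2}=\EE|X|^2/\dim(E)$, yielding the scale-invariant bound
$$
h(X)\;\le\;\frac{\dim(E)}{2}\log\!\left(\frac{2\pi e\,\EE|X|^2}{\dim(E)}\right),
$$
from which the proposition follows (after absorbing the dimensional factor into the $\log$ if one prefers the form as stated).

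The only delicate point in the whole argument is the hygiene issue at the start: to manipulate $\int f_X\log f_X\,dx$ one needs to know in advance that the integral does not take the form $+\infty-\infty$. Working with $D(f_X\|\phi)$, which is always well-defined in $[0,+\infty]$, neatly bypasses this issue; everything else is a one-line rearrangement followed by a one-variable optimization, both of which are routine.
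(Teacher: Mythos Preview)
Your argument for \emph{existence} is exactly the paper's: nonnegativity of $D(f_X\Vert\phi)$ with $\phi$ the standard gaussian forces $\int f_X\log f_X$ to be bounded below, so $h(X)$ is well-defined in $[-\infty,+\infty)$ and satisfies
\[
h(X)\le \frac{\dim(E)}{2}\log(2\pi)+\frac{1}{2}\,\EE|X|^2 .
\]
The paper stops here; everything after your first display is additional work.

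Your scaling optimization is correct and yields the sharp scale-invariant bound
\[
h(X)\le \frac{\dim(E)}{2}\log\!\Bigl(\frac{2\pi e\,\EE|X|^2}{\dim(E)}\Bigr).
\]
However, your last sentence (``from which the proposition follows, after absorbing the dimensional factor'') is not right, and cannot be: the inequality \emph{as displayed in the proposition} is actually false whenever $\dim(E)\ge 2$. Indeed, both sides are invariant only if they scale the same way under $X\mapsto\lambda X$, but the left side picks up $\dim(E)\log\lambda$ while the right side picks up $\log\lambda$. Concretely, for $X\sim N(0,\sigma^2 I_2)$ with $\sigma^2$ large the displayed bound is violated. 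What the paper's one-line argument really yields is the non-scale-invariant bound above; your optimized version is the correct scale-invariant refinement. The role of the proposition in the body of the paper is only to guarantee existence and $h(X)<+\infty$ under a second-moment hypothesis, and for that purpose any of these upper bounds suffices.
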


If $E_1,E_2$ are Euclidean spaces and $(X,Y)$ is a pair of random vectors taking values in $E_1\times E_2$ with  joint density $f_{XY}: E_1\times E_2 \to  [0,+\infty)$, the \emph{joint entropy} of $(X,Y)$ is defined as 
$$
h(X,Y) = -\iint_{E_1\times E_2}  f_{XY}(x,y)\log f_{XY}(x,y) dx dy.
$$
Of course, this is consistent with the original definition of entropy, applied to  the $(E_1\times E_2)$-valued  random vector $(X,Y)$.    A simple property is as follows:
\begin{proposition}[Subaddivity of entropy]\label{app:Subadditive}
Let $(X,Y)$ be a pair of random vectors taking values in $E_1\times E_2$ with  joint density $f_{XY}: E_1\times E_2 \to  [0,+\infty)$.  If $(X,Y)$ have finite second moments, then the joint and marginal entropies exist and satisfy 
$$
h(X,Y) \leq h(X) + h(Y). 
$$
This is met with equality if $X,Y$ are independent. 
\end{proposition}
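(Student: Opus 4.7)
The plan is to derive subadditivity from the Gibbs inequality, i.e., the non-negativity of the Kullback–Leibler divergence between the joint density $f_{XY}$ and the product of its marginals $f_X(x)f_Y(y)$.

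First I would establish existence. Since $\EE|(X,Y)|^2 = \EE|X|^2 + \EE|Y|^2 < \infty$, both marginal second moments are finite, so Proposition \ref{app:ExistEntropy} applies to $X$, $Y$, and to the pair $(X,Y)$; all three entropies exist as Lebesgue integrals (possibly equal to $-\infty$) and are bounded above by explicit constants depending on the second moments.

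The main analytic step is the tangent-line bound $\phi(z) := z \log z \geq z - 1$ for $z \geq 0$, with the convention $0 \log 0 := 0$. Applying it to $z = f_{XY}(x,y)/(f_X(x) f_Y(y))$ on $\{f_X f_Y > 0\}$ and integrating against the product measure $f_X(x) f_Y(y)\, dx\, dy$, together with the observation that $f_{XY}$ vanishes a.e.\ on $\{f_X f_Y = 0\}$ (as $f_X, f_Y$ are the marginals of $f_{XY}$), yields
$$\iint_{E_1 \times E_2} f_{XY}(x,y) \log \frac{f_{XY}(x,y)}{f_X(x) f_Y(y)}\, dx\, dy \;\geq\; 0.$$
Splitting the logarithm and using Fubini–Tonelli, together with the marginal identities $\int f_{XY}(x,y)\, dy = f_X(x)$ and $\int f_{XY}(x,y)\, dx = f_Y(y)$, identifies the left side with $-h(X,Y) + h(X) + h(Y)$, and rearranging delivers the inequality. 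For equality, note that when $X, Y$ are independent we have $f_{XY} = f_X f_Y$ a.e., so the integrand above vanishes identically and subadditivity is tight.

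The main obstacle is the bookkeeping required when one or more of $h(X), h(Y), h(X,Y)$ equals $-\infty$, since the split $-h(X,Y)+h(X)+h(Y)$ could formally involve cancellations of infinities and the Fubini manipulation requires integrability of the pieces. I would address this by first proving the inequality under the additional assumption that all three entropies are finite, where the decomposition is unambiguous, and then extending by regularization: convolve $X$ and $Y$ with an independent centered Gaussian of small variance $\varepsilon^2$, so that all entropies become finite; apply the finite-entropy case; and finally pass to the limit $\varepsilon \to 0$ using the weak upper semicontinuity of Shannon entropy under a uniform second-moment bound, in combination with the uniform upper bound from Proposition \ref{app:ExistEntropy}. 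An alternative route avoiding regularization is to note that if $h(X)+h(Y) = -\infty$, there is nothing to prove in the extended-real sense, while if both marginal entropies are finite then $f_X \log f_X$ and $f_Y \log f_Y$ are integrable and the splitting is valid without qualification.
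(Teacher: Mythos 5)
Your main argument --- non-negativity of the relative entropy between $f_{XY}$ and the product $f_X\otimes f_Y$ via the tangent-line bound, followed by splitting the logarithm with Fubini--Tonelli --- is exactly the route the paper takes, and it is correct whenever all three entropies are finite. The gap lies in your handling of the degenerate cases, and both of your proposed fixes have problems. For the regularization route: after convolving with a small Gaussian you obtain $h(X_\varepsilon,Y_\varepsilon)\leq h(X_\varepsilon)+h(Y_\varepsilon)$, and Proposition \ref{app:WeakUSC} does give $\limsup_{\varepsilon\to 0} h(X_\varepsilon)\leq h(X)$ and $\limsup_{\varepsilon\to 0} h(Y_\varepsilon)\leq h(Y)$, which controls the right-hand side; but to conclude you also need $h(X,Y)\leq \liminf_{\varepsilon\to 0} h(X_\varepsilon,Y_\varepsilon)$, i.e.\ a \emph{lower}-semicontinuity (or monotonicity under independent Gaussian smoothing) statement for the joint entropy. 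Upper semicontinuity points the wrong way here, and the standard proof that entropy increases under adding independent noise, $h(X+Z)\geq h(X+Z\mid Z)=h(X)$, is itself an instance of the subadditivity you are trying to prove, so this route is circular as written. For the alternative route: if $h(X)+h(Y)=-\infty$ there \emph{is} something to prove, namely that $h(X,Y)=-\infty$ as well; an inequality whose right-hand side is $-\infty$ is not vacuous. The reduction that is free is on the \emph{left}-hand side: if $h(X,Y)=-\infty$ the claim is trivial.

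The paper's resolution of exactly this bookkeeping is worth internalizing. Assume $h(X,Y)$ is finite (otherwise the claim is trivial). The Gibbs inequality gives $-\iint f_{XY}\log(f_Xf_Y)\,dx\,dy\geq h(X,Y)>-\infty$, while each of the two cross-terms $-\iint f_{XY}\log f_X$ and $-\iint f_{XY}\log f_Y$ is separately bounded \emph{above} by the same comparison-with-a-Gaussian argument underlying Proposition \ref{app:ExistEntropy}. Two quantities each bounded above whose sum is bounded below must both be finite; Fubini--Tonelli then identifies them with $h(X)$ and $h(Y)$, and linearity of the integral finishes the proof. As a byproduct this shows that finiteness of $h(X,Y)$ forces finiteness of both marginal entropies, which is precisely the fact your case analysis implicitly assumed without proof.
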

\begin{proof}
Since all entropies exist and are bounded from above, we may assume $h(X,Y)$ is finite, else the claim is trivial.  Similar to before, we note the following inequality:
$$
0 \leq \iint_{E_1\times E_2}  f_{XY}(x,y)\log \frac{f_{XY}(x,y)}{f_X(x) f_Y(y)} dx dy.
$$
Using existence and finiteness of the joint entropy, we apply linearity of the integral to conclude 
\begin{align}
-\iint_{E_1\times E_2}  f_{XY}(x,y)\log \left(  {f_{X}(x)}   {f_{Y}(y)} \right) dx dy\geq h(X,Y). \label{prelimSubadditive}
\end{align}
Now, let us decompose the density
$$
f_{XY}(x,y) = f_X(x) f_{Y|X}(y|x),~~~~x\in E_1, y\in E_2,  
$$
where, for $x\in E_1$, the function $f_{Y|X}(\cdot | x)$ denotes the density of a conditional regular probability $P_{Y|X=x}$ on $E_2$, and coincides with the ratio $f_{XY}(x,y)/f_X(x)$ for almost every $x$ in the support of $f_X$.   Similar to how we argued entropy was bounded from above under a second moment constraint, we may show that the integral
$$
-\iint_{E_1\times E_2}  f_{XY}(x,y)\log \left(  {f_{X}(x)}  \right) dx dy  
$$
exists and is bounded from above.  The same conclusion holds with $f_X(x)$ replaced by $f_Y(y)$.   From this and \eqref{prelimSubadditive}, we conclude using the assumed finiteness of $h(X,Y)$ that the function $(x,y) \longmapsto f_{XY}(x,y)\log \left(  {f_{X}(x)}  \right)$ is integrable, and is equal to $-h(X)$ by the Fubini-Tonelli theorem.  Similar for $h(Y)$, so linearity of the integral applied to \eqref{prelimSubadditive} proves the claim.  
\end{proof}

Finally, we note the following consequence of lower semicontinuity of relative entropy, which follows from the Donsker-Varadhan variational formula.  See, e.g., \cite[Lemma A2]{liu2018forward} for details.
\begin{proposition}[Upper semicontinuity of entropy under second moment constraint]\label{app:WeakUSC}
Let $(X_n)_{n\geq 1}$ be a sequence of random vectors on a Euclidean space $E$, such that $X_n \to X$ weakly.  If $\sup_{n\geq 1} \EE|X_n|^2<+\infty$, then 
$$
\limsup_{n\to\infty} h(X_n) \leq h(X). 
$$
\end{proposition}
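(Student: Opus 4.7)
The plan is to deduce upper semicontinuity of $h$ from lower semicontinuity of relative entropy against a Gaussian reference, using the Donsker--Varadhan variational formula as the source of lower semicontinuity and then sending the Gaussian scale parameter to infinity to neutralize an error term controlled by second moments.

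First, fix $\sigma>0$ and let $\gamma_\sigma$ denote the centered Gaussian measure on $E$ with covariance $\sigma^2\operatorname{id}_E$, whose density is $\phi_\sigma(x)=(2\pi\sigma^2)^{-\dim(E)/2}\exp(-|x|^2/(2\sigma^2))$. A direct computation using the definition of relative entropy yields the identity
$$h(Y)=-D(P_Y\|\gamma_\sigma)+\frac{\dim(E)}{2}\log(2\pi\sigma^2)+\frac{\EE|Y|^2}{2\sigma^2}$$
for any $E$-valued random vector $Y$ with finite second moment that possesses a Lebesgue density. If $P_Y$ is not absolutely continuous with respect to Lebesgue measure, then it is not absolutely continuous with respect to $\gamma_\sigma$ either (since $\gamma_\sigma$ and Lebesgue measure share the same null sets), so $D(P_Y\|\gamma_\sigma)=+\infty$ and the identity continues to hold in the extended sense $h(Y)=-\infty$.

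Second, I will invoke the Donsker--Varadhan variational formula
$$D(\mu\|\gamma_\sigma)=\sup_{f\in C_b(E)}\left\{\int f\,d\mu-\log\int e^f\,d\gamma_\sigma\right\},$$
which expresses $\mu\mapsto D(\mu\|\gamma_\sigma)$ as the pointwise supremum of a family of functionals that are continuous (indeed affine) in $\mu$ with respect to weak convergence. Hence $\mu\mapsto D(\mu\|\gamma_\sigma)$ is weakly lower semicontinuous, and from $X_n\to X$ weakly I obtain $D(P_X\|\gamma_\sigma)\leq\liminf_{n\to\infty}D(P_{X_n}\|\gamma_\sigma)$.

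Combining these two steps and rearranging gives
$$\limsup_{n\to\infty}h(X_n)\leq h(X)+\frac{1}{2\sigma^2}\left(\limsup_{n\to\infty}\EE|X_n|^2-\EE|X|^2\right).$$
Under the hypothesis $M:=\sup_n\EE|X_n|^2<\infty$, the bracketed correction is bounded by $M$, so the error term is at most $M/(2\sigma^2)$. Sending $\sigma\to\infty$ closes the argument. The main obstacle to flag is ensuring that all quantities and inequalities remain meaningful when $X$ lacks a density (so $h(X)=-\infty$); this is handled uniformly by the absolute continuity observation in the first step, which makes both sides of the key identity equal to $-\infty$ simultaneously. No other part of the argument requires a density assumption on $X$, since lower semicontinuity of $D(\cdot\|\gamma_\sigma)$ holds on all Borel probability measures.
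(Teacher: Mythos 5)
Your argument is correct and is precisely the route the paper takes: the paper's ``proof'' is a citation of lower semicontinuity of relative entropy via the Donsker--Varadhan variational formula (deferring to \cite[Lemma A2]{liu2018forward}), and you have simply filled in those details with the Gaussian-reference decomposition and the $\sigma\to\infty$ limit. The only point you leave implicit is that $\EE|X|^2\leq\liminf_{n\to\infty}\EE|X_n|^2<+\infty$ by weak convergence and Fatou's lemma, which is needed so that the identity for $h(X)$ does not produce an indeterminate form.
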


\end{document}